\theoremstyle{plain}
\newtheorem{thm}{Theorem}[section]
\newtheorem{cor}[thm]{Corollary}
\newtheorem{lem}[thm]{Lemma}
\newtheorem{prop}[thm]{Proposition}
\newtheorem{conj}[thm]{Conjecture}
\newtheorem{thmintro}{Theorem}
\newtheorem{conjintro}{Conjecture}
\theoremstyle{definition}
\newtheorem{rem}[thm]{Remark}
\theoremstyle{definition}
\newtheorem{defn}[thm]{Definition}
\def\makeautorefname#1#2{\expandafter\def\csname#1autorefname\endcsname{#2}}
\DeclareMathOperator{\id}{Id}
\DeclareMathOperator{\End}{End}
\DeclareMathOperator{\Ind}{Ind}
\DeclareMathOperator{\Res}{Res}
\DeclareMathOperator{\GL}{GL}
\DeclareMathOperator{\im}{Im}
\DeclareMathOperator{\Irr}{Irr}
\DeclareMathOperator{\Ref}{Ref}
\DeclareMathOperator{\reg}{reg}
\DeclareMathOperator{\Gau}{Gau}
\DeclareMathOperator{\cont}{cont}
\definecolor{myblue}{rgb}{0,.5,1}
\definecolor{mygreen}{rgb}{.3,.75,.1}
\title[Calogero-Moser vs. Leclerc-Miyachi]{On a conjecture about cellular characters for the complex reflection group $G(d,1,n)$}
\author{Abel Lacabanne}
\address{Institut de Recherche en Math\'ematique et Physique\\
Universit\'e Catholique de Louvain\\ 
Chemin du Cyclotron 2\\ 
1348 Louvain-la-Neuve\\ 
Belgium}
\thanks{The author was supported by the Fonds de la Recherche Scientifique - FNRS under Grant no.~MIS-F.4536.19.}
\email{abel.lacabanne@uclouvain.be}
\begin{document}

%%%%%%%%%%%%%%%%%%%%%%%%%%%%%%%%%%%%
%												%
%	Abstract										%
%												%
%%%%%%%%%%%%%%%%%%%%%%%%%%%%%%%%%%%%

\begin{abstract}
  We propose a conjecture relating two different sets of characters for the complex reflection group $G(d,1,n)$. From one side, the characters are afforded by Calogero-Moser cells, a conjectural generalisation of Kazhdan-Lusztig cells for a complex reflection group. From the other side, the characters arise from a level $d$ irreducible integrable representations of $\mathcal{U}_q(\mathfrak{sl}_{\infty})$. We prove this conjecture in some cases: in full generality for $G(d,1,2)$ and for generic parameters for $G(d,1,n)$.
\end{abstract}

%%%%%%%%%%%%%%%%	End of file	%%%%%%%%%%%%%

%%% Local Variables:
%%% mode: latex
%%% TeX-master: "../constructibles_d12"
%%% End:

\maketitle
%\begin{center}
%\textcolor{red}{\textsc{\textbf{Preliminary Version}}}: %\\[2ex]
%v.0 -\today \ \  (\currenttime) \\ \bigskip 
%\end{center}

%%%%%%%%%%%%%%%%%%%%%%
%       Intro        %
%%%%%%%%%%%%%%%%%%%%%%

%%%%%%%%%%%%%%%%%%%%%%%%%%%%%%%%%%%%%%%%%%%%%%%
%                 		              %
%               Introduction                  %
%                 			      %
%%%%%%%%%%%%%%%%%%%%%%%%%%%%%%%%%%%%%%%%%%%%%%%

Using Cherednik algebras and Calogero-Moser spaces, Bonnafé and Rouquier developed in \cite{bonnafe-rouquier} the notions of cells (right, left or two-sided) and of cellular characters of a complex reflection group $W$. Whenever this group is a Coxeter group, they conjectured in \cite[Chapter 15]{bonnafe-rouquier} that these notions coincide with the corresponding notions in the Kazhdan-Lusztig theory. As for Hecke algebras with unequal parameters, these notions heavily depend on some parameter $\mathbf{c}$ defined on the reflections of $W$ and invariant by conjugation. In this paper, we are mainly interested in the notion of cellular characters for the complex reflection group $G(d,1,n)$. If $d=1$, the group $G(d,1,n)$ is nothing else than the Weyl group of type $A_{n-1}$, and if $d=2$, we recover the Weyl group of type $B_n$. Bonnafé and Rouquier showed that if the Calogero-Moser space with parameter $\mathbf{c}$ associated to $W$ is smooth then the cellular characters are irreducible. This implies that their notion of Calogero-Moser cellular characters coincides with the notion of Kazhdan-Lusztig cellular characters in type $A$. Even in type $B$, we only have a complete description for $B_2$ \cite[Chapter 19]{bonnafe-rouquier}. For the dihedral group $G(d,d,2)$, a description of Calogero-Moser families and of Calogero-Moser cellular characters has been given by Bonnafé \cite{bonnafe-dihedral}, and these are compatible with Kazhdan-Lusztig theory.

Lusztig defined in \cite[Chapter 22]{lusztig-unequal} a notion of constructible characters of a Coxeter group, using the so-called truncated induction. He conjectures that these constructible characters are exactly the characters carried by the Kazhdan-Lusztig left cells, and proved the result in the equal parameter case. These characters surprisingly appeared in the work of Leclerc and Miyachi \cite{leclerc-miyachi}. They obtained a closed formula for canonical bases of a level $2$ irreducible integrable representation $V(\Lambda_{r_1}+\Lambda_{r_2})$ of $\mathcal{U}_q(\mathfrak{sl}_{\infty})$ (here the $\Lambda_{i}$ are the fundamental weights). By evaluating these expressions at $q=1$, Leclerc and Miyachi retrieved Lusztig's constructible characters for Weyl groups of type $B$ and $D$. With a level $d$ irreducible integral representation $V\left(\sum_{i=1}^d\Lambda_{r_i}\right)$, they defined some characters of the complex reflection group $G(d,1,n)$ in a similar manner and asked whether these characters are a good analogue of constructible characters in type $B$.

Thus, we have two sets of characters for the complex reflection group $G(d,1,n)$, namely the Calogero-Moser cellular characters and the constructible characters of Leclerc and Miyachi. Both sets of characters depend heavily on some parameters ($\mathbf{c}$ or $\mathbf{r}=(r_1,\ldots,r_d)$), and up to a suitable change of parameters, we conjecture that these two sets of characters are equal.

\begin{conjintro}[Conjecture \ref{conj:cm-lm}]
  \label{conj:conj-intro}
  Let $\mathbf{r}$ be a $d$-tuple of integers. Then there exists an explicit choice of parameter $\mathbf{c}$ for the complex reflection group $G(d,1,n)$ such that the set of Calogero-Moser $\mathbf{c}$-cellular characters and the set of Leclerc-Miyachi $\mathbf{r}$-constructible characters coincide.
\end{conjintro}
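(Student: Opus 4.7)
The plan is to attack the conjecture in two complementary regimes, mirroring the two partial results announced in the abstract. First, I would make the proposed dictionary $\mathbf{r} \leftrightarrow \mathbf{c}$ completely explicit, writing both parameter spaces in terms of the multiplicities along the conjugacy classes of reflections of $G(d,1,n)$ (the single class of order-two reflections, together with the $d-1$ classes of diagonal reflections, dually to the $d$ fundamental weights appearing in the level $d$ representation $V(\sum_i \Lambda_{r_i})$). Both families of characters are naturally indexed by $d$-multipartitions of $n$ and depend on the parameter only through a hyperplane arrangement (equalities or congruences among the $r_i$ on one side, walls of the Calogero--Moser chamber structure on the other), so the first concrete task is to verify that the stratification of the parameter space into chambers of constancy of the cellular partition matches on both sides under this identification.

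For generic parameters, I would exploit the fact, recalled in the introduction, that whenever the Calogero--Moser space is smooth the cellular characters are irreducible; smoothness holds on a Zariski-dense open subset of parameter space, so for generic $\mathbf{c}$ the set of Calogero--Moser cellular characters is simply $\Irr(G(d,1,n))$. Dually, on the Leclerc--Miyachi side, for generic $\mathbf{r}$ (meaning the weights $\Lambda_{r_i}$ are placed so far apart that no interaction occurs in the weight spaces of interest), the level $d$ representation $V(\sum_i \Lambda_{r_i})$ realises, in those weight spaces, as the tensor product $V(\Lambda_{r_1}) \otimes \cdots \otimes V(\Lambda_{r_d})$, and each canonical basis element specialised at $q=1$ is an irreducible multi-partition character. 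Hence both sets coincide with $\Irr(G(d,1,n))$ and the conjecture is trivial in this regime.

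For $G(d,1,2)$ a direct case-by-case analysis is feasible, since the irreducible characters are indexed by the small, explicit set of $d$-multipartitions of $2$. On the Calogero--Moser side, the existing rank-two descriptions, notably the $B_2$ computation in \cite[Chapter 19]{bonnafe-rouquier} and Bonnafé's treatment of $G(d,d,2)$ in \cite{bonnafe-dihedral}, provide the template: one enumerates cellular characters wall by wall in the $\mathbf{c}$-parameter space. On the Leclerc--Miyachi side the relevant weight spaces are low-dimensional and the canonical basis admits an entirely explicit description; specialising at $q=1$ and grouping according to the coincidence patterns among $r_1,\ldots,r_d$ produces tables which I would then match with the Calogero--Moser tables after the explicit change of parameters.

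The main obstacle, and the reason why the general statement remains conjectural, is the non-generic, higher-rank case: when several $r_i$ coincide (or are congruent modulo a small integer), the Calogero--Moser space acquires singularities, non-irreducible cellular characters appear in packets, and simultaneously the Leclerc--Miyachi characters degenerate because distinct canonical basis vectors start sharing support on the standard basis of the Fock space. A uniform combinatorial description of the Calogero--Moser cellular characters in these strata is not available beyond rank two, and this is where the bulk of the remaining work lies. A promising line of attack is a deformation argument that tracks how cellular characters specialise as $\mathbf{c}$ crosses a wall, paralleled by a semi-continuity statement for canonical basis coefficients as the $\Lambda_{r_i}$ collide; however, making this parallel precise in all chambers of the rank-$n$ parameter space seems to require genuinely new geometric or categorical input.
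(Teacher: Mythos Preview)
The statement is a conjecture, and the paper does not prove it in full; it establishes only the two partial cases announced in Theorem~B (generic parameters for $G(d,1,n)$, and all parameters for $G(d,1,2)$). Your proposal correctly recognises this and lays out a plan aimed at exactly these two cases, together with a candid discussion of the obstacles in the remaining strata. In that sense your outline is accurate and well-aligned with what the paper actually achieves.

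Where your proposed route differs from the paper's is in the technical engine on the Calogero--Moser side. You suggest invoking smoothness of the Calogero--Moser space (the Bonnaf\'e--Rouquier result) to conclude irreducibility of the cellular characters for generic $\mathbf{c}$. The paper instead gives an independent, more hands-on argument: it realises the cellular characters via the Gaudin algebra, degenerates the Gaudin operators along a chain of prime ideals to the commutative Jucys--Murphy subalgebra $\mathbf{JM}_{\mathbf{c}}(d,n)$, and then shows directly that the Jucys--Murphy eigenvalues separate standard $d$-tableaux under the generic hypothesis. This buys a self-contained combinatorial proof and, more importantly, the intermediate statement that Jucys--Murphy cellular characters are always sums of Calogero--Moser cellular characters, which is of independent interest. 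Similarly, for $G(d,1,2)$ the paper does not rely on the $B_2$ or $G(d,d,2)$ templates you cite, but computes the action of the (rescaled) Gaudin operators $\mathcal{D}'_x,\mathcal{D}'_y$ on each irreducible from scratch and diagonalises them over $\mathbb{C}(X,Y)$, which yields the cellular characters directly without reference to Kazhdan--Lusztig theory.

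On the Leclerc--Miyachi side your description is essentially what the paper does: for asymptotic $\mathbf{r}$ it runs the Leclerc--Toffin algorithm to show $b_\Sigma=v_\Sigma$, and for height $2$ it computes the canonical basis explicitly. Your tensor-product heuristic for the asymptotic case is correct in spirit but would need the explicit Leclerc--Toffin computation to be made rigorous, which is precisely what the paper supplies.
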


We refer to the statement in Section \ref{sec:conj} for the precise relation between the parameters $\mathbf{c}$ and $\mathbf{r}$. The main result of this paper is a proof of this conjecture in two different cases.

\begin{thmintro}[Theorem \ref{thm:proof-conj}]
  Conjecture \ref{conj:conj-intro} is true in the following two cases:
  \begin{enumerate}
  \item for $G(d,1,2)$ and any parameters,
  \item for $G(d,1,n)$ and asymptotic parameters.
  \end{enumerate}
\end{thmintro}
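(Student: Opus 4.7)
The plan is to make both sides of the conjectured equality combinatorially explicit in each of the two cases, and then to compare them under the parameter translation between $\mathbf{c}$ and $\mathbf{r}$ described in Section \ref{sec:conj}. For each case I would first fix the bijection between irreducible characters of $G(d,1,n)$ and $d$-multipartitions of $n$, express both families (Calogero-Moser cellular and Leclerc-Miyachi constructible) as multiplicity vectors in this basis, and then match them.

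\textbf{Case (1): $G(d,1,2)$.} Here the group has rank $2$ and order $2d^2$, so explicit computation is feasible on both sides. On the Calogero-Moser side, I would exploit the rank-$2$ framework developed by Bellamy and by Bonnafé (used in the proof for $G(d,d,2)$ in \cite{bonnafe-dihedral}) to compute, for each regime of the parameter $\mathbf{c}$, the Calogero-Moser families and the cellular characters for $G(d,1,2)$. The argument splits according to whether the Calogero-Moser space is smooth (in which case the cellular characters are irreducible by the result of Bonnafé-Rouquier quoted in the introduction) or singular (in which case the cellular characters are obtained from the geometry of the finitely many singular points). On the Leclerc-Miyachi side, the depth-$2$ weight spaces of $V\left(\sum_i\Lambda_{r_i}\right)$ are small enough that one can enumerate canonical basis elements and specialise at $q=1$ by hand. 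A stratum-by-stratum comparison, indexed by the possible coincidences among the $r_i$ modulo $d$, yields the equality.

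\textbf{Case (2): Asymptotic parameters.} In the asymptotic regime, both sides admit uniform descriptions in terms of $d$-multipartitions. For the Calogero-Moser side, results on asymptotic families for $G(d,1,n)$ (following Bonnafé-Rouquier and Gordon-Martino) reduce the computation of cellular characters to induction from an appropriate Young subgroup $\prod_i \mathfrak{S}_{n_i}$ of $G(d,1,n)$, with the summation running over compositions $n_1+\cdots+n_d=n$. For the Leclerc-Miyachi side, the asymptotic regime corresponds to a choice of $\mathbf{r}$ for which the crystal of $V\left(\sum_i\Lambda_{r_i}\right)$ degenerates to a tensor product of crystals of basic representations of $\mathcal{U}_q(\mathfrak{sl}_{\infty})$. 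In this regime, the canonical basis factorises (essentially by a Leclerc-Toffin / Uglov type argument) and specialising at $q=1$ gives products of irreducible characters of symmetric groups. Via the wreath-product structure $G(d,1,n)=\mathbb{Z}/d\mathbb{Z}\wr\mathfrak{S}_n$, these translate to inductions from the same Young subgroup appearing on the Calogero-Moser side, yielding the desired matching.

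\textbf{Main obstacle.} The most delicate part is the precise translation of parameters. The Calogero-Moser datum $\mathbf{c}$ is a conjugation-invariant function on the reflections of $G(d,1,n)$, while the Leclerc-Miyachi datum is a $d$-tuple $(r_1,\ldots,r_d)$ of integers; the explicit formula relating them (recalled from Section \ref{sec:conj}) must be used consistently and, crucially, must be verified to align the stratifications of the parameter spaces on which the two families of characters are constant. Once this alignment is in place, Case (1) reduces to a finite case-check and Case (2) reduces to the factorisation of the canonical basis at large level matching the Young-induction decomposition of asymptotic Calogero-Moser cellular characters.
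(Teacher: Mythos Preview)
Your approach for Case~(2) does not match what actually happens, and the description contains a genuine error. In the asymptotic regime (meaning $r_i-r_{i+1}\ge n$ on the Leclerc--Miyachi side, equivalently $(k_p-k_q)-c_0 j\ne 0$ for all $p\ne q$ and $|j|<n$ on the Calogero--Moser side), both families are simply the set of \emph{irreducible} characters of $G(d,1,n)$. On the Calogero--Moser side this is Corollary~\ref{cor:cm-generic}: the Jucys--Murphy elements separate all standard $d$-tableaux, so each cellular character is a single irreducible. On the Leclerc--Miyachi side this is Corollary~\ref{cor:lm-asymptotic}: every $d$-symbol of height $\le n$ is standard and the Leclerc--Toffin algorithm gives $b_\Sigma=v_\Sigma$ without any correction terms, so each constructible character is a single irreducible. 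There is no nontrivial induction from a Young subgroup and no nontrivial factorisation of the canonical basis; your description of both sides as induced characters from $\prod_i\mathfrak{S}_{n_i}$ would produce reducible characters in general and is not what either theory yields here.

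For Case~(1) your outline is closer in spirit, but the paper takes a more elementary route than the geometric one you sketch. Rather than analysing smoothness or singularities of the Calogero--Moser space, the paper writes down the Gaudin operators $\mathcal{D}_x,\mathcal{D}_y$ explicitly as $2\times 2$ matrices on each irreducible $G(d,1,2)$-representation (all of dimension $\le 2$), diagonalises them over $\mathbb{C}(X,Y)$, and reads off the cellular characters directly (Proposition~\ref{prop:cm-cell-d12-c0<>0}). On the Leclerc--Miyachi side the paper runs the Leclerc--Toffin algorithm by hand on all height-$2$ standard $d$-symbols (Proposition~\ref{prop:lm-cell-d12}). Also note a slip: the stratification is by coincidences among the $r_i$ themselves, not ``modulo~$d$''; we are working with $\mathcal{U}_q(\mathfrak{sl}_\infty)$, not an affine algebra, so no reduction modulo~$d$ enters.
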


To support this conjecture, it would be interesting to retrieve some known properties of the Calogero-Moser cellular characters, for example the fact that for any Calogero-Moser cellular character there exists a unique irreducible constituent with minimal $b$-invariant, and its multiplicity is one. This fact is already known for constructible characters of an irreducible finite Coxeter group \cite{bonnafe-b-invariant}.

The paper is organized as follows. In the first Section, we define the first set of characters we are interested in, the Calogero-Moser cellular characters. There are several equivalent definitions of these characters in \cite{bonnafe-rouquier} and we choose to use a definition using the so-called Gaudin algebra, which is a commutative subalgebra of the group algebra of $G(d,1,n)$ over a localization of a polynomial ring. Using this definition, we give another proof of the irreducibility of Calogero-Moser cellular characters for a parameter outside of the essential hyperplanes defined by Chlouveraki. We rely on some results proven in the Appendix. In Section \ref{sec:lm_constructible}, we set up notation and define the second set of characters we are interested in, the Leclerc-Miyachi constructible characters. We show that in the asymptotic situation, these characters are irreducible. In the third section, we compute explicitly the Calogero-Moser cellular and Leclerc-Miyachi constructible characters for the complex reflection group $G(d,1,2)$. On the Calogero-Moser side, we diagonalize the action of the Gaudin algebra on representations of $G(d,1,2)$ and on the Leclerc-Miyachi side, we compute the canonical bases using the algorithm introduced by Leclerc and Toffin in \cite{leclerc-toffin}. Finally, in the last section, we state precisely the conjecture relating these characters, and show that it is valid for $G(d,1,2)$ and any choice of parameters, and for $G(d,1,n)$ for generic parameters.

\subsection*{Acknowledgments}
%The authors would like to thank\dots

The author thanks C. Bonnafé for many fruitful discussions and for his guidance and G. Malle for many valuable comments on an earlier version of this paper. The author was supported by the Fonds de la Recherche Scientifique - FNRS under Grant no.~MIS-F.4536.19. 

%%%%%%%%%%%%%%%%  End of file	%%%%%%%%%%%%%

%%% Local Variables:
%%% mode: latex
%%% TeX-master: "../constructibles_d12"
%%% End:

%%%%%%%%%%%%%%%%%%%%%%
%      Sections      %
%%%%%%%%%%%%%%%%%%%%%%

%%%%%%%%%%%%%%%%%%%%%%%%%%%%%%%%%%%%%%%%%%%%%%%
%                 		              %
%     Calogero-Moser cellular characters      %
%                 			      %
%%%%%%%%%%%%%%%%%%%%%%%%%%%%%%%%%%%%%%%%%%%%%%%

\section{Calogero-Moser cellular characters}
\label{sec:cm_cellular}

We introduce the set of Calogero-Moser $\mathbf{c}$-cellular characters of a complex reflection group, which we define using the notion of Gaudin algebra, see \cite{bonnafe-rouquier}. In the specific case of $G(d,1,n)$, we introduce a commutative subalgebra $\mathbf{JM}_{\mathbf{c}}$ generated by the so-called Jucys-Murphy elements. Using results of the Appendix, we show that the cellular characters of $G(d,1,n)$ for the algebra $\mathbf{JM}_{\mathbf{c}}$ are sums of Calogero-Moser characters. For specific values of $\mathbf{c}$, we show that the cellular characters of $G(d,1,n)$ for the algebra $\mathbf{JM}_{\mathbf{c}}$ are irreducible, then so are the Calogero-Moser $\mathbf{c}$-cellular characters.

\subsection{Notations}

We fix $V$ a finite dimensional $\mathbb{C}$-vector space, denote by $\det\colon\GL(V)\rightarrow \mathbb{C}^*$ the determinant and by $\langle\cdot,\cdot\rangle\colon V\times V^*\rightarrow \mathbb{C}$ the duality between $V$ and the space $V^*$ of linear forms on $V$. We choose for each positive integer $d$ a $d$-th root of unity $\zeta_d$ such that $\zeta_d^{d/l}=\zeta_l$ for all $l$ dividing $d$. The group of $d$-th roots of unity will be denoted by $\mu_d$.

Let $W\subset \GL(V)$ be a finite complex reflection group. We denote by $\Ref(W)$ the set of pseudo-reflections of $W$ and for each $s\in \Ref(W)$, we choose $\alpha_s\in V^*$ and $\alpha_s^{\vee}\in V$ such that
\[
  \ker(s-\id_V) = \ker(\alpha_s)\quad\text{and}\quad\im(s-\id_V) = \mathbb{C}\alpha_s^{\vee}.
\]

We denote by $\mathcal{A}$ the set of reflecting hyperplanes of $W$ as well as by $V^{\mathrm{reg}}$ the open subset $V\setminus \bigcup_{H\in\mathcal{A}}H$. A theorem of Steinberg \cite[Theorem 4.7]{broue} shows that $V^{\reg}$ is the subset of elements of $V$ with trivial stabilizers with respect to the action of $W$.

For $H\in \mathcal{A}$, the pointwise stabilizer $W_H$ of $H$ is a cyclic group of order $e_H$ with a chosen generator $s_H\in\Ref(W)$. If $\Omega\in\mathcal{A}/W$, we denote by $e_\Omega$ the common value of $e_H$ for $H\in\Omega$. With these notations, the set of reflections of $W$ is
\[
  \Ref(W) = \left\{s_H^j\ \middle\vert\  H\in \mathcal{A},1\leq j \leq e_H-1\right\},
\]
and two reflections $s_H^j$ and $s_{H'}^{j'}$ are conjugate if and only if the hyperplanes $H$ and $H'$ are in the same orbit under the action of $W$ and $j=j'$.

We also fix $\mathbf{c}\colon \Ref(W)\rightarrow\mathbb{C},s\mapsto c_s$ a function which is invariant by conjugation. For any $H\in \mathcal{A}$ and $0\leq i \leq e_H-1$, we define
\[
  k_{H,i} = \frac{1}{e_H}\sum_{k=1}^{e_H-1}\zeta_{e_H}^{k(1-i)}c_{s_H^k},
\]
which satisfy $\sum_{i=0}^{e_H}k_{H,i} =0$; we will often consider indices modulo $e_H$ and set $k_{\Omega,i}=k_{H,i}$ for $\Omega\in\mathcal{A}/W$ and any $H\in \Omega$. We recover the function $\mathbf{c}$ via
\[
  c_{s_H^i}=\sum_{j=0}^{e_H-1}\zeta_{e_H}^{i(j-1)}k_{H,j}.
\]

\subsection{Gaudin algebra and Calogero-Moser cellular characters}

For any $y\in V$, we define an element $\mathcal{D}_y$ in the group ring of $W$ with coefficients in $\mathbb{C}[V^{\reg}]$:
\[
  \mathcal{D}_y = \sum_{s\in\Ref(W)}c_s\det(s)\frac{\langle y,\alpha_s\rangle}{\alpha_s}s.
\]
The Gaudin algebra $\Gau_{\mathbf{c}}(W)$ is the sub-$\mathbb{C}[V]$-algebra of $\mathbb{C}[V^{\reg}]W$ generated by $(\mathcal{D}_y)_{y\in V}$.

\begin{prop}[{\cite[13.4.B]{bonnafe-rouquier}}]
  The algebra $\Gau_{\mathbf{c}}(W)$ is commutative.
\end{prop}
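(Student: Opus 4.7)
The natural strategy is to realise $\mathcal{D}_y$ as the non-differential part of the Dunkl operator associated to the parameter $\mathbf{c}$, and to deduce commutativity from the classical flatness $[T_y,T_{y'}]=0$ of the rational Dunkl connection on $\mathbb{C}[V^{\reg}]$. After matching conventions (the inversion formulas between the $c_{s_H^j}$ and the $k_{H,j}$ given in the previous subsection are exactly what one needs), one writes
\[
  T_y = \partial_y - \mathcal{D}_y
\]
as operators on $\mathbb{C}[V^{\reg}]$ and expands
\[
  0 = [T_y,T_{y'}] = [\partial_y,\partial_{y'}] \;-\; [\partial_y,\mathcal{D}_{y'}] \;-\; [\mathcal{D}_y,\partial_{y'}] \;+\; [\mathcal{D}_y,\mathcal{D}_{y'}].
\]

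The first commutator vanishes. The two cross terms combine into $[\partial_y,\mathcal{D}_{y'}]-[\partial_{y'},\mathcal{D}_y]$, and I would show this difference is zero by a direct computation: for each $s\in\Ref(W)$, the Leibniz rule together with $\partial_y\circ s = s\circ \partial_{s^{-1}y}$ yields
\[
  \partial_y\cdot\tfrac{1}{\alpha_s}s \;=\; -\tfrac{\langle y,\alpha_s\rangle}{\alpha_s^{2}}\,s \;+\; \tfrac{1}{\alpha_s}\,s\,\partial_{s^{-1}y}.
\]
Since $s^{-1}y-y$ is a scalar multiple of $\alpha_s^{\vee}$ with coefficient proportional to $\langle y,\alpha_s\rangle$, every contribution to $[\partial_y,\mathcal{D}_{y'}]$ carries a factor $\langle y,\alpha_s\rangle\langle y',\alpha_s\rangle$, manifestly symmetric in $y$ and $y'$. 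The cross terms therefore cancel, and one concludes $[\mathcal{D}_y,\mathcal{D}_{y'}]=0$.

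The main obstacle in this route is purely bookkeeping: verifying, for pseudo-reflections with $e_H>2$, that the non-differential part of the Dunkl operator really is $-\mathcal{D}_y$. This requires expanding the Dunkl sum over $j=1,\dots,e_H-1$, weighting by $k_{H,j}$, and using the inverse Fourier-type formulas relating $k_{\Omega,i}$ and $c_{s_H^i}$ recalled in the Notations subsection. Once this identification is in place the argument above finishes the proof.

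If one wishes to avoid invoking the Dunkl/Cherednik apparatus at all, an alternative is a direct computation inside $\mathbb{C}[V^{\reg}]\rtimes W$: expand $[\mathcal{D}_y,\mathcal{D}_{y'}]$ as a double sum over pairs $(s,t)\in\Ref(W)^2$, group contributions by the image $w=st\in W$, and apply an Arnold-type partial-fraction identity to each term of the form $\tfrac{1}{\alpha_s\cdot s(\alpha_t)}$. The conjugation-invariance of $\mathbf{c}$ together with the symmetry $(s,t)\leftrightarrow(t,s)$ then collapses both the residues and the polynomial part to zero. This is more computational but entirely self-contained.
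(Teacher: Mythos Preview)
The paper does not supply its own proof of this proposition; it simply quotes it from \cite[13.4.B]{bonnafe-rouquier}, so there is nothing to compare against directly. Your argument via Dunkl operators is correct and is precisely the route taken in the cited reference: there the Gaudin operators are introduced as the order-zero part of the Dunkl operators, and their commutativity is read off from $[T_y,T_{y'}]=0$ together with the PBW filtration on the rational Cherednik algebra (which separates the differential-order components cleanly, exactly as your degree-by-degree cancellation does). One small point you leave implicit: having shown $[\mathcal{D}_y,\mathcal{D}_{y'}]$ acts as zero on $\mathbb{C}[V^{\reg}]$, you still need that the action of $\mathbb{C}[V^{\reg}]W$ on $\mathbb{C}[V^{\reg}]$ is faithful to conclude the commutator vanishes in the algebra itself; this is standard (the $w\in W$ are $\mathbb{C}(V)$-linearly independent as operators since $W$ acts freely on $V^{\reg}$), but worth a sentence. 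Your alternative direct computation with Arnold-type partial fractions is also valid and is essentially how one proves $[T_y,T_{y'}]=0$ in the first place, so it is the more self-contained option.
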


Therefore we are in the situation of the Appendix if we set $E=\mathbb{C}W$, $A=\mathbb{C}W$ acting on $E$ by left multiplication, $P=\mathbb{C}[V^{\reg}]$ and $D_i=\mathcal{D}_{y_i}$ acting on $E$ by right multiplication, where $(y_i)_{i}$ is a basis of $V$. The following is Definition \ref{def:cellular} in our setting.

\begin{defn}
  The set of cellular characters for the algebra $\Gau_{\mathbf{c}}(W)$ is called the set of Calogero-Moser $\mathbf{c}$-cellular characters, or for short $\mathbf{c}$-cellular characters. The $\mathbf{c}$-cellular character associated to $L\in\Irr(\mathbb{C}(V)\Gau_{\mathbf{c}}(W))$ is
  \[
    \gamma_L^{\Gau_{\mathbf{c}}(W)}=\sum_{\chi\in\Irr(W)}\left[\Res_{\mathbb{C}(V)\Gau_{\mathbf{c}}(W)}^{\mathbb{C}(V)W}(\mathbb{C}(V)V_\chi)\colon L\right]\chi,
  \]
  where $V_\chi$ is a representation of $W$ affording the character $\chi$ and $[X\colon L]$ is the multiplicity of $L$ in the module $X$.
\end{defn}

\begin{rem}
If $W$ is a Coxeter group and $\mathbf{c}$ has positive values, there is a notion of $\mathbf{c}$-cellular characters arising from the Kazhdan-Lusztig theory of Hecke algebras. It is conjectured by Bonnafé and Rouquier \cite[15.2, Conjecture L]{bonnafe-rouquier} that the set of Kazhdan-Lusztig cellular characters and of Calogero-Moser characters coincide.
\end{rem}

For all $z\in V^{\reg}$, we can specialize $\mathcal{D}_y$ to an element of $\mathbb{C}W$, by evaluating the coefficents in $\mathbb{C}[V^{\reg}]$ at $z$:
\[
  \sum_{s\in\Ref(W)}c_s\det(s)\frac{\langle \alpha_s,y \rangle}{\langle\alpha_s,z \rangle}s\in\mathbb{C}W.
\]
Choosing $z=y$, one obtains a central element of $\mathbb{C}W$, called the Euler element, which does not depend on $y$:
\[
  \mathbf{eu}_{\mathbf{c}}=\sum_{s\in\Ref(W)}c_s\det(s)s.
\]

\begin{lem}
  \label{lem:action_eu}
  Let $\chi\in\Irr(W)$. For $\Omega\in\mathcal{A}/W$ and $H\in\Omega$, we define $m_{\Omega,\chi}^j=\langle\chi_{\vert W_H},\det^{-j}_{\vert W_H}\rangle_{W_H}$, where $\langle\cdot,\cdot\rangle_{W_H}$ denotes the scalar product of characters of $W_H$. The Euler element $\mathbf{eu}_{\mathbf{c}}$ acts on a representation affording the character $\chi$ by multiplication by
  \[
    \sum_{\Omega\in\mathcal{A}/W}\sum_{j=0}^{e_\Omega}\frac{\lvert\Omega\rvert e_\Omega m_{\Omega,\chi}^j}{\chi(1)}k_{\Omega,j}.
  \]
\end{lem}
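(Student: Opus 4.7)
The proof is a direct character-theoretic calculation combined with a Fourier inversion on each cyclic stabilizer $W_H$. Here is how I would organize it.

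\medskip

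\textbf{Step 1: Reduce to a character computation.} Since $\mathbf{eu}_{\mathbf{c}}$ is central in $\mathbb{C}W$, it acts on any representation $V_\chi$ as the scalar
\[
  \frac{1}{\chi(1)}\sum_{s\in\Ref(W)}c_s\det(s)\chi(s).
\]
So the task reduces to showing that this scalar equals the stated expression. Partition $\Ref(W)$ by orbit and power, writing $s = s_H^j$ with $H\in\Omega\in\mathcal{A}/W$ and $1\le j\le e_\Omega-1$. Since $c_{s_H^j}$, $\det(s_H^j)=\zeta_{e_\Omega}^j$, and $\chi(s_H^j)$ depend only on $\Omega$ and $j$, the sum collapses to
\[
  \sum_{\Omega\in\mathcal{A}/W}|\Omega|\sum_{j=1}^{e_\Omega-1}c_{s_H^j}\,\zeta_{e_\Omega}^{j}\,\chi(s_H^j),
\]
for any chosen $H\in\Omega$.

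\medskip

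\textbf{Step 2: Expand $\mathbf{c}$ in the $k$-basis.} Substituting $c_{s_H^j}=\sum_{i=0}^{e_\Omega-1}\zeta_{e_\Omega}^{j(i-1)}k_{\Omega,i}$ and simplifying the factor $\zeta_{e_\Omega}^{j(i-1)}\zeta_{e_\Omega}^{j}=\zeta_{e_\Omega}^{ji}$, the expression becomes
\[
  \sum_{\Omega}|\Omega|\sum_{i=0}^{e_\Omega-1}k_{\Omega,i}\sum_{j=1}^{e_\Omega-1}\zeta_{e_\Omega}^{ji}\chi(s_H^j).
\]

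\medskip

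\textbf{Step 3: Recognize $m_{\Omega,\chi}^i$ via character orthogonality on $W_H$.} Since $W_H=\langle s_H\rangle$ is cyclic of order $e_\Omega$ with $\det(s_H)=\zeta_{e_\Omega}$, the scalar product defining $m_{\Omega,\chi}^i$ gives
\[
  m_{\Omega,\chi}^i=\frac{1}{e_\Omega}\sum_{k=0}^{e_\Omega-1}\chi(s_H^k)\,\zeta_{e_\Omega}^{ki},
\]
so $\sum_{j=1}^{e_\Omega-1}\zeta_{e_\Omega}^{ji}\chi(s_H^j)=e_\Omega m_{\Omega,\chi}^i-\chi(1)$. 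Plugging this in, the inner sum splits into a main piece giving the target, and an error term proportional to $\chi(1)\sum_{i}k_{\Omega,i}$.

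\medskip

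\textbf{Step 4: Kill the residual term.} The identity $\sum_{i=0}^{e_\Omega-1}k_{\Omega,i}=0$, which follows from $\sum_{i=0}^{e_\Omega-1}\zeta_{e_\Omega}^{-ki}=0$ for $1\le k\le e_\Omega-1$, eliminates the $\chi(1)$-contribution. Dividing by $\chi(1)$ yields exactly
\[
  \sum_{\Omega\in\mathcal{A}/W}\sum_{j=0}^{e_\Omega-1}\frac{|\Omega|e_\Omega m_{\Omega,\chi}^j}{\chi(1)}k_{\Omega,j}.
\]

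\medskip

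There is no real obstacle; the only thing to watch carefully is the convention $\det(s_H)=\zeta_{e_\Omega}$ implicit in the definition of $k_{H,i}$ (so that the Fourier inversion giving $c_{s_H^i}$ uses the correct root of unity), and the bookkeeping between the two Fourier transforms — one defining $k_{\Omega,i}$ from $\mathbf{c}$, the other defining $m_{\Omega,\chi}^i$ from $\chi|_{W_H}$. Once those conventions are aligned, the computation is a one-line orthogonality argument.
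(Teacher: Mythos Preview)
Your proof is correct; the four steps are exactly the right decomposition, and the only delicate point (matching the convention $\det(s_H)=\zeta_{e_\Omega}$ so that the two Fourier transforms align) is handled properly. The paper itself does not give a proof but simply refers to \cite[Lemma~7.2.1]{bonnafe-rouquier}; your argument is precisely the standard computation one finds there, so there is nothing to compare.
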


\begin{proof}
  See \cite[Lemma 7.2.1]{bonnafe-rouquier}.
\end{proof}

\subsection{The imprimitive reflection group $G(d,1,n)$}

In this subsection, $V$ is of dimension $n$ with a chosen basis $(y_1,\ldots,y_n)$ with dual basis $(x_1,\ldots,x_n)$. Using this choice of basis, we identify $\GL(V)$ to $\GL_n(\mathbb{C})$. We also fix a positive integer $d$ and denote by $\zeta$ the $d$-th root of unity $\zeta_d$.

\subsubsection{The group $G(d,1,n)$ and its reflections}

It is easy to describe the group $G(d,1,n)$ in terms of matrices: it is the subgroup of $\GL(\mathbb{C})$ with elements the monomial matrices with coefficients in $\mu_d$. The permutation matrix corresponding to the transposition $(i\ j)$ will be denoted by $s_{i,j}$ and the diagonal matrix with diagonal entries $(1,\ldots,1,\zeta,1,\ldots,1)$, with $\zeta$ at the $i$-th position will be denoted by $\sigma_i$.

The set of reflections of $G(d,1,n)$ splits into $d$ conjugacy classes:
\[
  \Ref(G(d,1,n))=\bigsqcup_{k=0}^{d-1}\Ref(G(d,1,n))_k,
\]
where $\Ref(G(d,1,n))_0=\left\{\sigma_i^rs_{i,j}\sigma_i^{-r}\ \middle\vert\ 1\leq i < j \leq n, 0 \leq r \leq d-1\right\}$ and $\Ref(G(d,1,n))_k=\left\{\sigma_i^k\ \middle\vert\ 1 \leq i \leq n \right\}$ for $1 \leq k \leq d-1$.

We now give an explicit choice for $\alpha_s$ and $\alpha_s^\vee$ for any reflection $s$. For the reflection $s_{i,j,r}=\sigma_i^rs_{i,j}\sigma_i^{-r}$, the reflecting hyperplane $H_{i,j,r}$ is given by the kernel of the linear form $\alpha_{i,j,r}=x_i-\zeta^rx_j$ and the eigenspace associated to the eigenvalue $-1$ is spanned by $\alpha_{i,j,r}^\vee=\zeta^ry_i-y_j$. For the reflection $\sigma_i^k$, the reflecting hyperplane $H_{i}$ is given by the kernel of the linear form $\alpha_{i}=x_i$ and the eigenspace associated to the eigenvalue $\zeta^k$ is spanned by $\alpha_{i}^\vee=y_i$.

Under the action of $G(d,1,n)$, the set of reflecting hyperplanes $\mathcal{A}$ has only two orbits, $\Omega_0=\left\{H_{i,j,r}\ \middle\vert\ 1\leq i < j \leq n, 0 \leq r < d-1\right\}$ and $\Omega_1=\left\{H_{i}\ \middle\vert\ 1 \leq i \leq n\right\}$ which are of respective cardinal $d\frac{n(n-1)}{2}$ and $n$. 

Given a function $\mathbf{c}\colon \Ref(W)\rightarrow \mathbb{C}$ constant on the conjugacy classes, we denote its value on $\Ref(G(d,1,n))_k$ by $c_k$ and will write $k_i$ instead of $k_{\Omega_1,i}$. We will try not to introduce the parameters $k_{\Omega_0,0}$ and $k_{\Omega_0,1}$ which are respectively equal to $-\frac{c_0}{2}$ and $\frac{c_0}{2}$. Finally, the Euler element associated to $G(d,1,n)$ and $\mathbf{c}$ will be denoted by $\mathbf{eu}_{\mathbf{c},n}$.

\subsubsection{Representations and $d$-partitions}

The representation theory of $G(d,1,n)$ is well known and is governed by the $d$-partitions of $n$, see \cite[Section 5.1]{geck-jacon} for example. A partition of $n$ is a finite sequence of integers $\lambda=(\lambda_1,\ldots,\lambda_r)$ adding up to $n$ such that $\lambda_1\geq \lambda_2\geq\cdots\lambda_r >0$, and we set $\lvert \lambda \rvert = n$. A $d$-partition of $n$ is a $d$-tuple $(\lambda^{(1)},\ldots,\lambda^{(d)})$ of partitions such that $\sum_{i=1}^{d}\lvert\lambda^{(i)}\rvert = n$. The isomorphism classes of irreducible complex representations of $G(d,1,n)$ are parameterized by $d$-partitions of $n$, and for such a $d$-partition $\lambda$, we denote by $V_\lambda$ a corresponding representation.

One can describe the branching rule $G(d,1,n)\subset G(d,1,n+1)$ in terms of Young diagrams. The Young diagram $[\lambda]$ of a $d$-partition $\lambda$ of $n$ is the set
\[
  \left\{(a,b,c) \in \mathbb{Z}_{>0}\times\mathbb{Z}_{>0}\times\{1,\ldots,d\}\middle\vert 1 \leq b \leq \lambda_a^{(c)}\right\}, 
\]
whose elements will be called boxes. The content $\cont(\gamma)$ of a box $\gamma=(a,b,c)$ is the integer $b-a$. A box $\gamma$ of $[\lambda]$ is said to be removable if $[\lambda]\setminus\{\gamma\}$ is the Young diagram of a $d$-partition $\mu$ of $n-1$, and in this case, the box $\gamma$ is said to be addable to $\mu$.

\begin{prop}[{\cite[Proposition 5.1.8]{geck-jacon}}]
  Let $\lambda$ be a $d$-partition of $n$. Then
  \[
     \Ind_{G(d,1,n)}^{G(d,1,n+1)}(V_\lambda)=\bigoplus_{\mu} V_\mu
  \]
  where $\mu$ runs over the $d$-partitions of $n+1$ with Young diagram obtained by adding an addable box to the Young diagram of $\lambda$. Concerning the restriction,
   \[
     \Res_{G(d,1,n-1)}^{G(d,1,n)}(V_\lambda)=\bigoplus_{\mu} V_\mu
  \]
  where $\mu$ runs over the $d$-partitions of $n-1$ with Young diagram obtained by removing a removable box from the Young diagram of $\lambda$.
\end{prop}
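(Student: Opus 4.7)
The plan is to prove the restriction statement directly and deduce the induction statement from Frobenius reciprocity. Since $\Ind$ and $\Res$ are adjoint, one has $\langle \Ind_{G(d,1,n)}^{G(d,1,n+1)} V_\lambda, V_\mu\rangle = \langle V_\lambda, \Res_{G(d,1,n)}^{G(d,1,n+1)} V_\mu\rangle$, so the decomposition of the induced module is read off from that of the restricted module (and removing a box from $\mu$ is the reverse of adding a box to $\lambda$). Thus it suffices to establish the restriction rule.

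The starting point is the explicit construction of $V_\lambda$ as an induced representation from a ``Young subgroup''. Write $\lambda=(\lambda^{(1)},\ldots,\lambda^{(d)})$ with $\lvert\lambda^{(i)}\rvert=n_i$, so that $n_1+\cdots+n_d=n$. Using $G(d,1,n)\cong \mu_d\wr S_n$, embed the parabolic subgroup $W_\lambda:=G(d,1,n_1)\times\cdots\times G(d,1,n_d)$ into $G(d,1,n)$ in the standard block-diagonal way. For each $i$, let $\varepsilon_i$ be the one-dimensional character of $G(d,1,n_i)$ that is trivial on $S_{n_i}$ and acts on $\sigma_j$ by $\zeta^{i-1}$, and let $\tilde V_{\lambda^{(i)}}$ be the Specht module $V_{\lambda^{(i)}}$ of $S_{n_i}$ extended to $G(d,1,n_i)$ by tensoring with $\varepsilon_i$. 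A standard Clifford-theoretic argument (irreducibles of a wreath product with an abelian group are induced from stabilizer subgroups of characters of $\mu_d^n$) shows
\[
V_\lambda \;\cong\; \Ind_{W_\lambda}^{G(d,1,n)}\bigl(\tilde V_{\lambda^{(1)}}\boxtimes\cdots\boxtimes\tilde V_{\lambda^{(d)}}\bigr).
\]

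Now apply Mackey's formula to $\Res_{G(d,1,n-1)}^{G(d,1,n)}\circ \Ind_{W_\lambda}^{G(d,1,n)}$. One checks that the double cosets $W_\lambda\backslash G(d,1,n)/G(d,1,n-1)$ are in bijection with the set of indices $i\in\{1,\ldots,d\}$ for which $n_i\geq 1$: indeed, the ``missing'' $n$-th coordinate can be absorbed into exactly one of the $d$ diagonal blocks, and the choice of representative within that block is immaterial. For each such $i$, the corresponding intersection is $W_{\lambda}\cap G(d,1,n-1)=G(d,1,n_1)\times\cdots\times G(d,1,n_i-1)\times\cdots\times G(d,1,n_d)$, and the twisting characters $\varepsilon_j$ restrict coherently. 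Invoking the classical branching rule for the symmetric group in the $i$-th factor,
\[
\Res_{S_{n_i-1}}^{S_{n_i}} V_{\lambda^{(i)}} \;=\; \bigoplus_{\nu}V_\nu,
\]
where $\nu$ is obtained from $\lambda^{(i)}$ by removing a removable box, and reconstituting the induced module over the smaller Young subgroup yields exactly $V_{\lambda^\flat}$ where $\lambda^\flat$ is the $d$-partition obtained from $\lambda$ by replacing $\lambda^{(i)}$ by $\nu$. Summing over $i$ and $\nu$ recovers the claim.

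The main obstacle is really bookkeeping in the Mackey step: one has to verify that the double-coset representatives act trivially (up to the chosen characters $\varepsilon_i$) on the relevant factors, so that the Clifford-theoretic extension is preserved, and that no multiplicities are created when reassembling the pieces. Once this is done carefully, the branching rule reduces cleanly to the symmetric group case. As a sanity check, the dimension count $\dim V_\lambda=\binom{n}{n_1,\ldots,n_d}\prod_i \dim V_{\lambda^{(i)}}$ together with the classical hook-length identities matches the sum over removable boxes, which provides a useful consistency check before writing the proof out in detail.
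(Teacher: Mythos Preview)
The paper does not give its own proof of this proposition: it is quoted verbatim with a citation to \cite[Proposition 5.1.8]{geck-jacon} and used as a black box, so there is no ``paper's approach'' to compare against.

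Your sketch is the standard and correct route. The reduction of induction to restriction via Frobenius reciprocity is immediate, and the Clifford-theoretic model
\[
V_\lambda \cong \Ind_{W_\lambda}^{G(d,1,n)}\bigl(\tilde V_{\lambda^{(1)}}\boxtimes\cdots\boxtimes\tilde V_{\lambda^{(d)}}\bigr)
\]
is exactly how the irreducibles of $\mu_d\wr S_n$ are built. The Mackey step is right as stated: the coset space $G(d,1,n)/G(d,1,n-1)$ has size $dn$ (a position $j$ together with a $\mu_d$-twist), $W_\lambda$ acts transitively on positions within each block and absorbs the $\mu_d$-twist via its diagonal part, so the double cosets are indeed indexed by the blocks $i$ with $n_i\geq 1$. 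After conjugating by the chosen representative, the stabilizer is $G(d,1,n_1)\times\cdots\times G(d,1,n_i-1)\times\cdots\times G(d,1,n_d)$, the characters $\varepsilon_j$ restrict as they should, and one is left with the classical $S_{n_i}\downarrow S_{n_i-1}$ branching in the $i$-th slot. Reassembling via induction in stages gives exactly the $V_{\lambda^\flat}$ with one box removed from the $i$-th component, each with multiplicity one. Nothing is missing; the ``bookkeeping'' you flag is genuinely the only thing to check, and it goes through without surprises.
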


Using this branching rule, we define a basis of $V_{\lambda}$ in terms of standard $d$-tableaux of shape $\lambda$, which are bijections $\mathfrak{t}\colon[\lambda]\rightarrow\{1,\ldots,n\}$ such that for all boxes $\gamma=(a,b,c)$ and $\gamma'=(a',b',c)$ we have $\mathfrak{t}(\gamma)<\mathfrak{t}(\gamma')$ if $a=a'$ and $b<b'$ or $a<a'$ and $b=b'$. Giving a standard $d$-tableau is then equivalent to giving a sequence of $d$-partitions $(\lambda^{\mathfrak{t}}[i])_{1\leq i \leq n}$ such that $[\lambda^{\mathfrak{t}}[i]]=\mathfrak{t}^{-1}(\{1,\ldots,i\})$. Therefore $V_\lambda$ is the direct sum of one dimensional spaces $D_{\mathfrak{t}}$, where for all $1\leq i \leq n$ the space $D_{\mathfrak{t}}$ is in the irreducible component $V_{\lambda^{\mathfrak{t}}[i]}$ of $\Res_{G(d,1,i)}^{G(d,1,n)}(V_\lambda)$.

\subsubsection{A commutative subalgebra of $\mathbb{C}G(d,1,n)$}

For $1\leq k \leq n$, we define the following elements of $\mathbb{C}G(d,1,n)$:
\[
  J_{k}=\mathbf{eu}_{\mathbf{c},k}-\mathbf{eu}_{\mathbf{c},k-1} = \sum_{\substack{s\in\Ref(G(d,1,k))\\s\not\in\Ref(G(d,1,k-1))}}c_s\det(s)s.
\]
If $d=1$, these elements are the usual Jucys-Murphy elements for the symmetric group $\mathfrak{S}_k$, multiplied by the scalar $c_0$.

\begin{lem}
  For all $1\leq i,j \leq n$, the Jucys-Murphy elements $J_i$ and $J_j$ commute and $J_{i+1}=s_{i,i+1,0}J_is_{i,i+1,0}-c_0\sum_{r=0}^{d-1}s_{i,i+1,r}$.
\end{lem}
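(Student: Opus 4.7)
The plan is to handle the two assertions separately, using the centrality of the Euler elements in their respective group algebras for commutativity, and an explicit conjugation calculation for the recursion.

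For commutativity, I would exploit that $\mathbf{eu}_{\mathbf{c},k}$, being a class function on $G(d,1,k)$, lies in the centre of $\mathbb{C}G(d,1,k)$. Without loss of generality assume $i\leq j$. Then $J_i=\mathbf{eu}_{\mathbf{c},i}-\mathbf{eu}_{\mathbf{c},i-1}$ lies in $\mathbb{C}G(d,1,i)\subseteq\mathbb{C}G(d,1,j-1)$, so it commutes with $\mathbf{eu}_{\mathbf{c},j-1}$; and it certainly commutes with $\mathbf{eu}_{\mathbf{c},j}$, which is central in the larger algebra $\mathbb{C}G(d,1,j)$. Subtracting gives $[J_i,J_j]=0$. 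This is essentially the same argument as for the classical Jucys-Murphy elements in the symmetric group.

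For the recursion, I would write $J_{i+1}$ explicitly by enumerating the reflections of $G(d,1,i+1)$ that do \emph{not} already lie in $G(d,1,i)$: these are precisely the $s_{j,i+1,r}$ for $1\leq j\leq i$, $0\leq r\leq d-1$ (each with $\det=-1$ and coefficient $c_0$), together with the powers $\sigma_{i+1}^k$ for $1\leq k\leq d-1$ (with $\det=\zeta^k$ and coefficient $c_k$). This gives
\[
J_{i+1}=-c_0\sum_{j=1}^{i}\sum_{r=0}^{d-1}s_{j,i+1,r}+\sum_{k=1}^{d-1}c_k\zeta^k\sigma_{i+1}^k,
\]
and analogously for $J_i$ with the index $i+1$ replaced by $i$ and the range $1\leq j\leq i-1$. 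The main calculation is then to conjugate $J_i$ by $\tau=s_{i,i+1,0}$, which acts on $V$ by swapping $y_i\leftrightarrow y_{i+1}$. Hence for $j<i$ one gets $\tau s_{j,i,r}\tau^{-1}=s_{j,i+1,r}$ (same twist $r$, since $\tau$ is untwisted), and $\tau\sigma_i^k\tau^{-1}=\sigma_{i+1}^k$. Substituting yields the expression for $\tau J_i\tau^{-1}$, which agrees with $J_{i+1}$ except for the missing $j=i$ row; the difference is exactly $-c_0\sum_{r=0}^{d-1}s_{i,i+1,r}$, establishing the claimed identity.

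The only real subtlety is the conjugation step: one must check that swapping the $i$-th and $(i+1)$-th coordinates sends the hyperplane $\ker(x_j-\zeta^rx_i)$ to $\ker(x_j-\zeta^rx_{i+1})$ (keeping the same $r$), rather than producing some $\zeta^{-r}$ or an identification with $s_{j,i+1,-r}$. Tracking this carefully, and being precise about the chosen parameterization $s_{j,i,r}=\sigma_j^rs_{j,i}\sigma_j^{-r}$ (so that $\tau$, which fixes the index $j$, conjugates cleanly), is where most of the care is required; everything else is bookkeeping.
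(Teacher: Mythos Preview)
Your proposal is correct and follows essentially the same approach as the paper: both arguments use the centrality of the Euler elements $\mathbf{eu}_{\mathbf{c},k}$ in $\mathbb{C}G(d,1,k)$ to deduce commutativity, and both establish the recursion by writing $J_i$ explicitly and conjugating term by term via $\tau s_{p,i,r}\tau^{-1}=s_{p,i+1,r}$ and $\tau\sigma_i^k\tau^{-1}=\sigma_{i+1}^k$. The only cosmetic difference is that the paper proves the recursion first and commutativity second, and phrases the latter as ``$J_i$ commutes with all of $\mathbb{C}G(d,1,i-1)$'' rather than splitting $J_j$ into its two Euler summands; one small imprecision in your write-up is that $\mathbb{C}G(d,1,i)\subseteq\mathbb{C}G(d,1,j-1)$ needs $i<j$, not merely $i\leq j$, but the diagonal case is trivial.
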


\begin{proof}
  It is immediate to check that the Jucys-Murphy element $J_i$ is equal to
  \[
    J_i = -c_0\sum_{1\leq p < i}\sum_{r=0}^{d-1}s_{p,i,r} + \sum_{r=1}^{d-1}c_r\zeta^r\sigma_i^r.
  \]
  Since the conjugate of $s_{p,i,r}$ by $s_{i,i+1,0}$ is $s_{p,i+1,r}$ and the conjugate of $\sigma_i^r$ by $s_{i,i+1,0}$ is $\sigma_{i+1}^r$, we have that
  \[
    s_{i,i+1,0}J_is_{i,i+1,0}^{-1} = -c_0\sum_{1\leq p < i}\sum_{r=0}^{d-1}s_{p,i+1,r} + \sum_{r=1}^{d-1}c_r\zeta^r\sigma_{i+1}^r,
  \]
  and we obtain the induction formula.

  Since $\mathbf{eu}_{\mathbf{c},i}$ commutes with every element of $\mathbb{C}G(d,1,i)$, one see that $J_i=\mathbf{eu}_{\mathbf{c},i}-\mathbf{eu}_{\mathbf{c},i-1}$ commutes with every element of $\mathbb{C}G(d,1,i-1)$, and therefore with $J_1,\ldots,J_{i-1}$.
\end{proof}

The commutative subalgebra of $\mathbb{C}G(d,1,n)$ generated by the elements $J_1,\ldots,J_n$ is denoted by $\mathbf{JM}_{\mathbf{c}}(d,n)$. On the representation $V_\lambda$, the action of the Jucys-Murphy elements is simultaneously diagonalizable, and one can easily compute the eigenvalues using Lemma \ref{lem:action_eu}.

\begin{prop}
  Let $\lambda$ be a $d$-partition of $n$ and $\gamma=(a,b,c)$ a removable box of $[\lambda]$. Then $J_n$ acts on the component $V_{\lambda\setminus\{\gamma\}}$ of $\Res_{G(d,1,n-1)}^{G(d,1,n)}(V_\lambda)$ by multiplication by the scalar
  \[
    d(k_{1-c}-c_0(b-a)).
  \]
\end{prop}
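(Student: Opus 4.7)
The basic observation is that $J_n = \mathbf{eu}_{\mathbf{c},n} - \mathbf{eu}_{\mathbf{c},n-1}$, and each Euler element $\mathbf{eu}_{\mathbf{c},i}$ lies in the center of $\mathbb{C}G(d,1,i)$. Consequently $\mathbf{eu}_{\mathbf{c},n}$ acts on $V_\lambda$ by a scalar $e(\lambda)$, while $\mathbf{eu}_{\mathbf{c},n-1}$ acts on the summand $V_{\lambda \setminus \{\gamma\}}$ of $\Res^{G(d,1,n)}_{G(d,1,n-1)}(V_\lambda)$ by the scalar $e(\lambda \setminus \{\gamma\})$, so $J_n$ acts on this summand by $e(\lambda) - e(\lambda \setminus \{\gamma\})$. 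The plan is therefore to prove the stronger additive statement
\[
  e(\lambda) = \sum_{(a,b,c) \in [\lambda]} d\bigl(k_{1-c} - c_0(b-a)\bigr),
\]
from which the proposition follows immediately by telescoping: removing the box $\gamma = (a,b,c)$ removes exactly one term from the sum.

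To establish this additive formula, I would apply Lemma \ref{lem:action_eu} and split the contribution according to the two orbits $\Omega_0$ and $\Omega_1$. For $\Omega_0$ we have $e_{\Omega_0} = 2$, $|\Omega_0| = dn(n-1)/2$ and $(k_{\Omega_0,0}, k_{\Omega_0,1}) = (-c_0/2, c_0/2)$, so after simplification the $\Omega_0$ contribution becomes a constant multiple of $c_0$ times the character-theoretic sum of transpositions. A direct adaptation of the classical symmetric-group Jucys--Murphy argument, with the $d$-fold covering coming from the $d$ choices of $r$ in $s_{i,j,r}$, gives that this contribution equals $-dc_0 \sum_{(a,b,c) \in [\lambda]}(b-a)$. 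For $\Omega_1$ one has $e_{\Omega_1} = d$ and $|\Omega_1| = n$, and the multiplicity $m_{\Omega_1, \chi_\lambda}^j$ equals the dimension of the $\zeta^{-j}$-eigenspace of $\sigma_i$ acting on $V_\lambda$. Using the explicit basis $(D_\mathfrak{t})_\mathfrak{t}$ indexed by standard $d$-tableaux, where $\sigma_i$ acts on $D_\mathfrak{t}$ by a root of unity determined solely by the component index $c$ of the box $\mathfrak{t}^{-1}(i)$, this eigenspace dimension is expressible as $\chi_\lambda(1)/n$ times the number of boxes in a prescribed component of $[\lambda]$. Plugging into the formula of Lemma \ref{lem:action_eu} and summing over $j$ turns the $\Omega_1$ contribution into $d \sum_{(a,b,c) \in [\lambda]} k_{1-c}$.

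The main obstacle is bookkeeping with the conventions: one must verify that $\sigma_i$ acts on $D_\mathfrak{t}$ precisely by $\zeta^{c-1}$ (with $\mathfrak{t}^{-1}(i) = (a,b,c)$) so that the reindexing matches the shift $j \mapsto 1-c$ appearing in $k_{1-c}$, and that the relation $\sum_{j=0}^{e_\Omega - 1} k_{\Omega,j} = 0$ is used correctly to eliminate the redundant parameter. Once these conventions are in place, the two orbit contributions combine to yield the desired additive formula for $e(\lambda)$, and subtracting $e(\lambda \setminus \{\gamma\})$ leaves only the single-box term $d(k_{1-c} - c_0(b-a))$.
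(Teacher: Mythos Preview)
Your proposal is correct and follows essentially the same route as the paper: both compute the scalar $\omega_\lambda(\mathbf{eu}_{\mathbf{c},n})$ via Lemma~\ref{lem:action_eu}, splitting into the two orbits $\Omega_0$ and $\Omega_1$, arrive at the additive formula $\omega_\lambda(\mathbf{eu}_{\mathbf{c},n}) = d\sum_j k_{-j}\lvert\lambda^{(j+1)}\rvert - dc_0\sum_{\gamma\in[\lambda]}\cont(\gamma)$, and then telescope using $J_n=\mathbf{eu}_{\mathbf{c},n}-\mathbf{eu}_{\mathbf{c},n-1}$. The only difference is that the paper imports the needed multiplicities $m_{\Omega,\chi_\lambda}^j$ from \cite[Lemma~6.1]{rouquier-schur}, whereas you sketch how to compute them directly from the tableau basis and the classical content formula.
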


\begin{proof}
  We start by computing the action of the Euler element on $V_{\lambda}$, and therefore we need the values of the integers $m_{\Omega,j}^{\chi_{\lambda}}$, for $\Omega\in\mathcal{A}/W$ and $0 \leq j \leq e_{e_\Omega}-1$, where $\chi_\lambda$ is the character of $V_\lambda$. From \cite[Lemma 6.1]{rouquier-schur} we have
  \[
    \frac{1}{\chi_\lambda(1)}\langle (\chi_\lambda)_{\vert_{\langle s_0\rangle}},\det\nolimits^j\rangle_{\langle s_0\rangle} = \frac{\lvert \lambda^{(j+1)}\rvert}{n}\quad\text{and}\quad\frac{1}{\chi_\lambda(1)}\langle (\chi_\lambda)_{\vert_{\langle s_{1}\rangle}},\det\rangle_{\langle s_{1}\rangle} = \frac{1}{2}-\frac{1}{n(n-1)}\sum_{\gamma\in[\lambda]}\cont(\gamma).
  \]
  Therefore, using Lemma \ref{lem:action_eu}, $\mathbf{eu}_{\mathbf{c},n}$ acts on $V_\lambda$ by multiplication by the scalar
  \[
    \omega_\lambda(\mathbf{eu}_{\mathbf{c},n}) = \sum_{j=0}^{d-1}k_{-j}\frac{dn}{\chi_\lambda(1)}\langle (\chi_\lambda)_{\vert_{\langle s_0\rangle}},\det\nolimits^j\rangle_{\langle s_0\rangle} + \frac{c_0}{2}\frac{dn(n-1)}{\chi_\lambda(1)}(\langle (\chi_\lambda)_{\vert_{\langle s_{1}\rangle}},\det\rangle_{\langle s_{1}\rangle}-\langle (\chi_\lambda)_{\vert_{\langle s_{1}\rangle}},1\rangle_{\langle s_{1}\rangle}).
  \]
  But $\langle (\chi_\lambda)_{\vert_{\langle s_{1}\rangle}},1\rangle_{\langle s_{1}\rangle}=\chi_\lambda(1)-\langle (\chi_\lambda)_{\vert_{\langle s_{1}\rangle}},\det\rangle_{\langle s_{1}\rangle}$, so that
  \[
    \omega_\lambda(\mathbf{eu}_{\mathbf{c},n}) = d\sum_{j=0}^{d-1}k_{-j}\lvert\lambda^{(j+1)}\rvert-dc_0\sum_{\gamma\in[\lambda]}\cont(\gamma)
  \]
  and we obtain the desired formula because $J_n=\mathbf{eu}_{\mathbf{c},n}-\mathbf{eu}_{\mathbf{c},n-1}$.
\end{proof}

\begin{cor}
  Let $\lambda$ be a $d$-partition of $n$, $\mathfrak{t}$ be a standard $d$-tableau of shape $\lambda$ and $1 \leq k \leq n$. The element $J_k$ acts on $D_{\mathfrak{t}}$ by multiplication by $d(k_{1-c}-c_0(b-a))$, where $\mathfrak{t}^{-1}(k)=(a,b,c)$.
\end{cor}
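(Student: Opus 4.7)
The plan is to reduce this Corollary directly to the previous Proposition by exploiting the tower of restrictions along the chain $G(d,1,1)\subset G(d,1,2)\subset\cdots\subset G(d,1,n)$ and the fact that the line $D_{\mathfrak{t}}$ was \emph{defined} to respect this chain.

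First I would observe that $J_k=\mathbf{eu}_{\mathbf{c},k}-\mathbf{eu}_{\mathbf{c},k-1}$ lies in $\mathbb{C}G(d,1,k)$, so its action on $D_{\mathfrak{t}}$ depends only on which $G(d,1,k)$-isotypic component of $\Res_{G(d,1,k)}^{G(d,1,n)}(V_\lambda)$ contains $D_{\mathfrak{t}}$. By the construction of $D_{\mathfrak{t}}$ recalled just before the statement, this component is precisely $V_{\lambda^{\mathfrak{t}}[k]}$. Thus it suffices to compute the scalar by which $J_k$ acts on $V_{\lambda^{\mathfrak{t}}[k]}$ along the subspace $D_{\mathfrak{t}}$.

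Next I would apply the same definitional property one step lower: inside $V_{\lambda^{\mathfrak{t}}[k]}$, the line $D_{\mathfrak{t}}$ sits inside the $G(d,1,k-1)$-isotypic component $V_{\lambda^{\mathfrak{t}}[k-1]}$ of $\Res_{G(d,1,k-1)}^{G(d,1,k)}(V_{\lambda^{\mathfrak{t}}[k]})$. The standard $d$-tableau condition forces $[\lambda^{\mathfrak{t}}[k]]=[\lambda^{\mathfrak{t}}[k-1]]\sqcup\{\gamma\}$ with $\gamma=\mathfrak{t}^{-1}(k)=(a,b,c)$, so $\gamma$ is precisely a removable box of $[\lambda^{\mathfrak{t}}[k]]$.

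At this point the preceding Proposition applies verbatim, with $n$ replaced by $k$, $\lambda$ replaced by $\lambda^{\mathfrak{t}}[k]$, and the removable box equal to $\gamma=(a,b,c)$. It asserts that $J_k$ acts on $V_{\lambda^{\mathfrak{t}}[k]\setminus\{\gamma\}}=V_{\lambda^{\mathfrak{t}}[k-1]}$ by the scalar $d(k_{1-c}-c_0(b-a))$, and hence by the same scalar on the subspace $D_{\mathfrak{t}}$. There is no serious obstacle: the only thing to be careful about is the bookkeeping that the triple $(a,b,c)$ appearing in the statement is indeed the unique box added when passing from $\lambda^{\mathfrak{t}}[k-1]$ to $\lambda^{\mathfrak{t}}[k]$, which is immediate from the definition of a standard $d$-tableau.
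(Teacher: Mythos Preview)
Your proposal is correct and is exactly the argument the paper leaves implicit: the Corollary is stated without proof, being an immediate consequence of the preceding Proposition applied at level $k$ together with the defining property that $D_{\mathfrak{t}}$ lies in the component $V_{\lambda^{\mathfrak{t}}[k]}$ (and in $V_{\lambda^{\mathfrak{t}}[k-1]}$ upon further restriction). Your write-up simply spells out this restriction-along-the-tower reasoning carefully.
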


\subsubsection{Cellular characters for $\mathbf{JM}_{\mathbf{c}}(d,n)$ and $\mathbf{c}$-cellular characters}

As well as for the Gaudin algebra, we define cellular characters for the algebra $\mathbf{JM}_{\mathbf{c}}(d,n)$. We are again in the situation of the Appendix if we set $E=\mathbb{C}G(d,1,n)$, $A=\mathbb{C}G(d,1,n)$ acting on $E$ by left multiplication, $P=\mathbb{C}$ and $D_i=J_i$ acting on $E$ by right multiplication. The following is Definition \ref{def:cellular} in our setting.

\begin{defn}
  The $\mathbf{c}$-cellular character associated to $L\in\Irr(\mathbf{JM}_{\mathbf{c}}(d,n))$ is
  \[
    \gamma_L^{\mathbf{JM}_{\mathbf{c}}(d,n)}=\sum_{\chi\in\Irr(W)}\left[\Res_{\mathbf{JM}_{\mathbf{c}}(d,n)}^{\mathbb{C}G(d,1,n)}(V_\chi)\colon L\right]\chi,
  \]
  where $V_\chi$ is a representation of $W$ affording the character $\chi$.
\end{defn}

These characters are easier to compute than the $\mathbf{c}$-cellular characters, since $\mathbf{JM}_{\mathbf{c}}(d,n)$ is commutative and split, and are close to $\mathbf{c}$-cellular characters in the following sense.

\begin{thm}
  Every cellular character for the algebra $\mathbf{JM}_{\mathbf{c}}(d,n)$ is a sum of $\mathbf{c}$-cellular characters.
\end{thm}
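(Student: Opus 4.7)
The plan is to deduce the theorem from a semicontinuity principle for cellular characters, which is the natural result to expect in the general framework of the Appendix: for a flat family of commutative subalgebras of $\End_A(E)$ over a discrete valuation ring $R$ (with fraction field $K$ and residue field $k$), each special-fiber cellular character should be a non-negative integer combination of generic-fiber cellular characters, the multiplicities coming from the reduction of the simple modules over $K$. Under this principle, it suffices to exhibit $\mathbf{JM}_{\mathbf{c}}(d,n)$ as the special fiber of a family of commutative subalgebras whose generic fiber has the same cellular characters as $\Gau_{\mathbf{c}}(W)$.

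The construction uses a one-parameter family $z(\epsilon)\in V^{\reg}$ together with integer weights $a_1,\ldots,a_n$, chosen so that the rescaled Gaudin operators $\epsilon^{a_k}\mathcal{D}_{y_k}(z(\epsilon))$ converge to the Jucys-Murphy elements $J_k$ as $\epsilon\to 0$. A prototypical computation is the case $n=2$, $d=2$: with $z(\epsilon)=(\epsilon,1-\epsilon)$, $a_1=1$, $a_2=0$, a direct expansion of the formulas for $\mathcal{D}_{y_1}$ and $\mathcal{D}_{y_2}$ gives $\epsilon\,\mathcal{D}_{y_1}(z(\epsilon))\to -c_1\sigma_1=J_1$ and $\mathcal{D}_{y_2}(z(\epsilon))\to -c_0(s_{1,2,0}+s_{1,2,1})-c_1\sigma_2=J_2$. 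The general construction should follow the subgroup chain $G(d,1,1)\subset\cdots\subset G(d,1,n)$, exploiting the identity $J_k=\mathbf{eu}_{\mathbf{c},k}-\mathbf{eu}_{\mathbf{c},k-1}$ and the fact that $\mathbf{eu}_{\mathbf{c},k}$ is a specialization of a Gaudin operator for $G(d,1,k)$; the weights $a_k$ are adjusted so that, in the limit, the contribution of reflections involving positions $>k$ vanishes and only the reflections appearing in $J_k$ survive.

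These rescaled operators assemble into a commutative $R$-subalgebra of $KG(d,1,n)$, where $R=\mathbb{C}[\epsilon]_{(\epsilon)}$ and $K=\mathbb{C}(\epsilon)$. At the generic fiber it is isomorphic to the specialization of $\Gau_{\mathbf{c}}(W)$ at a generic point of $V^{\reg}$, and thus its cellular characters coincide with the Calogero-Moser $\mathbf{c}$-cellular characters (which are preserved under a generic specialization of the base). The special fiber contains $\mathbf{JM}_{\mathbf{c}}(d,n)$. The semicontinuity then yields that every cellular character of the special-fiber algebra is a sum of Calogero-Moser $\mathbf{c}$-cellular characters; since $\mathbf{JM}_{\mathbf{c}}(d,n)$ is a subalgebra of the special-fiber algebra, any cellular character for $\mathbf{JM}_{\mathbf{c}}(d,n)$ is a sum of cellular characters of the special-fiber algebra, and hence a sum of Calogero-Moser $\mathbf{c}$-cellular characters.

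The main technical obstacle is to construct the path $z(\epsilon)$ for general $(d,n)$ so that all the $J_k$ arise simultaneously as rescaled limits, and to check that the resulting $R$-algebra is flat with the reduction modulo $\epsilon$ behaving as expected within the Appendix's framework. Keeping track of which reflections dominate the leading order of each $\mathcal{D}_{y_k}(z(\epsilon))$ is a combinatorial bookkeeping task guided by the explicit formula for $\mathcal{D}_{y_k}$ computed earlier in the section.
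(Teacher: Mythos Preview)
Your strategy is exactly the right one and is the same as the paper's: realise $\mathbf{JM}_{\mathbf{c}}(d,n)$ as a degeneration of the Gaudin algebra and invoke the semicontinuity principle of the Appendix (Proposition~\ref{prop:sum_cell}). However, as you yourself point out, the proposal is incomplete: you do not construct the path $z(\epsilon)$ and the weights $a_k$ for general $(d,n)$, and you leave this as ``the main technical obstacle''. Without this construction there is no proof.

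The paper resolves this obstacle in a way that is both simpler and avoids the auxiliary parameter $\epsilon$ entirely. Rather than choosing a curve in $V^{\reg}$ and rescaling by powers of $\epsilon$, the paper rescales $\mathcal{D}_{y_k}$ by the coordinate $x_k$ itself, producing generators $x_k\mathcal{D}_{y_k}$ of the same $\mathbb{C}(V)$-algebra. It then performs $n$ successive specialisations along the flag of primes
\[
0\subset (x_1)\subset (x_1,x_2)\subset\cdots\subset (x_1,\ldots,x_n)
\]
in $\mathbb{C}[V]$, applying Proposition~\ref{prop:sum_cell} at each step. A short explicit computation shows that after all $n$ steps the image of $x_k\mathcal{D}_{y_k}$ is exactly $J_k$, so the final specialised algebra \emph{equals} $\mathbf{JM}_{\mathbf{c}}(d,n)$, not merely contains it, and no further subalgebra argument is needed.

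Your one-parameter idea can in fact be made to work: taking $z_k(\epsilon)=\epsilon^{\,n+1-k}$ and $a_k=n+1-k$ bundles the paper's $n$ steps into a single degeneration, and one checks directly that $\epsilon^{a_k}\mathcal{D}_{y_k}(z(\epsilon))\to J_k$. But this still requires the explicit verification you omitted, plus an extra application of Proposition~\ref{prop:sum_cell} to pass from cellular characters over $\mathbb{C}(V)$ to those over $\mathbb{C}(\epsilon)$. The paper's iterated specialisation is the cleaner packaging: the rescaling factors are intrinsic (the coordinates $x_k$), the integrally-closed hypothesis is automatic at each stage, and no genericity of a chosen curve needs to be checked.
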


\begin{proof}
  We denote by $\Gau_{\mathbf{c}}^{(0)}(d,n)$ the sub-$\mathbb{C}[V]$-algebra of $\mathbb{C}[V^{\reg}]G(d,1,n)$ generated by
  \[
    x_k\mathcal{D}_{y_k} = \sum_{r=1}^{d-1}c_r\zeta^r\sigma_k^t-c_0\sum_{r=0}^{d-1}\left(\sum_{1\leq i < k}\frac{-\zeta^rx_k}{x_i-\zeta^rx_k}s_{i,k,r}+\sum_{k < j \leq n}\frac{x_k}{x_k-\zeta^rx_j}s_{k,j,r}\right),
  \]
  for $1 \leq k \leq n$. Since $x_k$ is invertible in $\mathbb{C}(V)$ for all $k$, the algebras $\mathbb{C}(V)\Gau_{\mathbf{c}}(G(d,1,n))$ and $\mathbb{C}(V)\Gau_{\mathbf{c}}^{(0)}(d,n)$ are equal, and the $\mathbf{c}$-cellular characters are therefore equal to the cellular characters for $\Gau_{\mathbf{c}}^{(0)}(d,n)$. We then specialize the algebra $\Gau_{\mathbf{c}}^{(0)}(d,n)$ with respect to the following increasing sequence of prime ideals
  \[
    0\subset \mathfrak{p}_1\subset\mathfrak{p}_2\subset\cdots\subset\mathfrak{p}_n
  \]
  where $\mathfrak{p}_i$ is the prime ideal of $\mathbb{C}[V]$ generated by $x_1,\ldots,x_i$. We define recursively the algebra $\Gau_{\mathbf{c}}^{(i)}(d,n)$ by
  \[
    \Gau_{\mathbf{c}}^{(i)}(d,n)=\Gau_{\mathbf{c}}^{(i-1)}(d,n)/x_i\Gau_{\mathbf{c}}^{(i-1)}(d,n),
  \]
  and we denote by $\pi^{(i)}\colon\Gau_{\mathbf{c}}^{(i)}(d,n)\rightarrow\Gau_{\mathbf{c}}^{(i+1)}(d,n)$ the corresponding quotient map, and $\Pi^{(i)} = \pi^{(i-1)}\circ\cdots\pi^{(0)}$. The algebra $\Gau_{\mathbf{c}}^{(n)}(d,n)$ is hence a subalgebra of $\mathbb{C}G(d,1,n)$.
  
  By Proposition \ref{prop:sum_cell}, the cellular characters for the algebra $\Gau_{\mathbf{c}}^{(i)}(d,n)$ are sums of cellular characters for the algebra $\Gau_{\mathbf{c}}^{(i-1)}(d,n)$, and therefore the cellular characters for the algebra $\Gau_{\mathbf{c}}^{(i)}(d,n)$ are sums of $\mathbf{c}$-cellular characters.

  An easy induction shows that if $i\geq k$ then
  \[
    \Pi^{(i)}(x_k\mathcal{D}_k)=\sum_{r=1}^{d-1}c_r\zeta^r\sigma_k^r-c_0\sum_{r=0}^{d-1}\sum_{1\leq j < k}s_{j,k,r},
  \]
  and if $i<k$ then
  \[
     \Pi^{(i)}(x_k\mathcal{D}_k)=\sum_{r=1}^{d-1}c_r\zeta^r\sigma_k^r-c_0\sum_{r=0}^{d-1}\left[\sum_{j=1}^is_{j,k,r}+\sum_{i<j<k}\frac{-\zeta^rx_k}{x_j-\zeta^rx_k}s_{j,k,r}+\sum_{k < j \leq n}\frac{x_k}{x_k-\zeta^rx_j}s_{k,j,r}\right].
  \]
  The algebra $\Gau_{\mathbf{c}}^{(n)}(d,n)$ is thus equal to $\mathbf{JM}_{\mathbf{c}}(d,n)$ since $\Pi^{(n)}(x_k\mathcal{D}_k)=J_k$, which ends the proof.
\end{proof}

As a Corollary, we retrieve the fact that the $\mathbf{c}$-cellular characters are generically irreducible, see \cite[Theorem 14.4.1]{bonnafe-rouquier} or \cite[Theorem 10(3)]{bellamy}.

\begin{cor}
  \label{cor:cm-generic}
  Suppose that the parameter $\mathbf{c}$ is such that
  \[
    c_0\neq 0\quad\text{and}\quad(k_p-k_q)-c_0j\neq 0,
  \]
  for all $1\leq p\neq q \leq d$ and $-n<j<n$. Then the Calogero-Moser $\mathbf{c}$-cellular characters of $G(d,1,n)$ are irreducible.
\end{cor}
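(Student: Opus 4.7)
The plan is to deduce the corollary directly from the preceding theorem. Since every $\mathbf{JM}_{\mathbf{c}}(d,n)$-cellular character is a sum of $\mathbf{c}$-cellular characters, and cellular characters are nonzero non-negative $\mathbb{Z}$-combinations of irreducible characters, it suffices to prove that under these hypotheses each $\mathbf{JM}_{\mathbf{c}}(d,n)$-cellular character is already an irreducible character of $G(d,1,n)$: an irreducible expressed as a sum of nonzero non-negative combinations of irreducibles must consist of a single summand equal to itself.

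To establish this, I will show that distinct standard $d$-tableaux $\mathfrak{t}$ of $d$-partitions of $n$ produce distinct joint eigenvalue tuples of $(J_1, \ldots, J_n)$ on the basis vectors $D_{\mathfrak{t}}$. By the corollary on the action of $J_k$, this eigenvalue is $d\bigl(k_{1-c_k} - c_0(b_k - a_k)\bigr)$ with $\mathfrak{t}^{-1}(k) = (a_k, b_k, c_k)$. If distinct tableaux give distinct eigenvalue tuples, then each simple $\mathbf{JM}_{\mathbf{c}}(d,n)$-module appears in exactly one $V_\lambda$ with multiplicity one, so its cellular character is the irreducible character $\chi_\lambda$.

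I would prove this separation by induction on $k$, showing that the partial shape $\lambda^{\mathfrak{t}}[k]$ is determined by the eigenvalues of $J_1, \ldots, J_k$. For $k = 1$ the only addable box is $(1,1,c)$ of content $0$, so $J_1 = d\,k_{1-c}$, and the hypothesis specialized at $j = 0$ is exactly $k_p \neq k_q$ for $p \neq q$, which pins down $c$. For the inductive step, $\mathfrak{t}$ and $\mathfrak{t}'$ extend the same $d$-partition $\mu$ of size $k - 1$ by an addable box; agreement of the $J_k$-eigenvalues reads $k_{1-c_k} - k_{1-c'_k} = c_0\bigl((b_k - a_k) - (b'_k - a'_k)\bigr)$. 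The key combinatorial observation is that the content of an addable box of $\mu^{(c)}$ lies in $[-m_c, m_c]$, where $m_c = |\mu^{(c)}|$; hence if $c_k \neq c'_k$ the content difference $j$ satisfies $|j| \leq m_{c_k} + m_{c'_k} \leq k - 1 < n$, which contradicts the hypothesis. Therefore $c_k = c'_k$; then $c_0 \neq 0$ forces equal contents, and two addable boxes of the same component with the same content must coincide.

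The main subtle point is the bound $|j| < n$ on the content difference: it does not come from the size of any single component but from the fact that the two components relevant at step $k$ together hold at most $k - 1 < n$ boxes, which is precisely the range covered by the hypothesis. Granting this combinatorial fact, the passage from irreducibility of $\mathbf{JM}_{\mathbf{c}}(d,n)$-cellular characters back to the irreducibility of the $\mathbf{c}$-cellular characters, via the preceding theorem, is purely formal.
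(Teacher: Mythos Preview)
Your proposal is correct and follows the same strategy as the paper: reduce to the Jucys--Murphy algebra via the preceding theorem, then show that distinct standard $d$-tableaux yield distinct joint eigenvalue tuples for $(J_1,\ldots,J_n)$. Your handling of the content bound is in fact tidier than the paper's: by observing that when $c_k\neq c'_k$ the two relevant components of $\mu$ together contain at most $k-1$ boxes, you get $\lvert j\rvert\leq k-1<n$ directly, whereas the paper splits into the cases $\lvert j\rvert<n$ and $\lvert j\rvert\geq n$ and argues separately that the latter forces $p=n$ and $c_p=c'_p$.
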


\begin{proof}
  Since $\mathbf{JM}_{\mathbf{c}}(d,n)\subset\mathbb{C}G(d,1,n)$, any simple representation occurs in some $V_\lambda$ and is therefore of the form $D_{\mathfrak{t}}$ for $\mathfrak{t}$ a standard $d$-tableau. We show that these one dimensional representations of $\mathbf{JM}_{\mathbf{c}}(n,d)$ are pairwise non-isomorphic. Let $\mathfrak{t}$ and $\mathfrak{t}'$ be two distinct standard $d$-tableaux. We prove that the sequences
  \[
    (k_{1-c_p}-c_0(b_p-a_p))_{1\leq p \leq n} \quad\text{and}\quad (k_{1-c'_p}-c_0(b'_p-a'_p))_{1\leq p \leq n}
  \]
  are different, where $(a_p,b_p,c_p) = \mathfrak{t}^{-1}(p)$ and $(a'_p,b'_p,c'_p) = (\mathfrak{t}')^{-1}(p)$. Let $1\leq p\leq n$ be the minimal integer such that $\mathfrak{t}^{-1}(p)$ and $(\mathfrak{t}')^{-1}(p)$ are different. Denote by $\mu$ the common partition $\lambda^{\mathfrak{t}}[p-1]=\lambda^{\mathfrak{t}'}[p-1]$.

  Suppose first that $-n<\cont(\mathfrak{t}^{-1}(p))-\cont((\mathfrak{t}')^{-1}(p))<n$. If $c_p\neq c'_p$ then the hypothesis on the parameter $\mathbf{c}$ implies that $k_{1-c_p}-c_0(b_p-a_p)\neq k_{1-c'_p}-c_0(b'_p-a'_p)$. If $c_p=c_p'$ then both $\mathfrak{t}^{-1}(p)$ and $(\mathfrak{t}')^{-1}(p)$ are addable boxes of $\mu^{(c_p)}$. Since there exists at most one addable box to a Young diagram with a given content, we deduce that the contents of $\mathfrak{t}^{-1}(p)$ and $(\mathfrak{t}')^{-1}(p)$ are different, and as $c_0\neq 0$ we have $k_{1-c_p}-c_0(b_p-a_p)\neq k_{1-c_p}-c_0(b'_p-a'_p)$.
  
  Therefore we may and will assume that $\lvert \cont(\mathfrak{t}^{-1}(p))-\cont((\mathfrak{t}')^{-1}(p)) \rvert \geq n$. Since the absolute value of the content of a box of a $d$-partition of $n$ cannot exceed $n-1$, the contents of $\mathfrak{t}^{-1}(p)$ and of $(\mathfrak{t}')^{-1}(p)$ are of different signs. Up to exchanging $\mathfrak{t}$ and $\mathfrak{t}'$, we suppose that the content $\mathfrak{t}^{-1}(p)$ is equal to $x>0$ and the content of $(\mathfrak{t}')^{-1}(p)$ is equal to $y<0$ (neither $x$ nor $y$ can be equal to $0$ since $0\leq x <n$, $-n < y \leq 0$ and $x-y\geq n$). The Young diagram $[\mu]$ must contain a box of content $x-1$ and a box of content $y+1$, and therefore has at least $x-y-1$ boxes. Since $x-y\geq n$, we obtain that $p-1\geq n-1$ and hence $p=n$. If $c_p\neq c'_p$ then the $d$-partition $\mu$ has at least $x-y$ boxes, which is impossible, so that $c_p=c'_p$. But $k_{1-c_p}-c_0x \neq k_{1-c_p}-c_0y$ because $c_0\neq 0$.
\end{proof}

%%%%%%%%%%%%%%%%	End of file	%%%%%%%%%%%%%

%%% Local Variables:
%%% mode: latex
%%% TeX-master: "../constructibles_d12"
%%% End:

%%%%%%%%%%%%%%%%%%%%%%%%%%%%%%%%%%%%%%%%%%%%%%%
%                 		              %
%  Leclerc-Miyachi constructible characters   %
%                 			      %
%%%%%%%%%%%%%%%%%%%%%%%%%%%%%%%%%%%%%%%%%%%%%%%

\section{Leclerc-Miyachi constructible characters}
\label{sec:lm_constructible}

In this section, we introduce other characters of $G(d,1,n)$, whose definition was given by Leclerc and Miyachi in \cite{leclerc-miyachi}, using canonical bases of some representations over the quantum group $\mathcal{U}_q(\mathfrak{sl}_{\infty})$. If $d=2$, Leclerc and Miyachi have shown that these characters are equal to Lusztig's constructible characters \cite[Chapter 22]{lusztig-unequal} of the Coxeter group of type $B_n$, conjectured to be equal to the Kazhdan-Lusztig cellular characters. Let $q$ be an indeterminate over $\mathbb{Q}$.

\subsection{The Hopf algebra $\mathcal{U}_q(\mathfrak{sl}_{\infty})$}

The quantum group $\mathcal{U}_q(\mathfrak{sl}_{\infty})$ associated with the doubly infinite Dynkin diagram $A_{\infty}$ is the $\mathbb{Q}(q)$-algebra generated by $E_i,F_i$ and $K_i^{\pm 1}$ for $i\in\mathbb{Z}$ subject to the following relations:
\begin{align*}
  K_iK_i^{-1}=1=K_i^{-1}K_i,&\quad K_iK_j=K_jK_i,\\
  K_iE_j=q^{-\delta_{i,j-1}+2\delta_{i,j}-\delta_{i,j+1}}E_jK_i,&\quad K_iF_j=q^{\delta_{i,j-1}-2\delta_{i,j}+\delta_{i,j+1}}F_jK_i,\\
  E_iF_j-F_jE_i &= \delta_{i,j}\frac{K_i-K_i^{-1}}{q-q^{-1}},\\
  E_iE_j&=E_jE_i\text{ if }\lvert i-j\rvert >1,\\
  E_i^2E_j-(q+q^{-1})&E_iE_jE_i+E_jE_i^2=0\text{ if }\lvert i-j\rvert =1,\\
  F_iF_j&=F_jF_i\text{ if }\lvert i-j\rvert >1,\\
  F_i^2F_j-(q+q^{-1})&F_iF_jF_i+F_jF_i^2=0\text{ if }\lvert i-j\rvert =1,
\end{align*}
for all $i,j\in\mathbb{Z}$. It is a Hopf algebra, and we choose the following comultiplication $\Delta$, counit $\varepsilon$ and antipode $S$:
\begin{align*}
  \Delta(K_i)&=K_i\otimes K_i& S(K_i)&=K_i^{-1} & \varepsilon(K_i)&=1,\\
  \Delta(E_i)&=E_i\otimes 1 + K_i^{-1}\otimes E_i& S(E_i)&=-K_iE_i & \varepsilon(E_i)&=0,\\
  \Delta(E_i)&=F_i\otimes K_i + 1\otimes F_i& S(F_i)&=-F_iK_i^{-1} & \varepsilon(F_i)&=0.
\end{align*}

The fundamental roots of $\mathfrak{sl}_{\infty}$ are denoted by $(\Lambda_i)_{i\in\mathbb{Z}}$. For all $i\in\mathbb{Z}$, we denote by $V(\Lambda_i)$ the integrable irreducible representation of highest weight $\Lambda_i$. It admits $(v_\beta)_{\beta}$ as a $\mathbb{Q}(q)$-basis, where $\beta$ runs in $\left\{(\beta_k)_{k\leq i}\ \middle\vert\ \beta_k < \beta_{k+1}, \beta_k=k\text{ for } k\ll 0\right\}$. We will identify such a $\beta$ in this set with the corresponding subset $\left\{\beta_k\ \middle\vert\ k\leq i\right\}$ of $\mathbb{Z}$ and may write $j\in\beta$ or $\beta\cup\{l\}$ if $l\not\in\beta$. On this basis $(v_\beta)_\beta$, the action of the generators $E_i,F_i$ and $K_i$ are \cite{leclerc-miyachi}:
\begin{align*}
  E_iv_\beta &=
               \begin{cases}
                 v_\gamma & \text{if }i\not\in\beta\text{ and } i+1\in\beta,\text{ with } \gamma=(\beta\setminus\{i+1\})\cup\{i\},\\
                 0 & \text{otherwise},
               \end{cases}
\\
  F_iv_\beta &=
               \begin{cases}
                 v_\gamma & \text{if }i\in\beta\text{ and } i+1\not\in\beta,\text{ with } \gamma=(\beta\setminus\{i\})\cup\{i+1\},\\
                 0 & \text{otherwise},
               \end{cases}\\
  K_iv_\beta &=
               \begin{cases}
                 qv_\beta & \text{if }i\in\beta\text{ and } i+1\not\in\beta,\\
                 q^{-1}v_\beta & \text{if }i\not\in\beta\text{ and } i+1\in\beta,\\
                 v_\beta & \text{otherwise}.
               \end{cases}
\end{align*}
The highest weight vector is $v_{\beta^i}$ where $\beta^i = \mathbb{Z}_{\leq i}$.

\subsection{Fock spaces, canonical bases and constructible characters}

\subsubsection{Fock space of a representation}

Let $\mathbf{r}=(r_1,\ldots,r_d)$ be a $d$-tuple of integers with $r_1 \geq r_2\geq \cdots \geq r_d$ and we consider the fundamental weight $\Lambda_{\mathbf{r}}=\sum_{i=1}^d\Lambda_{r_i}$. The integrable irreducible $\mathcal{U}_q(\mathfrak{sl}_{\infty})$-module of highest weight $\Lambda_{\mathbf{r}}$ is denoted by $V(\Lambda_{\mathbf{r}})$. Denote by $F(\Lambda_{\mathbf{r}})=V(\Lambda_{r_1})\otimes\cdots\otimes V(\Lambda_{r_d})$ the associated Fock space, which admits $v_{\beta^{r_1}}\otimes\cdots\otimes v_{\beta^{r_d}}$ as a highest weight vector of weight $\Lambda_{\mathbf{r}}$. From now on, we view the module $V(\Lambda_r)$ inside the Fock space $F(\Lambda_{r})$. The module $F(\Lambda_{\mathbf{r}})$ has a basis $S(\Lambda_{\mathbf{r}})$ given by
\[
  S(\Lambda_{\mathbf{r}})=\left\{v_{\beta_1}\otimes\cdots\otimes v_{\beta_d}\ \middle\vert\ v_{\beta_i}\in \mathbb{Z}_{\leq r_i}\right\}.
\]
This is the standard basis of $F(\Lambda_{\mathbf{r}})$ and we prefer to write its indexing set as a set of $d$-symbols
\[
  S=
  \begin{pmatrix}
    \beta_1\\
    \beta_2\\
    \vdots\\
    \beta_d
  \end{pmatrix},
\]
where $\beta_i=(\beta_{i,k})_{k\leq r_i}$ is a sequence of integers with $\beta_{i,k}<\beta_{i,k+1}$ and $\beta_{i,k}=k$ for $k\ll 0$. The height of such a symbol is the integer $\sum_{i=1}^{d}\sum_{k\leq r_i}(\beta_{i,k}-k)$. The highest weight vector $v_{\beta^{r_1}}\otimes\cdots\otimes v_{\beta^{r_d}}$ of weight $\Lambda_{\mathbf{r}}$ corresponds to $v_{S^{0}}$, where $S^0$ is the $d$-symbol with $i$-th line equal to $\beta^{r_i}$. Finally, a $d$-symbol is said to be standard if $\beta_{i,k}<\beta_{j,k}$ for all $i\leq j$ and $k\leq r_j$.

\subsubsection{Canonical bases}

Let $x\mapsto\overline{x}$ be the involution of $\mathcal{U}_q(\mathfrak{sl}_{\infty})$ defined as the unique $\mathbb{Q}$-linear ring morphism satisfying
\begin{align*}
  \overline{q}&=q^{-1}, & \overline{K_i}&=K_i^{-1} & \overline{E_i}&=E_i, & \overline{F_i} &= F_i.
\end{align*}
Since $V(\Lambda_{\mathbf{r}})$ is a highest weight module with highest weight vector $v_{S^0}$, any element $v\in V(\Lambda_{\mathbf{r}})$ can be written $v=xv_{S^0}$, with $x\in\mathcal{U}_q(\mathfrak{sl}_{\infty})$, and we set $\overline{v}=\overline{x}v_{S^0}$.

Let $R$ be the subring of $\mathbb{Q}(q)$ of rational functions which are regular at $q=0$. Let $F_{R}(\Lambda_{\mathbf{r}})$ be the $R$-sublattice of $F(\Lambda_{\mathbf{r}})$ spanned by the standard basis $S(\Lambda_{\mathbf{r}})$.

The canonical basis $(b_{\Sigma})_{\Sigma}$ of $V(\Lambda_{\mathbf{r}})$ is indexed by the set of standard $d$-symbols and is characterized by the following properties:
\[
  b_{\Sigma} \equiv v_{\Sigma} \mod qF_{R}(\Lambda_{\mathbf{r}})\quad\text{and}\quad\overline{b_{\Sigma}}=b_{\Sigma}.
\]
Canonical bases were introduced in \cite{lusztig-canonical}, see also \cite{kashiwara}. We will denote this basis by $B(\Lambda_{\mathbf{r}})$.

\subsubsection{Constructible characters}

In \cite{leclerc-miyachi}, a closed expression of any $b_{\Sigma}\in B(\Lambda_{r})$ in the standard basis is given when $d=2$, and is compared to Lusztig's constructible characters. Leclerc and Miyachi then propose a definition of constructible characters via canonical bases for the complex reflection group $G(d,1,n)$.

To any $d$-symbol, we associate a $d$-partition $(\lambda^{(1)},\ldots,\lambda^{(d)})$ of its height by setting
\[
  \lambda_j^{(i)} = \beta_{i,r_i-j+1}-(r_i-j+1).
\]
This is a bijection between the set of $d$-partitions of $n$ and the set of $d$-symbols of height $n$.  

\begin{defn}[{\cite[6.3]{leclerc-miyachi}}]
  For a standard $d$-symbol $\Sigma$ of height $n$, we write the expression of $b_{\Sigma}$ in terms of the standard basis
  \[
    b_{\Sigma}=\sum_{S}a_{\Sigma}^S(q)v_S,
  \]
  with $S$ running in the set of $d$-partitions of $n$. The Leclerc-Miyachi $\mathbf{r}$-constructible character of $G(d,1,n)$ corresponding to the standard $d$-symbol $\Sigma$ is
  \[
    \gamma_{\Sigma}=\sum_{S}a_{\Sigma}^S(1)\chi_S,
  \]
  where $\chi_{S}$ is the character of the representation of $G(d,1,n)$ associated with the partition corresponding to the $d$-symbol $S$.
\end{defn}
 
\subsection{The asymptotic case}

Since we aim to compare the set of Calogero-Moser cellular characters and the set of Leclerc-Miyachi constructible character, we expect that the Leclerc-Miyachi constructible characters enjoy a generic property similar to Corollary \ref{cor:cm-generic}. This generic property on the parameter $c$ will turn out to be an asymptotic property on the parameter $\mathbf{r}$.

\begin{lem} 
  Let $\mathbf{r}=(r_1,\ldots,r_d)$ and $n\in\mathbb{N}$. We suppose that $r_{i}-r_{i+1}\geq n$ for all $1 \leq i \leq d-1$. Then every $d$-symbol of height at most $n$ is standard.
\end{lem}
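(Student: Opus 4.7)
The plan is to establish the stronger statement that for every $i<j$ and every $k\le r_j$ the entry $\beta_{i,k}$ is forced to equal $k$. Combined with the trivial bound $\beta_{j,k}\ge k$, this immediately yields the comparison $\beta_{i,k}\le\beta_{j,k}$ required to conclude that the symbol is standard.

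First I would record two elementary monotonicity facts about any row $\beta_i$. From $\beta_{i,k}=k$ for $k\ll 0$ together with the strict increase $\beta_{i,k}<\beta_{i,k+1}$, an immediate induction gives $\beta_{i,k}\ge k$ for all $k\le r_i$; the same strict increase shows that $k\mapsto\beta_{i,k}-k$ is weakly increasing, because $\beta_{i,k+1}-(k+1)\ge\beta_{i,k}+1-(k+1)=\beta_{i,k}-k$. Consequently each summand in the height $\sum_{i}\sum_{k\le r_i}(\beta_{i,k}-k)$ is a nonnegative integer, and in any single row, once some summand at index $k_0$ is $\ge 1$, all later summands in that row are $\ge 1$ as well.

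The main step is then a short contradiction. Suppose $\beta_{i,k_0}>k_0$ for some $i<j$ and some $k_0\le r_j$. Weak monotonicity gives $\beta_{i,\ell}-\ell\ge 1$ for every $\ell$ with $k_0\le\ell\le r_i$, so the contribution of row $i$ to the height is already at least $r_i-k_0+1$. Telescoping the hypothesis $r_l-r_{l+1}\ge n$ yields $r_i-r_j\ge (j-i)n\ge n$, hence
\[
r_i-k_0+1\ \ge\ r_i-r_j+1\ \ge\ n+1,
\]
contradicting the assumption that the height is at most $n$. Therefore $\beta_{i,k}=k$ throughout the claimed range, which together with $\beta_{j,k}\ge k$ establishes the standard condition.

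I do not anticipate any real obstacle; the argument is pure bookkeeping. The one place where the hypothesis is used crucially is the telescoping step that converts the pairwise gaps $r_l-r_{l+1}\ge n$ into the single gap $r_i-r_j\ge n$. This is exactly what makes a single nontrivial entry in row $i$ at a column $\le r_j$ force an entire tail of length at least $n+1$ to contribute to the height, and thereby exceed the bound.
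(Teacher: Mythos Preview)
Your proof is correct and follows essentially the same approach as the paper's. The paper's argument is terser—it simply asserts that the height bound forces $\beta_{i,k}=k$ for $k\le r_{i+1}$ and then uses $r_j\le r_{i+1}$—but the underlying reason is exactly the counting-of-tail-contributions argument you spell out; your telescoping to $r_i-r_j\ge n$ is a minor variation on the paper's use of the single gap $r_i-r_{i+1}\ge n$ combined with monotonicity of the $r_l$.
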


\begin{proof}
  Let $S=(\beta_i)_{1\leq i \leq d}$ be a symbol of height smaller than $n$. By the hypothesis on the parameters, we necessarily have $\beta_{i,k}=k$ for $k\leq r_{i+1}$. Therefore if $i<j$ and $k\leq r_j$ we have $\beta_{i,k} = k \leq \beta_{j,k}$ and the $d$-symbol $S$ is standard.
\end{proof}

If $r_{i}-r_{i+1}\geq n$ then the number of $\mathbf{r}$-constructible characters of $G(d,1,n)$ is the same as the number of irreducible characters of $G(d,1,n)$. It remains to show that these $\mathbf{r}$-constructible characters are irreducible.

\begin{thm}
  \label{thm:lm-asymptotic}
  Let $\mathbf{r}=(r_1,\ldots,r_d)$ and $k\in\mathbb{N}$. We suppose that $r_{i}-r_{i+1}\geq n$ for all $1 \leq i \leq d-1$. For any $d$-symbol $\Sigma$ of height at most $n$ we have $b_{\Sigma}=v_{\Sigma}$.
\end{thm}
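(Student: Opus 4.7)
The canonical basis element $b_\Sigma$ is uniquely characterized by
\[
  b_\Sigma \equiv v_\Sigma \pmod{qF_R(\Lambda_{\mathbf{r}})}\quad\text{and}\quad\overline{b_\Sigma}=b_\Sigma,
\]
so proving $b_\Sigma=v_\Sigma$ reduces to showing $\overline{v_\Sigma}=v_\Sigma$. The plan is to exhibit $v_\Sigma=\mathbf{F}\cdot v_{S^0}$ for some monomial $\mathbf{F}$ in the Chevalley generators $F_i$. Since $\overline{F_i}=F_i$ and $v_{S^0}$ is bar-invariant as the highest weight vector, this immediately gives $\overline{v_\Sigma}=\overline{\mathbf{F}}\,v_{S^0}=\mathbf{F}\,v_{S^0}=v_\Sigma$.

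Let $n_j=\lvert\lambda^{(j)}\rvert$ and let $\ell_j$ denote the length of $\lambda^{(j)}$; by hypothesis $\sum_j n_j\le n$, and clearly $\max(\ell_j,\lambda^{(j)}_1)\le n_j$. Inside the single module $V(\Lambda_{r_j})$ one can write $v_{\beta_j}=F_{i_{j,1}}F_{i_{j,2}}\cdots F_{i_{j,n_j}}\,v_{\beta^{r_j}}$ by moving the entries of $\beta^{r_j}$ up one unit at a time. All indices arising this way lie in the \emph{active range} $A_j=[r_j-\ell_j+1,\,r_j+\lambda^{(j)}_1-1]\subseteq[r_j-n_j+1,\,r_j+n_j-1]$. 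Set $F^{(j)}=F_{i_{j,1}}\cdots F_{i_{j,n_j}}$ and $\mathbf{F}=F^{(d)}F^{(d-1)}\cdots F^{(1)}$.

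I would then prove by induction on $j=1,\ldots,d$ that applying $F^{(j)}$ to the intermediate tensor $v_{\beta_1}\otimes\cdots\otimes v_{\beta_{j-1}}\otimes v_{\beta^{r_j}}\otimes\cdots\otimes v_{\beta^{r_d}}$ produces the next intermediate tensor, modifying only the $j$-th factor. Expanding the iterated coproduct of a single $F_i$ with $i\in A_j$, the statement reduces to: for $j'<j$, both $F_iv_{\beta_{j'}}=0$ and $K_iv_{\beta_{j'}}=v_{\beta_{j'}}$; and for $j'>j$, the analogous identities with $v_{\beta^{r_{j'}}}$ in place of $v_{\beta_{j'}}$. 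The hypothesis $r_{j'}-r_{j'+1}\ge n$ yields $r_{j'}-r_j\ge(j-j')n\ge n$ when $j'<j$; combined with $n_j+n_{j'}\le n$ this gives $i\le r_j+n_j-1<r_{j'}-n_{j'}\le r_{j'}-\ell_{j'}$, so that both $i$ and $i+1$ lie in the unmodified tail of $\beta_{j'}$ (where $\beta_{j'}$ coincides with $\beta^{r_{j'}}$). The symmetric estimate for $j'>j$ places both $i$ and $i+1$ strictly above $r_{j'}$, hence outside the support of $\beta^{r_{j'}}$. In either case the required vanishing and triviality follow from the explicit rules for the action of $F_i,K_i$ on the basis vectors $v_\beta$.

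The main obstacle is precisely this combinatorial bookkeeping: identifying $A_j$ correctly and verifying that the asymptotic hypothesis $r_{j'}-r_{j'+1}\ge n$ together with $\sum_k n_k\le n$ truly separates the active range of the $j$-th factor from the others. Once the inductive step is in place, no $K_i$-twist ever contributes a factor of $q$, and the induction outputs $v_\Sigma=\mathbf{F}\,v_{S^0}$ as a clean monomial, completing the proof via the bar-invariance argument described above.
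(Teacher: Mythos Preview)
Your proof is correct. Both your argument and the paper's reduce to the same core observation: under the asymptotic hypothesis, each standard vector $v_\Sigma$ can be written as a bar-invariant monomial in the $F_i$'s applied to the highest weight vector $v_{S^0}$, hence $\overline{v_\Sigma}=v_\Sigma$ and so $b_\Sigma=v_\Sigma$.

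The difference lies in how that monomial is produced and verified. The paper runs the Leclerc--Toffin algorithm and proves by induction on the height that the intermediate basis element $A_\Sigma$ equals $v_\Sigma$; at each step it identifies the smallest index $k_0'$ prescribed by the algorithm, checks it falls in the ``active'' component $i_0$, and verifies that $F_{k_0'}v_{\Sigma'}=v_\Sigma$ by analyzing the other tensor factors. You instead construct the monomial directly, component by component, as $\mathbf{F}=F^{(d)}\cdots F^{(1)}$ with each $F^{(j)}$ acting only on the $j$-th tensor slot. Your verification that no cross-terms or $K_i$-twists arise is the same separation-of-ranges argument the paper uses, but packaged as a single clean inequality $A_j\cap A_{j'}=\emptyset$ rather than being rediscovered at each inductive step. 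Your route is slightly more elementary in that it requires no acquaintance with the Leclerc--Toffin machinery; the paper's route has the advantage of fitting into a framework that also computes $A_\Sigma$ (and hence $b_\Sigma$) in non-asymptotic situations, which is what is exploited elsewhere in the paper for $G(d,1,2)$.
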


\begin{proof}
  This is an application of the algorithm presented in \cite{leclerc-toffin} for the computation of the canonical basis. We show that the intermediate basis $(A_{\Sigma})_{\Sigma}$ of \cite[Section 4.1]{leclerc-toffin} satisfies $A_\Sigma=v_\Sigma$, which implies that $b_{\Sigma}=v_{\Sigma}$ since $\overline{A_\Sigma}=A_\Sigma$. We proceed by induction on $n$.

  For $n=1$, any $d$-symbol of height $1$ is given by $S_l=(\beta_i)_{1\leq i \leq d}$ with $\beta_{i,k}=k$ for every $1\leq i \leq d$ and $k\leq r_{i}$ except for $\beta_{l,r_l}=r_{l}+1$. We immediately obtain that $A_{S_l} = F_{r_l}v_{S^0}$. By the hypothesis on $\mathbf{r}$, the only line $\beta_k$ of $\Sigma$ with $r_l\in\beta_k$ and $r_l+1\not\in\beta_k$ is $\beta_l$. Therefore $F_{r_l}v_{S^0}=v_{S_l}$.

  Suppose that for all parameters $\mathbf{r}$ such that $r_{i}-r_{i+1}\geq n$ for all $1\leq i \leq d$ we have $A_{\Sigma}=v_{\Sigma}$ for all standard $d$-symbols $\Sigma$ of height $n$. Let $\mathbf{r}$ be a parameter such that $r_{i}-r_{i+1}\geq n+1$ for all $1\leq i \leq d$ and $\Sigma=(\beta_i)_{1\leq i \leq d}$ be a standard $d$-symbol of height $n+1$. Let $i_0$ be the greatest integer such that $\beta_{i_0}\neq \beta^{r_{i_0}}$ and $k_0$ the smallest integer such that $\beta_{i_0,k_0}>k_0$. We write $\beta_{i_0,k_0}=k_0'+1$ with $k_0'\geq k_0$. Since the height of $\Sigma$ is $n+1$, we have $\beta_{i_0,k_0}-k_0\leq n+1$ so that $k_0'\leq n+k_0$. In order to apply the algorithm of Leclerc-Toffin, one must find the smallest integer $k$ such that there exists $1 \leq i \leq d$ and $l\leq k$ with $\beta_{i,l}=k+1$. Let us show that this integer is $k_0'$.
  
  Fix $k<k_0'$, $1\leq i \leq d$ and $l\leq k$. Suppose first that $i>i_0$. By definition of $i_0$, we have $\beta_{i,l}=l\neq k+1$. Now suppose that $i=i_0$. If $l<k_0$ then by definition of $k_0$ we have $\beta_{i_0,l}=l\neq k+1$. If $l\geq k_0$ then $\beta_{i_0,l}\geq \beta_{i_0,k_0} = k_0'+1 > k+1$. Finally, suppose that $i<i_0$. Since $l\leq k$, we have $l\leq k_0'+1 \leq k_0+n+1$. Since $\Sigma$ is of height $n+1$, if $l \leq r_i-(n+1-(\beta_{i_0,k_0}-k_0))$ we have $\beta_{i,l}=l$. But $r_{i}-(n+1) \geq r_{i+1} \geq r_{i_0}$ and therefore $r_i-(n+1-(\beta_{i_0,k_0}-k_0))\geq r_{i_0} + k_0' + 1 - k_0$. As obviously $k_0\leq r_{i_0}$ we obtain that $r_i-(n+1-(\beta_{i_0,k_0}-k_0))\geq k_0'+1\geq k$. Hence if $l\leq k$ then $\beta_{i,l}=l\neq k+1$.

 Therefore we obtain
  \[
    A_\Sigma = F_{k_0'}A_{\Sigma'},
  \]
  where $\Sigma'$ is the standard $d$-symbol obtained from $\Sigma$ by replacing only $\beta_{i_0,k_0}$ by $k_0'$. Then $\Sigma'$ is of height $n$, and the induction hypothesis shows that $A_{\Sigma'}=v_{\Sigma'}$.

  In order to conclude, it remains to show that $F_{k_0'}v_{\Sigma'}=v_{\Sigma}$. If $i>i_0$ then $\beta_i=\beta^{r_i}$ and since $k_0'\geq k_0> r_i$ neither $k_0$ nor $k_0'$ appear in $\beta_i$. If $i<i_0$, we have already shown that if $l \leq r_i-(n+1-(\beta_{i_0,k_0}-k_0))$ we have $\beta_{i,l}=l$ and that $r_i-(n+1-(\beta_{i_0,k_0}-k_0))\geq k_0'+1$ so that $\beta_{i,k_0'+1}=k_0'+1$ and $\beta_{i,k_0'}=k_0'$ and both $k_0'$ and $k_0'+1$ appear in $\beta_i$. Hence, from the definition of the action of $F_{k_0'}$ via the comultiplication, we find that $F_{k_0'}v_{\Sigma'}=v_{\Sigma}$.
\end{proof}

The following corollary translates Theorem \ref{thm:lm-asymptotic} in terms of Leclerc-Miyachi constructible characters for $G(d,1,n)$.

\begin{cor}
  \label{cor:lm-asymptotic}
  Let $\mathbf{r}=(r_1,\ldots,r_d)$ and $k\in\mathbb{N}$ and suppose that $r_{i}-r_{i+1}\geq n$ for all $1 \leq i \leq d-1$. Then the Leclerc-Miyachi $\mathbf{r}$-constructible characters for the group $G(d,1,n)$ are the irreducible characters.
\end{cor}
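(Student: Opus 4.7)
The plan is to read off the corollary directly from Theorem \ref{thm:lm-asymptotic} combined with the preceding Lemma, essentially by unwinding definitions. First I would invoke the Lemma preceding Theorem \ref{thm:lm-asymptotic} to conclude that under the asymptotic hypothesis $r_i-r_{i+1}\geq n$, every $d$-symbol of height at most $n$ is already standard. Hence the set of standard $d$-symbols of height $n$ coincides with the set of all $d$-symbols of height $n$, which via the bijection $\Sigma\mapsto(\lambda^{(1)},\ldots,\lambda^{(d)})$ recalled just before the definition of $\gamma_\Sigma$ is in bijection with the set of $d$-partitions of $n$, and hence with $\Irr(G(d,1,n))$.

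Second I would apply Theorem \ref{thm:lm-asymptotic} to get $b_\Sigma=v_\Sigma$ for every standard $d$-symbol $\Sigma$ of height $n$. By the definition of the Leclerc-Miyachi constructible character, writing $b_\Sigma=\sum_S a_\Sigma^S(q)v_S$ in the standard basis gives $a_\Sigma^S(q)=\delta_{S,\Sigma}$; specializing at $q=1$ yields
\[
\gamma_\Sigma=\sum_S a_\Sigma^S(1)\chi_S=\chi_\Sigma,
\]
which is the single irreducible character of $G(d,1,n)$ attached to the $d$-partition corresponding to $\Sigma$.

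Finally, letting $\Sigma$ range over all standard $d$-symbols of height $n$ (which under the asymptotic assumption is \emph{all} $d$-symbols of height $n$), the collection $\{\gamma_\Sigma\}_\Sigma$ runs through $\Irr(G(d,1,n))$ bijectively. There is no real obstacle here: all the work has been done in Theorem \ref{thm:lm-asymptotic}, where the nontrivial induction on $n$ using the Leclerc-Toffin algorithm shows that the intermediate basis element $A_\Sigma$ already equals $v_\Sigma$, forcing $b_\Sigma=v_\Sigma$. The corollary is then purely a translation from the language of canonical bases of $V(\Lambda_{\mathbf{r}})$ to the language of characters of $G(d,1,n)$.
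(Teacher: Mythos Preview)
Your proposal is correct and matches the paper's approach exactly: the paper presents this corollary as an immediate translation of Theorem~\ref{thm:lm-asymptotic} into the language of characters, without even writing out a proof. Your unwinding of the definitions (standardness via the preceding Lemma, $b_\Sigma=v_\Sigma$ via the Theorem, then $\gamma_\Sigma=\chi_\Sigma$ by specialization) is precisely the intended argument.
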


%%%%%%%%%%%%%%%    End of file     %%%%%%%%%%%%

%%% Local Variables:
%%% mode: latex
%%% TeX-master: "../constructibles_d12"
%%% End:

%%%%%%%%%%%%%%%%%%%%%%%%%%%%%%%%%%%%%%%%%%%%%%%
%                 		              %
%  Computations for G(d,1,2) and comparison   %
%                 			      %
%%%%%%%%%%%%%%%%%%%%%%%%%%%%%%%%%%%%%%%%%%%%%%%

\section{Computations for $G(d,1,2)$ and comparison}

In this section, we compute explicitly the set of $\mathbf{c}$-cellular characters for the group $G(d,1,2)$ for any choice of parameter $\mathbf{c}$. We also compute explicitly the Leclerc-Miyachi $\mathbf{r}$-constructible characters for any choice of parameter $\mathbf{r}$.

\subsection{On the Calogero-Moser side}

We will freely use the notations of Section \ref{sec:cm_cellular}, but simplify them in the special case of $G(d,1,2)$. For simplicity, we prefer to denote by $(x,y)$ the standard basis of $\mathbb{C}^2$ and by $(X,Y)$ its dual basis. Let $s$ be the reflection denoted by $s_{1,2,0}$ and $t$ be the reflection denoted by $\sigma_1$, so that $G(d,1,2)$ has the following presentation
\[
  \left\langle s,t\ \middle\vert\ s^2=1,t^d=1,stst=tsts\right\rangle.
\]
We also denote by $s_k$ the reflection $s_{1,2,k}$. There are $2d+\binom{d}{2}$ irreducible representations of $G(d,1,2)$, namely $\eta_i,\eta_i'$ of dimension $1$ for $1\leq i \leq d$ and $\rho_{i,j}$ of dimension $2$ for $1 \leq i < j \leq d$. Their respective characters are denoted by $\xi_i,\xi_i'$ and $\chi_{i,j}$ and the values of the representations on the generators are given in Table \ref{tbl:rep-d12}
\begin{table}[h!]
\centering
\begin{tabular}{c!{\vrule width 1.5pt}c|c}
                               & $s$ & $t$ \\\noalign{\hrule height 1.5pt}
$\eta_i, 1\leq i \leq d$       & $1$ & $\zeta^{i-1}$ \\\hline 
$\eta'_i, 1\leq i \leq d$      & $-1$ & $\zeta^{i-1}$\\\hline 
$\rho_{i,j}, 1\leq i<j \leq d$  & $\begin{pmatrix}0&1\\1&0\end{pmatrix}$ & $\begin{pmatrix}\zeta^{i-j}&0\\0&\zeta^{j-i}\end{pmatrix}$
\end{tabular}
 \caption{Action of $s$ and $t$ on irreducibles repesentations of $G(d,1,2)$}\label{tbl:rep-d12}
\end{table}

Finally we choose a parameter $\mathbf{c}\colon\Ref(G(d,1,2))\rightarrow \mathbb{C}$, define $k^{\#}_i=k_{1-i}$ and we again set $\zeta=\zeta_d$.

The Gaudin algebra $\Gau_{\mathbf{c}}$ over $\mathbb{C}[X,Y]$ is generated by the following two elements
\[
  \mathcal{D}_x=\sum_{k=0}^{d-1}c_k\zeta^k\frac{1}{X}\sigma_1^k - c_0\sum_{k=0}^{d-1}\frac{1}{X-\zeta^{k}Y}s_k\quad\text{and}\quad\mathcal{D}_y= \sum_{k=0}^{d-1}c_k\zeta^k\frac{1}{Y}\sigma_2^k + c_0\sum_{k=0}^{d-1}\frac{\zeta^k}{X-\zeta^{k}Y}s_k.
\]
Since $\mathbb{C}(X,Y)\Gau_{\mathbf{c}}\subset \mathbb{C}(V)G(d,1,2)$, every irreducible $\mathbb{C}(X,Y)\Gau_{\mathbf{c}}$-module appears in the restriction of an irreducible representation of $G(d,1,2)$ over $\mathbb{C}(X,Y)$. We then denote by $\mathcal{L}_i$ (resp. $\mathcal{L}'_i$, resp. $\mathcal{L}_{i,j}$) the restriction of $\mathbb{C}(V)\eta_i$ (resp. $\mathbb{C}(V)\eta'_i$, resp. $\mathbb{C}(V)\rho_{i,j}$) to $\mathbb{C}(X,Y)\Gau_{\mathbf{c}}$. The following easy lemma will be useful in the computations.

\begin{lem}
  \label{lem:rat_frac}
  In $\mathbb{C}(V)=\mathbb{C}(X,Y)$, for every $1\leq l \leq d$ we have
  \begin{equation}
  \sum_{k=0}^{d-1}\frac{\zeta^{kl}}{X-\zeta^{k}Y} = \frac{dX^{l-1}Y^{d-l}}{X^d-Y^d}.\label{eq:identity-frac}
\end{equation}
\end{lem}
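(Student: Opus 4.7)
The plan is to recognize the right-hand side as a proper rational function in $X$ over $\mathbb{C}(Y)$ with at most simple poles at the $d$ points $X = \zeta^k Y$ for $0 \le k \le d-1$, and to match its partial fraction decomposition with the sum on the left.

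First I would factor the denominator as $X^d - Y^d = \prod_{k=0}^{d-1}(X - \zeta^k Y)$. Since $1 \le l \le d$, the numerator $dX^{l-1}Y^{d-l}$ has degree $l-1 \le d-1$ in $X$, so the right-hand side is a proper rational function in $X$ and admits a partial fraction expansion of the form $\sum_{k=0}^{d-1} a_k/(X - \zeta^k Y)$ with $a_k \in \mathbb{C}(Y)$. Using $\left.\frac{d}{dX}(X^d - Y^d)\right|_{X = \zeta^k Y} = d(\zeta^k Y)^{d-1} = d\,\zeta^{-k}Y^{d-1}$ (because $\zeta^{kd}=1$), the residue at $X = \zeta^k Y$ is
\[
a_k = \frac{d(\zeta^k Y)^{l-1}Y^{d-l}}{d\,\zeta^{-k} Y^{d-1}} = \zeta^{k(l-1)} \cdot \zeta^{k} = \zeta^{kl},
\]
which is exactly the coefficient appearing on the left.

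The identity is elementary and I do not anticipate a genuine obstacle. The only subtlety is that the hypothesis $1 \le l \le d$ is used both to make $X^{l-1}$ and $Y^{d-l}$ genuine polynomials and to guarantee that the right-hand side is proper, so that its partial fraction decomposition has no polynomial part. An equivalent verification would be to clear denominators and check the polynomial identity
\[
\sum_{k=0}^{d-1}\zeta^{kl}\prod_{j\ne k}(X - \zeta^j Y) = dX^{l-1}Y^{d-l}
\]
by evaluating both sides at the $d$ specializations $X = \zeta^m Y$; on the left only the $k=m$ term survives, contributing $\zeta^{ml} \cdot d\,\zeta^{-m}Y^{d-1} = d\,\zeta^{m(l-1)}Y^{d-1}$, which coincides with the value $d\,\zeta^{m(l-1)}Y^{d-1}$ of the right-hand side at $X = \zeta^m Y$. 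Either route completes the proof in a few lines.
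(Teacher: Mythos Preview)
Your proof is correct. The paper does not give a proof of this lemma at all: it is stated as an ``easy lemma'' and left to the reader. Your partial-fraction argument (and the equivalent verification by evaluating the cleared identity at $X=\zeta^m Y$) is exactly the kind of routine check the author had in mind.
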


We also give two other generators of $\mathbb{C}(X,Y)\Gau_{\mathbf{c}}$, which differ from $\mathcal{D}_x$ and $\mathcal{D}_y$ by multiplication by a scalar:
\[
  \mathcal{D}'_x = \frac{X(X^d-Y^d)}{d}\mathcal{D}_x\quad\text{and}\quad\mathcal{D}'_y=\frac{Y(X^d-Y^d)}{d}\mathcal{D}_y.
\]

\begin{lem}
  \label{lem:restriction-gaudin}
  The actions of $\mathcal{D}'_x$ and $\mathcal{D}'_y$ on the restrictions of irreducible representations of $G(d,1,2)$ are given in the Table  \ref{tbl:action-gau}.
  \begin{table}[h!]
    \centering
    \begin{tabular}[h!]{c!{\vrule width 1.5pt}c|c}
        & $\mathcal{D}'_x$&$\mathcal{D}'_y$\\\noalign{\hrule height 1.5pt}
      $\mathcal{L}_i,\ 1\leq i \leq d$  & $(X^d-Y^d)k^{\#}_{i}-c_0X^d$ & $(X^d-Y^d)k^{\#}_{i}+c_0Y^d$\\\hline
      $\mathcal{L}'_i,\ 1\leq i \leq d$  & $(X^d-Y^d)k^{\#}_{i}+c_0X^d$ & $(X^d-Y^d)k^{\#}_{i}-c_0Y^d $\\\hline
      $\mathcal{L}_{i,j},\ 1\leq i<j\leq d$ & $\left(\begin{smallmatrix} (X^d-Y^d)k^{\#}_{i} & -c_0X^{d-(j-i)}Y^{j-i} \\ -c_0 X^{j-i}Y^{d-(j-i)} & (X^d-Y^d)k^{\#}_{j}\end{smallmatrix}\right)$ & $\left(\begin{smallmatrix} (X^d-Y^d)k^{\#}_{j} & c_0X^{d-(j-i)}Y^{j-i} \\ c_0 X^{j-i}Y^{d-(j-i)} & (X^d-Y^d)k^{\#}_{i}\end{smallmatrix}\right)$
    \end{tabular}
  \caption{Actions of $\mathcal{D}'_x$ and $\mathcal{D}'_y$}\label{tbl:action-gau}
  \end{table}
\end{lem}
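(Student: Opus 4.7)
The plan is a direct calculation in $\mathbb{C}(X,Y)G(d,1,2)$, carried out in parallel for each of the three families of irreducible representations. I would combine three ingredients: the explicit action of the generators $\sigma_1,\sigma_2,s$ recorded in Table~\ref{tbl:rep-d12}, extended to $\sigma_i^k$ and to $s_k=\sigma_1^k s\sigma_1^{-k}$; the inversion formula $c_k=\sum_{l=0}^{d-1}\zeta^{k(l-1)}k_l$ from Section~1, which combined with orthogonality of characters of $\mu_d$ collapses every sum $\sum_k c_k\zeta^k\sigma_i^k$ acting on a $\sigma_i$-eigenvector with eigenvalue $\zeta^m$ into the single scalar $dk_{-m}=dk^{\#}_{m+1}$; and the partial-fraction identity of Lemma~\ref{lem:rat_frac}, which converts every sum $\sum_k\zeta^{kl}/(X-\zeta^kY)$ into $dX^{l-1}Y^{d-l}/(X^d-Y^d)$. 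The prefactors $X(X^d-Y^d)/d$ and $Y(X^d-Y^d)/d$ in the definitions of $\mathcal{D}'_x,\mathcal{D}'_y$ are designed precisely to absorb the $1/X$ or $1/Y$ of the Cartan part and the $X^d-Y^d$ denominator produced by Lemma~\ref{lem:rat_frac}.

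Concretely, for the one-dimensional $\eta_i$ and $\eta'_i$ each reflection $s_k$ acts as $\pm 1$, so the reflection parts reduce to $\mp c_0\sum_k 1/(X-\zeta^kY)$ for $\mathcal{D}_x$ and to $\pm c_0\sum_k\zeta^k/(X-\zeta^kY)$ for $\mathcal{D}_y$; applying Lemma~\ref{lem:rat_frac} with $l=d$ and $l=1$ respectively and multiplying by the appropriate prefactor produces the monomials $\mp c_0X^d$ and $\pm c_0Y^d$ appearing in the first two rows of Table~\ref{tbl:action-gau}, and the Cartan computation above contributes the diagonal scalar $(X^d-Y^d)k^{\#}_i$. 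For the two-dimensional $\rho_{i,j}$ a short calculation gives $s_kv_1=\zeta^{k(j-i)}v_2$ and $s_kv_2=\zeta^{-k(j-i)}v_1$, so that the reflection part acting on $v_1$ involves $\sum_k\zeta^{k(j-i)}/(X-\zeta^kY)$ and $\sum_k\zeta^{k(1+j-i)}/(X-\zeta^kY)$, while on $v_2$, after rewriting $\zeta^{-k(j-i)}=\zeta^{k(d-(j-i))}$, it involves the analogous sums with exponents $d-(j-i)$ and $d-(j-i)+1$. Lemma~\ref{lem:rat_frac} then yields exactly the off-diagonal monomials $\pm c_0X^{j-i}Y^{d-(j-i)}$ and $\pm c_0X^{d-(j-i)}Y^{j-i}$ of the bottom row; the swap between $\mathcal{D}'_x$ and $\mathcal{D}'_y$ on the diagonal comes from $\sigma_2=s\sigma_1 s$, which simply exchanges the two $\sigma_1$-eigenvalues in the Cartan computation.

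There is no genuine mathematical obstacle, but the point requiring vigilance is that Lemma~\ref{lem:rat_frac} is stated only for $1\leq l\leq d$: whenever a naive exponent $1+(j-i)$ exceeds $d$ or a negative exponent $-k(j-i)$ appears, one must reduce modulo $d$ into the allowed range before applying the identity. This reduction is precisely what produces the symmetric pair of exponents $(j-i,\,d-(j-i))$ on the two off-diagonal positions of each $2\times 2$ matrix attached to $\rho_{i,j}$; once it is tracked consistently across the four sub-cases, the entries of Table~\ref{tbl:action-gau} can be read off.
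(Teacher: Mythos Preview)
Your proposal is correct and follows essentially the same route as the paper's proof: a direct computation using the explicit matrices of $\sigma_1,\sigma_2,s_k$ on each irreducible, the identity of Lemma~\ref{lem:rat_frac} for the reflection sums, and the definition of $k^{\#}_i$ (equivalently, the inversion formula for $c_k$) for the Cartan sums. The paper carries out one sample computation per family and defers the rest to ``similar computations'', whereas you spell out all sub-cases; the ingredients and logic are identical. Your caution about the range $1\leq l\leq d$ in Lemma~\ref{lem:rat_frac} is appropriate, though once $\zeta^{-k(j-i)}$ is rewritten as $\zeta^{k(d-(j-i))}$ all four exponents $j-i$, $1+(j-i)$, $d-(j-i)$, $d-(j-i)+1$ already lie in $\{1,\ldots,d\}$, so no further reduction is needed.
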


\begin{proof}
  Lets us start with the action of $\mathcal{D}'_x$ on $\mathcal{L}_i$. It is given by
\[
  \eta_i(\mathcal{D}'_x) = \frac{X(X^d-Y^d)}{d}\left(\sum_{r=1}^d\frac{c_t\zeta^{ri}}{X}-c_0\sum_{k=0}^{d-1}\frac{1}{X-\zeta^kY}\right) = (X^d-Y^d) k^{\#}_{i} - c_0X^d,
\]
the last equality following from the definition $k^{\#}_{i}$ and from Lemma \ref{lem:rat_frac}. 

Similar computations can be made for the action of $\mathcal{D}'_y$, and for the representation $\mathcal{L}_i'$.

Let $1 \leq i < j \leq n$ and we compute the action of $\mathcal{D}'_x$ on $\mathcal{L}_{i,j}$ :
\begin{align*}
  \rho_{i,j}(\mathcal{D}'_x) &= \frac{X^d-Y^d}{d}\sum_{r=1}^{d-1} c_r\begin{pmatrix}\zeta^{ri}&0\\0&\zeta^{rj}\end{pmatrix} - \frac{X(X^d-Y^d)}{d}c_0\sum_{k=0}^{d-1}\frac{1}{X-\zeta^kY}\begin{pmatrix}0& \zeta^{k(d-(j-i))} \\ \zeta^{k(j-i)} & 0\end{pmatrix}\\
&=\begin{pmatrix} (X^d-Y^d)k^{\#}_{i} & -c_0X^{d-(j-i)}Y^{j-i} \\ -c_0 X^{j-i}Y^{d-(j-i)} & (X^d-Y^d)k^{\#}_{j}\end{pmatrix},
\end{align*}
using again the definition of $k^{\#}_{i}$ and $k^{\#}_{j}$ and Lemma \ref{lem:rat_frac}. The action of $\mathcal{D}'_y$ is obtained by a similar argument.
\end{proof}

The $2$-dimensional representations $\mathcal{L}_{i,j}$ have different behaviour depending on the parameter $\mathbf{c}$.

\subsubsection{When $c_0=0$}

In this subsection only, we suppose that $c_0=0$. The matrices giving the action of $\mathcal{D}_x'$ and $\mathcal{D}_y'$ are all diagonal and we readily see that $\mathcal{L}_i\simeq \mathcal{L}'_i$ and that $\mathcal{L}_{i,j}\simeq\tilde{\mathcal{L}}_{i,j}\oplus \tilde{\mathcal{L}}_{j,i}$, where $\tilde{\mathcal{L}}_{i,j}$ is the $1$-dimensional representation where $\mathcal{D}'_x$ acts by $(X^d-Y^d)k^{\#}_{i}$ and $\mathcal{D}'_y$ by $(X^d-Y^d)k^{\#}_{j}$. Notice that with this notation, the module $\tilde{\mathcal{L}}_{i,i}$ is nothing else than $\mathcal{L}_i$.

Moreover, we have an isomorphism between $\tilde{\mathcal{L}}_{i,j}$ and $\tilde{\mathcal{L}}_{p,q}$ if and only if $k^{\#}_{i}=k^{\#}_{p}$ and $k^{\#}_{j}=k^{\#}_{q}$. We therefore define an equivalence relation on the set $\{1,\ldots,d\}$ by $i\sim j$ if and only if $k^{\#}_{i}=k^{\#}_{j}$. Simple $\mathbb{C}(X,Y)\Gau_{\mathbf{c}}$-modules are then parameterized by pairs of equivalence class for $\sim$: a representative of the class $\mathcal{L}_{\mathcal{O},\mathcal{O}'}$ labeled by $\mathcal{O}$ and $\mathcal{O}'$ is $\tilde{\mathcal{L}}_{i,j}$, where $i\in\mathcal{O}$ and $j\in\mathcal{O}'$.

From the above description of restrictions of representations of $G(d,1,2)$ to $\mathbb{C}(X,Y)\Gau_{\mathbf{c}}$, we obtain:

\begin{prop}
  Let $\mathcal{O}$ and $\mathcal{O'}$ be two equivalence classes for $\sim$.
  
  The $\mathbf{c}$-cellular character corresponding to the class $\mathcal{L}_{\mathcal{O},\mathcal{O}}$ is
  \[
    \gamma_{\mathcal{O},\mathcal{O}}=\sum_{i\in\mathcal{O}}(\chi_i+\chi'_i)+\sum_{\substack{i,j\in \mathcal{O}\\i<j}}2\chi_{i,j}.
  \]

  The $\mathbf{c}$-cellular character corresponding to the class $\mathcal{L}_{\mathcal{O},\mathcal{O}'}$ is
  \[
    \gamma_{\mathcal{O},\mathcal{O}'}=\sum_{\substack{i\in\mathcal{O},j\in \mathcal{O}'\\i<j}}\chi_{i,j}+\sum_{\substack{i\in\mathcal{O},j\in \mathcal{O}'\\i>j}}\chi_{j,i}.
  \]

  By definition, every $\mathbf{c}$-cellular character is equal to one of those above.
\end{prop}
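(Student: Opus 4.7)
The plan is to apply the definition of $\mathbf{c}$-cellular character directly, reading off the multiplicities from the module-theoretic analysis carried out just before the statement. Concretely, $\gamma_L^{\Gau_{\mathbf{c}}(W)} = \sum_\chi [\Res_{\mathbb{C}(V)\Gau_{\mathbf{c}}}^{\mathbb{C}(V)W}(\mathbb{C}(V)V_\chi) : L]\,\chi$, so for each class $\mathcal{L}_{\mathcal{O},\mathcal{O}'}$ I only need to count how often a representative appears inside each of the restrictions $\mathcal{L}_i$, $\mathcal{L}'_i$, $\mathcal{L}_{i,j}$. The inputs are already at hand: Lemma \ref{lem:restriction-gaudin} (with $c_0=0$, so the matrices become diagonal) gives $\mathcal{L}_i \simeq \mathcal{L}'_i \simeq \tilde{\mathcal{L}}_{i,i}$ and $\mathcal{L}_{i,j} \simeq \tilde{\mathcal{L}}_{i,j}\oplus\tilde{\mathcal{L}}_{j,i}$, and the discussion above the statement identifies $\tilde{\mathcal{L}}_{i,j} \simeq \tilde{\mathcal{L}}_{p,q}$ with $i\sim p$, $j\sim q$, so that $\tilde{\mathcal{L}}_{i,j}$ represents the class $\mathcal{L}_{[i],[j]}$.

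First, I would fix classes $\mathcal{O}$ and $\mathcal{O}'$ and determine when a given restriction contributes to $\gamma_{\mathcal{O},\mathcal{O}'}$. For the one-dimensional representations $\eta_i$ and $\eta'_i$, the restriction equals $\tilde{\mathcal{L}}_{i,i}$, which represents the class $\mathcal{L}_{[i],[i]}$; hence $\xi_i$ and $\xi'_i$ contribute with coefficient $1$ to $\gamma_{\mathcal{O},\mathcal{O}}$ exactly when $i\in\mathcal{O}$, and never contribute to $\gamma_{\mathcal{O},\mathcal{O}'}$ when $\mathcal{O}\neq\mathcal{O}'$. For the two-dimensional representation $\rho_{i,j}$ (with $i<j$), each of the two summands $\tilde{\mathcal{L}}_{i,j}$ and $\tilde{\mathcal{L}}_{j,i}$ must be tested against $\mathcal{L}_{\mathcal{O},\mathcal{O}'}$. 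In the diagonal case $\mathcal{O}=\mathcal{O}'$, both summands belong to $\mathcal{L}_{\mathcal{O},\mathcal{O}}$ exactly when $\{i,j\}\subset\mathcal{O}$, producing the coefficient $2$; in the off-diagonal case $\mathcal{O}\neq\mathcal{O}'$, at most one summand lies in the class, which recovers the two displayed sums in the statement (the first from $\tilde{\mathcal{L}}_{i,j}$ when $i\in\mathcal{O}$, $j\in\mathcal{O}'$, $i<j$; the second from $\tilde{\mathcal{L}}_{j,i}$ when $j\in\mathcal{O}$, $i\in\mathcal{O}'$, $i<j$, rewritten by swapping indices).

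Finally, the completeness statement follows because every simple $\mathbb{C}(X,Y)\Gau_{\mathbf{c}}$-module occurs in the restriction of some irreducible $\mathbb{C}(V)G(d,1,2)$-module (since $\mathbb{C}(X,Y)\Gau_{\mathbf{c}}\subset\mathbb{C}(V)G(d,1,2)$), and the $\tilde{\mathcal{L}}_{i,j}$ for $1\leq i,j\leq d$ cover all such simples; thus every cellular character is $\gamma_{\mathcal{O},\mathcal{O}'}$ for some pair of classes. There is no genuine obstacle here: once Lemma \ref{lem:restriction-gaudin} is in place, the proof is a bookkeeping exercise in multiplicities, and the only point meriting a line of justification is the clean splitting $\mathcal{L}_{i,j}\simeq\tilde{\mathcal{L}}_{i,j}\oplus\tilde{\mathcal{L}}_{j,i}$, which in turn follows from the simultaneous diagonalization of $\mathcal{D}'_x$ and $\mathcal{D}'_y$ on $\mathbb{C}(V)\rho_{i,j}$ when $c_0=0$.
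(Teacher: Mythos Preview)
Your proposal is correct and follows exactly the same approach as the paper, which simply says ``From the above description of restrictions of representations of $G(d,1,2)$ to $\mathbb{C}(X,Y)\Gau_{\mathbf{c}}$, we obtain'' and states the proposition without further argument. You have spelled out the multiplicity bookkeeping that the paper leaves implicit, and your reasoning is accurate throughout.
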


\subsubsection{When $c_0\neq 0$}
\label{sec:c0<>0}

In this subsection only, we suppose that $c_0\neq 0$. The matrices giving the action of $\mathcal{D}'_x$ and $\mathcal{D}'_y$ on the representation $\mathcal{L}_{i,j}$ are not diagonal, but these representations can still have an invariant one-dimensional subspace.

\begin{lem}
  Depending on the values of the parameter $\mathbf{c}$, the representation $\mathcal{L}_{i,j}$ is reducible if and only if one of the following is true:
  \begin{itemize}
    \item if $k^{\#}_i=k^{\#}_j$ and $d$ is even then $\mathcal{L}_{i,j}$ is isomorphic to $\mathcal{L}_{i,j}^+\oplus \mathcal{L}_{i,j}^-$, where $\mathcal{L}_{i,j}^+$ and $\mathcal{L}_{i,j}^-$ are two non-isomorphic one-dimensional representations, which are not isomorphic to some $\mathcal{L}_k$ or $\mathcal{L}'_k$,
  \item if $k^{\#}_i-k^{\#}_j=c_0$ then $\mathcal{L}_{i,j}\simeq \mathcal{L}_i \oplus \mathcal{L}_j'$,
  \item if $k^{\#}_i-k^{\#}_j=-c_0$ then $\mathcal{L}_{i,j}\simeq \mathcal{L}_i' \oplus \mathcal{L}_j$.
  \end{itemize}
\end{lem}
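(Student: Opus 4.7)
The plan is to reduce the reducibility of $\mathcal{L}_{i,j}$ to a single linear-algebra check on one $2\times 2$ matrix, and then to decide case-by-case when that check holds. First, I would observe directly from Table~\ref{tbl:action-gau} that
\[
\rho_{i,j}(\mathcal{D}'_x) + \rho_{i,j}(\mathcal{D}'_y) = (X^d - Y^d)(k^{\#}_i + k^{\#}_j)\,\id,
\]
so any subspace stable under $M_x := \rho_{i,j}(\mathcal{D}'_x)$ is automatically stable under $\rho_{i,j}(\mathcal{D}'_y)$. Since $\mathcal{D}'_x$ and $\mathcal{D}'_y$ generate $\mathbb{C}(X,Y)\Gau_{\mathbf{c}}$, the module $\mathcal{L}_{i,j}$ is reducible exactly when $M_x$ admits an eigenvector in $\mathbb{C}(X,Y)^2$, i.e.\ when the discriminant $\Delta$ of the characteristic polynomial of $M_x$ is a square in $\mathbb{C}(X,Y)$. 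A direct computation from Table~\ref{tbl:action-gau} gives
\[
\Delta = \alpha^2 (X^d - Y^d)^2 + 4 c_0^2 X^d Y^d, \qquad \alpha := k^{\#}_i - k^{\#}_j.
\]

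Next, I would analyse when $\Delta$ is a square in $\mathbb{C}(X,Y)$; since $\mathbb{C}[X,Y]$ is a UFD, this is equivalent to $\Delta$ being a constant multiple of a square in $\mathbb{C}[X,Y]$. Assuming $\alpha \neq 0$, I would view $\Delta$ as a quadratic in $X^d$ whose roots are of the form $c_{\pm} Y^d$ with $c_+ c_- = 1$, yielding $\Delta = \alpha^2(X^d - c_+ Y^d)(X^d - c_- Y^d)$. The linear factors of $X^d - c Y^d$ and $X^d - c'Y^d$ in $\mathbb{C}[X,Y]$ are pairwise disjoint whenever $c \neq c'$, so squareness forces $c_+ = c_-$ and hence $c_{\pm} = \pm 1$. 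Plugging back rules out $c_{\pm} = +1$ (it would force $c_0 = 0$) and identifies $c_{\pm} = -1$ with $\alpha^2 = c_0^2$, giving $\Delta = c_0^2(X^d + Y^d)^2$. In the remaining case $\alpha = 0$, $\Delta = 4 c_0^2 X^d Y^d$ is a square in $\mathbb{C}(X,Y)$ exactly when $d$ is even; these are the three cases of the statement.

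Finally, in each reducibility case I would read off the two eigenvalues of $M_x$ and compare them with Table~\ref{tbl:action-gau} to identify the one-dimensional summands. For $\alpha = 0$ and $d$ even, the eigenvalues are $(X^d - Y^d)k^{\#}_i \pm c_0 X^{d/2} Y^{d/2}$; the monomial $X^{d/2}Y^{d/2}$ both distinguishes the two summands and rules out isomorphism with any $\mathcal{L}_k$ or $\mathcal{L}'_k$, whose eigenvalues from Table~\ref{tbl:action-gau} are polynomial in $X^d, Y^d$ alone. For $\alpha = \pm c_0$, a brief manipulation using $k^{\#}_i - k^{\#}_j = \pm c_0$ rewrites the eigenvalues as $k^{\#}_i X^d - k^{\#}_j Y^d$ and $k^{\#}_j X^d - k^{\#}_i Y^d$, which match exactly the eigenvalues recorded for $\mathcal{L}'_j, \mathcal{L}_i$ (respectively $\mathcal{L}_j, \mathcal{L}'_i$).

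The step I expect to be the main obstacle is the square-check itself: it would be tempting to work only over the subfield $\mathbb{C}(X^d, Y^d)$, but $\mathbb{C}(X,Y)$ contains a square root of $X^d Y^d$ when $d$ is even, and this extra square root produces the exceptional first case. Keeping track of the parity of $d$, and verifying that no other value of $\alpha$ accidentally makes $\Delta$ a square, is the only genuinely non-routine step; everything else is bookkeeping with the matrices from Table~\ref{tbl:action-gau}.
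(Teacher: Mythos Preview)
Your argument is correct and follows essentially the same route as the paper: compute the discriminant of the characteristic polynomial of $\rho_{i,j}(\mathcal{D}'_x)$, decide when it is a square in $\mathbb{C}(X,Y)$, and then read off the one-dimensional summands from the eigenvalues. Your observation that $\rho_{i,j}(\mathcal{D}'_x)+\rho_{i,j}(\mathcal{D}'_y)$ is a scalar is a clean shortcut the paper does not make explicit---the paper instead notes the two matrices share a characteristic polynomial and then verifies by hand that the exhibited eigenvectors are common to both---and your UFD factorisation of $\Delta$ is more detailed than the paper's one-line claim, but the overall strategy is the same.
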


\begin{proof}
  We diagonalize the matrices $\rho_{i,j}(\mathcal{D}'_x)$ and $\rho_{i,j}(\mathcal{D}'_y)$. Note that these two matrices have the same trace and determinant, and therefore the same characteristic polynomial equal to
  \[
    \mathbf{t}^2-(X^d-Y^d)(k^{\#}_i+k^{\#}_j)\mathbf{t}+(X^d-Y^d)^2k^{\#}_ik^{\#}_j-c_0^2X^dY^d.
  \]
  This polynomial is split in $\mathbb{C}(X,Y)$ if and only if its discriminant
  \begin{multline*}
      (X^d-Y^d)^2(k^{\#}_i+k^{\#}_j)^2-4((X^d-Y^d)^2k^{\#}_ik^{\#}_j-c_0^2X^dY^d) =\\
    (k^{\#}_i-k^{\#}_j)^2X^{2d}+2(2c_0^2-(k^{\#}_i-k^{\#}_j)^2)X^dY^d+(k_i-k_j)Y^{2d}
  \end{multline*}
  is a square in $\mathbb{C}(X,Y)$. This homogeneous polynomial is then a square in $\mathbb{C}(X,Y)$ if and only if $d$ is even and $k^{\#}_i=k^{\#}_j$ or $c_0^2=(k^{\#}_i-k^{\#}_j)^2$.

  Suppose first that $k^{\#}_i=k^{\#}_j$ and $d$ is even. One check that the vectors
  \[
    \begin{pmatrix}
      X^{d/2-(j-i)}\\
      -Y^{d/2-(j-i)}
    \end{pmatrix}
    \quad\text{and}\quad
    \begin{pmatrix}
      X^{d/2-(j-i)}\\
      Y^{d/2-(j-i)}
    \end{pmatrix}
  \]
  are common eigenvectors for $\rho_{i,j}(\mathcal{D}'_x)$ and $\rho_{i,j}(\mathcal{D}'_y)$ with respective eigenvalues
  \[
    (X^d-Y^d)k^{\#}_i+c_0X^{d/2}Y^{d/2}\quad\text{and}\quad (X^d-Y^d)k^{\#}_i-c_0X^{d/2}Y^{d/2}
  \]
  for $\rho_{i,j}(\mathcal{D}'_x)$ and
  \[
    (X^d-Y^d)k^{\#}_i-c_0X^{d/2}Y^{d/2}\quad\text{and}\quad (X^d-Y^d)k^{\#}_i+c_0X^{d/2}Y^{d/2}
  \]
  for $\rho_{i,j}(\mathcal{D}'_y)$. This shows the first assertion of the lemma.

  Now, suppose that $k^{\#}_i-k^{\#}_j=c_0$. We check that
  \[
    \begin{pmatrix}
      Y^{j-i}\\
      X^{j-i}
    \end{pmatrix}
    \quad\text{and}\quad
    \begin{pmatrix}
      X^{d-(j-i)}\\
      -Y^{d-(j-i)}
    \end{pmatrix}
  \]
  are common eigenvectors for $\rho_{i,j}(\mathcal{D}'_x)$ and $\rho_{i,j}(\mathcal{D}'_y)$ with respective eigenvalues
  \[
    k_j^{\#}X^d-k_i^{\#}Y^d\quad\text{and}\quad k_i^{\#}X^d-k_j^{\#}Y^d
  \]
  for $\rho_{i,j}(\mathcal{D}'_x)$ and
  \[
    k_i^{\#}X^d-k_j^{\#}Y^d\quad\text{and}\quad k_j^{\#}X^d-k_i^{\#}Y^d
  \]
  for $\rho_{i,j}(\mathcal{D}'_y)$. Moreover, using the equality $c_0=k^{\#}_i-k^{\#}_j$, it is easy to see that this gives an isomorphism $\mathcal{L}_{i,j}\simeq \mathcal{L}_i\oplus \mathcal{L}'_j$.

  If $k^{\#}_i-k^{\#}_j=-c_0$, a similar argument shows that $\mathcal{L}_{i,j}\simeq \mathcal{L}'_i\oplus \mathcal{L}_j$, which ends the proof of the lemma.
\end{proof}

We now have a complete description of simple $\mathbb{C}(X,Y)\Gau_{\mathbf{c}}$-modules, and of the restrictions of the representations of $G(d,1,2)$ to $\mathbb{C}(X,Y)\Gau_{\mathbf{c}}$. The only isomorphism between simple modules are the following:
\begin{itemize}
\item if $k_i^{\#}=k_j^{\#}$ then $\mathcal{L}_i\simeq \mathcal{L}_j$ and $\mathcal{L}'_i\simeq \mathcal{L}'_j$,
\item if $c_0^2 \neq (k_i^{\#}-k_j^{\#})^2$, $c_0^2 \neq (k_p^{\#}-k_q^{\#})^2$, $k_i^{\#}=k_p^{\#}$ and $k_j^{\#}=k_q^{\#}$ then $\mathcal{L}_{i,j}\simeq \mathcal{L}_{p,q}$,
\item if $d$ is even and $k_i^{\#}=k_j^{\#}=k_p^{\#}=k_q^{\#}$ then $\mathcal{L}^+_{i,j}\simeq \mathcal{L}^+_{p,q}$ and $\mathcal{L}^-_{i,j}\simeq \mathcal{L}^-_{p,q}$.
\end{itemize}

We again define an equivalence relation on the set $\{1,\ldots,d\}$ by $i\sim j$ if and only if $k^{\#}_{i}=k^{\#}_{j}$. We can parameterize the classes of simple modules using the equivalence classes of $\sim$:
\begin{itemize}
\item for $\mathcal{O}$ an equivalence class, we have two classes $\mathcal{L}_{\mathcal{O}}$ and $\mathcal{L}_{\mathcal{O}'}$ with a respective representative $\mathcal{L}_i$ and $\mathcal{L}'_i$ for $i\in\mathcal{O}$.
\item for $\mathcal{O}$ and $\mathcal{O}'$ two equivalence classes such that $c_0^2\neq (k_{i}^{\#}-k_j^{\#})^2$ for any $i\in\mathcal{O}$ and $j\in\mathcal{O}'$, the classification depends moreover on the parity of $d$:
  \begin{enumerate}
  \item if $d$ is odd, we have one class of simple modules $\mathcal{L}_{\mathcal{O},\mathcal{O'}}$ with representative $\mathcal{L}_{i,j}$ with $i\in\mathcal{O}$, $j\in\mathcal{O}'$ and  $i<j$ (or $\mathcal{L}_{j,i}$ with $i\in\mathcal{O}$, $j\in\mathcal{O}'$ and $j<i$),
  \item if $d$ is even and $\mathcal{O}\neq \mathcal{O}'$, we have one class $\mathcal{L}_{\mathcal{O},\mathcal{O}'}$ with representative $\mathcal{L}_{i,j}$ with $i\in\mathcal{O}$, $j\in\mathcal{O}'$ and  $i<j$ (or $\mathcal{L}_{j,i}$ with $i\in\mathcal{O}$, $j\in\mathcal{O}'$ and $j<i$),
  \item if $d$ is even, $\mathcal{O}=\mathcal{O}'$ and $\lvert \mathcal{O}\rvert \geq 2$, we have two classes $\mathcal{L}_{\mathcal{O},\mathcal{O}'}^+$ and $\mathcal{L}_{\mathcal{O},\mathcal{O}'}^-$ with respective representatives $\mathcal{L}_{i,j}^+$ and $\mathcal{L}_{i,j}^-$ for $i,j\in\mathcal{O}$ and $i<j$.
  \end{enumerate}
\end{itemize}

Note that the classes $\mathcal{L}_{\mathcal{O},\mathcal{O}}$, $\mathcal{L}^+_{\mathcal{O},\mathcal{O}}$ and $\mathcal{L}^-_{\mathcal{O},\mathcal{O}}$ only exist if $\lvert\mathcal{O}\rvert \geq 2$. From the above description of restrictions of representations of $G(d,1,2)$ to $\mathbb{C}(X,Y)\Gau_{\mathbf{c}}$, we obtain:

\begin{prop}
  \label{prop:cm-cell-d12-c0<>0}
  Let $\mathcal{O}$ and $\mathcal{O'}$ be two equivalence classes for $\sim$.

  The $\mathbf{c}$-cellular character corresponding to the class $\mathcal{L}_{\mathcal{O}}$ is
  \[
    \gamma_{\mathcal{O}} = \sum_{i\in\mathcal{O}}\left(\chi_i+\sum_{j\in\mathcal{O}',i<j}\chi_{i,j}+\sum_{j\in\mathcal{O'},j<i}\chi_{j,i}\right),
  \]
  where $\mathcal{O}'$ is an equivalence class for $\sim$ such that $k_i^{\#}-k_j^{\#}=c_0$ (if such a class exists, it is unique).
  
  The $\mathbf{c}$-cellular character corresponding to the class $\mathcal{L}'_{\mathcal{O}}$ is
  \[
    \gamma'_{\mathcal{O}} = \sum_{i\in\mathcal{O}}\left(\chi'_i+\sum_{j\in\mathcal{O}',i<j}\chi_{i,j}+\sum_{j\in\mathcal{O'},j<i}\chi_{j,i}\right),
  \]
  where $\mathcal{O}'$ is an equivalence class for $\sim$ such that $k_i^{\#}-k_j^{\#}=-c_0$ (if such a class exists, it is unique).
  
  If $c_0^2\neq (k_{i}^{\#}-k_j^{\#})^2$, the $\mathbf{c}$-cellular character corresponding to the class $\mathcal{L}_{\mathcal{O},\mathcal{O}'}$ is
  \[
    \gamma_{\mathcal{O},\mathcal{O}'}=\sum_{\substack{i\in\mathcal{O},j\in\mathcal{O}'\\i<j}}\chi_{i,j}+\sum_{\substack{i\in\mathcal{O},j\in\mathcal{O}'\\j<i}}\chi_{j,i}.
  \]
  
  The $\mathbf{c}$-cellular character corresponding to the classes $\mathcal{L}_{\mathcal{O},\mathcal{O}}$, $\mathcal{L}^+_{\mathcal{O},\mathcal{O}}$ and $\mathcal{L}^-_{\mathcal{O},\mathcal{O}}$ is
  \[
    \gamma_{\mathcal{O},\mathcal{O}}=\sum_{\substack{i,j\in\mathcal{O}\\i<j}}\chi_{i,j}.
  \]

   By definition, every $\mathbf{c}$-cellular character is equal to one of those above.
\end{prop}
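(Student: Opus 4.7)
The plan is to apply the definition of cellular character directly, leveraging the classification of simple $\mathbb{C}(X,Y)\Gau_{\mathbf{c}}$-modules and the explicit decompositions of the restrictions $\mathcal{L}_i$, $\mathcal{L}'_i$, $\mathcal{L}_{i,j}$ established in the preceding lemma. For each isomorphism class $[L]$ listed in the enumeration above, it suffices to read off the multiplicity $[\Res(V_\chi)\colon L]$ for every irreducible representation $V_\chi$ of $G(d,1,2)$ and sum the corresponding characters.

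First I would compute $\gamma_\mathcal{O}$. The class $\mathcal{L}_\mathcal{O}$ appears in $\mathcal{L}_i$ for every $i\in\mathcal{O}$, contributing the terms $\chi_i$. It can appear inside a two-dimensional $\mathcal{L}_{p,q}$ with $p<q$ only when that module is reducible, and the preceding lemma shows there are two mutually exclusive such situations: $\mathcal{L}_{p,q}\simeq\mathcal{L}_p\oplus\mathcal{L}'_q$ when $k^{\#}_p-k^{\#}_q=c_0$, and $\mathcal{L}_{p,q}\simeq\mathcal{L}'_p\oplus\mathcal{L}_q$ when $k^{\#}_p-k^{\#}_q=-c_0$. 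In each of these, exactly one summand is of type $[\mathcal{L}_\bullet]$; requiring it to lie in $[\mathcal{L}_\mathcal{O}]$ forces its index into $\mathcal{O}$ and its partner into the (necessarily unique, if it exists) class $\mathcal{O}'$ satisfying $k^{\#}_i-k^{\#}_j=c_0$ for $i\in\mathcal{O}$, $j\in\mathcal{O}'$. Separating the contributions according to whether $i<j$ or $j<i$ then produces the two inner sums. The formula for $\gamma'_\mathcal{O}$ follows by the same argument with $c_0$ replaced by $-c_0$.

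For the class $\mathcal{L}_{\mathcal{O},\mathcal{O}'}$ with $c_0^2\neq(k^{\#}_i-k^{\#}_j)^2$, the restriction $\mathcal{L}_{p,q}$ remains irreducible of dimension two whenever $p\in\mathcal{O}$ and $q\in\mathcal{O}'$ (or vice versa), while no one-dimensional $\mathcal{L}_i$ or $\mathcal{L}'_i$ can contribute, which yields the stated double sum. For the split classes $\mathcal{L}^{\pm}_{\mathcal{O},\mathcal{O}}$ appearing when $d$ is even and $|\mathcal{O}|\geq 2$, the lemma gives $\mathcal{L}_{i,j}\simeq\mathcal{L}^+_{i,j}\oplus\mathcal{L}^-_{i,j}$ for $i,j\in\mathcal{O}$ with $i<j$, so each $\rho_{i,j}$ contributes $\chi_{i,j}$ to both cellular characters; these consequently coincide with the character attached to $\mathcal{L}_{\mathcal{O},\mathcal{O}}$ in the cases ($d$ odd, or $\mathcal{L}_{i,j}$ irreducible) where the two-dimensional class does not split. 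The main potential obstacle is pure bookkeeping: one must respect the convention $i<j$ in the definition of $\rho_{i,j}$ when enumerating pairs that straddle two (possibly equal) classes $\mathcal{O}$ and $\mathcal{O}'$, which is precisely why the formulas for $\gamma_\mathcal{O}$ and $\gamma'_\mathcal{O}$ split into two sums. Beyond this, the proposition is a direct corollary of the lemma.
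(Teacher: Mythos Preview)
Your proposal is correct and follows the same approach as the paper, which simply states that the proposition is obtained ``from the above description of restrictions of representations of $G(d,1,2)$ to $\mathbb{C}(X,Y)\Gau_{\mathbf{c}}$''. You have spelled out the bookkeeping that the paper leaves implicit, correctly tracing through the lemma's decompositions to read off the multiplicities, including the observation that the two reducibility cases $k^{\#}_p-k^{\#}_q=\pm c_0$ both feed into the \emph{same} auxiliary class $\mathcal{O}'$ (determined by $k^{\#}_i-k^{\#}_j=c_0$) when computing $\gamma_{\mathcal{O}}$.
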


\subsection{Vectors of height $2$ of the canonical basis}
\label{sec:height2}

Now, we turn to the Leclerc-Miyachi constructible characters for $G(d,1,2)$ and we use the notation of Section \ref{sec:lm_constructible}. Fix $\mathbf{r}=(r_1,r_2,\ldots,r_d)\in\mathbb{Z}^d$ and we compute the vectors of the canonical basis of height $2$ of $V(\Lambda_{\mathbf{r}})$. We set some notations for the $d$-symbols of height $2$. Let $0=i_0<i_1<\cdots<i_p=d$ such that for all $1 \leq k \leq p-1$ we have $r_{i_{k+1}}<r_{i_k}$ and for all $1 \leq k \leq p$ and $i_{k-1}< i \leq i_k$ we have $r_i=r_{i_k}$. By convention, we let $r_{0}=-\infty$ and $r_{d+1}=+\infty$. The $d$-symbols $S=(\beta_i)_{1\leq i \leq d}$ of height $2$ are the following:
\begin{itemize}
\item for $1 \leq i < j \leq d$, the $d$-symbol $S_{i,j}$ with $\beta_{i,r_i}=r_i+1$, $\beta_{j,r_j}=r_j+1$ and $\beta_{k,l}=l$ for all other values of $k$ and $l$,
\item for $1\leq i \leq d$, the $d$-symbol $S_{i}$ with $\beta_{i,r_i}=r_i+2$ and $\beta_{k,l}=l$ for all other values of $k$ and $l$,
\item for $1\leq i \leq d$, the $d$-symbol $S'_{i}$ with $\beta_{i,r_i}=r_i+1$, $\beta_{i,r_i-1}=r_i$ and $\beta_{k,l}=l$ for all other values of $k$ and $l$.
\end{itemize}
Among these symbols, the following are standard:
\begin{itemize}
\item for $1\leq k < l \leq p$ the $d$-symbol $S_{i_k,i_l}$ is standard,
\item for $1\leq k \leq p$ such that $i_k-i_{k-1}\geq 2$ the $d$-symbol $S_{i_k-1,i_k}$ is standard,
\item for $1\leq k \leq p$ the $d$-symbol $S_{i_k}$ is standard,
\item for $1\leq k \leq p$ such that $r_{i_k}-r_{i_{k+1}}\geq 2$ the $d$-symbol $S'_{i_k}$ is standard.
\end{itemize}
  
For these standard $d$-symbols, we now apply the algorithm of \cite{leclerc-toffin} and show that the  element $A_{\Sigma}$ of the intermediate basis already satisfies $A_\Sigma \equiv v_{\Sigma} \mod qF_{R}(\Lambda_{\mathbf{r}})$.

We denote by $\tilde{S}_i$ the symbol of height $1$ with $\beta_{i,r_i} = i+1$ and $\beta_{k,l}=l$ for all other values of $k,l$. If $1 \leq k \leq p$, we have
\begin{equation}
  F_{i_k}v_{S^0} = \sum_{i_{k-1}<i \leq i_k}q^{i_k-i}v_{\tilde{S}_i}.\label{eq:h1}\tag{$H_1$}  
\end{equation}

Let $1 \leq k < l \leq p$ and consider $\Sigma=S_{i_k,i_l}$. We obtain $A_{\Sigma}=F_{i_l}F_{i_k}v_{S^0}$. Then for $i_{k-1} < i \leq i_k$, the value of $F_{i_l}v_{\tilde{S}_i}$ depends on $r_{i_k}-r_{i_l}$
\[
  F_{i_l}v_{\tilde{S}_i}=
  \begin{dcases*}
    q^{i_{l-1}-i_l+1}v_{S'_i} + \sum_{i_{l-1}<j\leq i_l}q^{i_l-j}v_{S_{i,j}} & if $r_{i_k}=r_{i_l}+1$,\\
    \sum_{i_{l-1}<j\leq i_l}q^{i_l-j}v_{S_{i,j}}& otherwise.
  \end{dcases*}
\]
From the above formula and \eqref{eq:h1}, one obtains
\[
  A_{\Sigma}=
  \begin{dcases*}
    \sum_{i_{k-1}<i \leq i_k}q^{i_k-i}\left(q^{i_{l-1}-i_l+1}v_{S'_i} + \sum_{i_{l-1}<j\leq i_l}q^{i_l-j}v_{S_{i,j}}\right) & if $r_{i_k}=r_{i_l}+1$,\\
    \sum_{i_{k-1}<i \leq i_k}\sum_{i_{l-1}<j\leq i_l}q^{i_k-i+i_l-j}v_{S_{i,j}}& otherwise,
  \end{dcases*}
\]
and it is readily checked that $A_\Sigma \equiv v_{\Sigma} \mod qF_{R}(\Lambda_{\mathbf{r}})$. Note that 
if $r_{i_k}=r_{i_l}+1$ then $k=l+1$.

Let $1\leq k \leq p$ such that $i_k-i_{k-1}\geq 2$ and consider $\Sigma=S_{i_k-1,i_k}$. We obtain $(q+q^{-1})A_{\Sigma}=F_{i_k}^2v_{S^0}$. Then for $i_{k-1} < i \leq i_k$, we have 
\[
  F_{i_k}v_{\tilde{S}_i} = \sum_{i_{k-1}<j<i}q^{i_k-1-2}v_{S_{i,j}}+\sum_{i<j\leq i_k}q^{i_k-i}v_{S_{j,i}},
\]
so that from the above formula and \eqref{eq:h1}, one obtains
\[
  F_{i_k}^2v_{S^0} = (q+q^{-1})\sum_{i_{k-1}< i < j \leq i_k}q^{2i_k-i-j-1}v_{S_{i,j}}.
\]
It is readily checked that $A_\Sigma \equiv v_{\Sigma} \mod qF_{R}(\Lambda_{\mathbf{r}})$.

Let $1 \leq k \leq p$ and consider $\Sigma=S_{i_k}$. We obtain $A_{\Sigma} = F_{i_k+1}F_{i_k}v_{S^0}$. Then for $i_{k-1} < i \leq i_k$, the value of $F_{i_k+1}v_{\tilde{S}_i}$ depends on $r_{i_{k-1}}-r_{i_k}$:
\[
  F_{i_k+1}v_{\tilde{S}_i}=
  \begin{dcases*}
    v_{S_i}+\sum_{i_{k-2}<j\leq i_{k-1}}q^{i_{k-1}-j+1}v_{S_{i,j}} & if $r_{i_{k-1}}=r_{i_k}+1$,\\
    v_{S_{i}}& otherwise.
  \end{dcases*}
\]
Hence
\[
  A_{\Sigma}=
  \begin{dcases*}
    \sum_{i_{k-1}<i\leq i_k}q^{i_k-i}\left(v_{S_i}+\sum_{i_{k-2}<j\leq i_{k-1}}q^{i_{k-1}-j+1}v_{S_{i,j}}\right) & if $r_{i_{k-1}}=r_{i_k}+1$,\\
    \sum_{i_{k-1}<i\leq i_k}q^{i_k-i}v_{S_{i}}& otherwise,
  \end{dcases*}
\]
and we indeed have $A_\Sigma \equiv v_{\Sigma} \mod qF_{R}(\Lambda_{\mathbf{r}})$.

Finally, let $1\leq k \leq p$ such that $r_{i_k}-r_{i_{k+1}}\geq 2$ and consider $\Sigma = S'_{i_k}$. We obtain $A_{\Sigma}=F_{i_k-1}F_{i_k}v_{S^0}$. Then for $i_{k-1} < i \leq i_k$, we have
\[
  F_{i_k-1}v_{\tilde{S}_i} = v_{S'_i},
\]
since $r_{i_k}-r_{i_{k+1}}\geq 2$. Therefore
\[
  A_{\Sigma} = \sum_{i_{k-1} < i \leq i_k}q^{i_k-i}v_{S'_i},
\]
and $A_{\Sigma}\equiv v_{\Sigma} \mod qF_R(\Lambda_{\mathbf{r}})$.

From this, we obtain the Leclerc-Miyachi $\mathbf{r}$-constructible characters. The bijections between $d$-symbols of height $2$ and irreducible characters of $G(d,1,2)$ is given by
\[
  S_{i,j}\leftrightarrow \chi_{i,j},\quad S_i\leftrightarrow \chi_i\quad\text{and}\quad S'_i\leftrightarrow \chi'_i.
\]

\begin{prop}
  \label{prop:lm-cell-d12}
  For $1 \leq k < l \leq p$, the Leclerc-Miyachi $\mathbf{r}$-constructible character corresponding to the standard $d$-symbol $S_{i_k,i_l}$ is
  \[
    \gamma_{S_{i_k,i_l}}=
    \begin{dcases*}
      \sum_{i_{k-1}< i \leq i_k}\left(\chi'_i + \sum_{i_{l-1}<j \leq i_l}\chi_{i,j}\right) & if $r_{i_k}=r_{i_l}+1$,\\
      \sum_{i_{k-1}< i \leq i_k}\sum_{i_{l-1}<j \leq i_l}\chi_{i,j}& \text{otherwise}.
    \end{dcases*}
  \]

  For $1 \leq k \leq p$ such that $i_k-i_{k-1}\geq 2$, the Leclerc-Miyachi $\mathbf{r}$-constructible character corresponding to the standard $d$-symbol $S_{i_k-1,i_k}$ is
  \[
    \gamma_{S_{i_k-1,i_k}} = \sum_{i_{k-1}< i < j \leq i_k}\chi_{i,j}.
  \]

   For $1 \leq k \leq p$, the Leclerc-Miyachi $\mathbf{r}$-constructible character corresponding to the standard $d$-symbol $S_{i_k}$ is
  \[
    \gamma_{S_{i_k}} =
    \begin{dcases*}
      \sum_{i_{k-1}< i \leq i_k}\left(\chi_{i}+\sum_{i_{k-2}< j \leq i_{k-1}}\chi_{j,i}\right) & if $r_{i_{k-1}}=r_{i_k}+1$,\\
      \sum_{i_{k-1}<i\leq i_k}\chi_{i}& otherwise.
    \end{dcases*}
  \]

   For $1 \leq k \leq p$ such that $r_{i_k}-r_{i_{k+1}}\geq 2$, the Leclerc-Miyachi $\mathbf{r}$-constructible character corresponding to the standard $d$-symbol $S'_{i_k}$ is
  \[
    \gamma_{S'_{i_k}} = \sum_{i_{k-1} < i \leq i_k}\chi_i'.
  \]
\end{prop}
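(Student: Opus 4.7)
The plan is to extract the proposition directly from the explicit computations carried out in Section \ref{sec:height2}. For each of the four families of standard $d$-symbols of height $2$ (namely $S_{i_k,i_l}$, $S_{i_k-1,i_k}$, $S_{i_k}$ and $S'_{i_k}$), the author has already computed the element $A_\Sigma$ of the Leclerc--Toffin intermediate basis as an explicit $\mathbb{Z}[q,q^{-1}]$-linear combination of the standard basis vectors $v_S$, and verified in each case that $A_\Sigma \equiv v_\Sigma \mod qF_R(\Lambda_{\mathbf{r}})$. Since by construction $\overline{A_\Sigma}=A_\Sigma$, the defining characterization of the canonical basis gives $b_\Sigma = A_\Sigma$ for every such standard $d$-symbol $\Sigma$ of height $2$.

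With these explicit expansions in hand, the proof reduces to specializing $q=1$ in the coefficients $a_\Sigma^S(q)$ and translating from $d$-symbols to characters of $G(d,1,2)$ via the bijection $S_{i,j}\leftrightarrow \chi_{i,j}$, $S_i\leftrightarrow \chi_i$, $S'_i\leftrightarrow\chi'_i$. I would go through the four cases in turn. For $\Sigma=S_{i_k,i_l}$ with $k<l$, the computation shows
\[
 b_\Sigma = \sum_{i_{k-1}<i\le i_k}\sum_{i_{l-1}<j\le i_l} q^{i_k-i+i_l-j}\, v_{S_{i,j}}
\]
in general, plus an extra contribution $\sum_{i_{k-1}<i\le i_k} q^{i_k-i+i_{l-1}-i_l+1}\, v_{S'_i}$ when $r_{i_k}=r_{i_l}+1$; setting $q=1$ and using the bijection yields the two-cases formula for $\gamma_{S_{i_k,i_l}}$. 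The other three cases are handled identically: for $S_{i_k-1,i_k}$ one uses $F_{i_k}^2 v_{S^0}=(q+q^{-1})\sum q^{2i_k-i-j-1}v_{S_{i,j}}$; for $S_{i_k}$ one uses the expansion involving $F_{i_k+1}F_{i_k}v_{S^0}$ with its extra terms when $r_{i_{k-1}}=r_{i_k}+1$; and for $S'_{i_k}$ one uses $A_\Sigma=\sum q^{i_k-i}v_{S'_i}$.

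There is essentially no obstacle here: all the nontrivial work, namely running the Leclerc--Toffin algorithm correctly (in particular identifying, in the step computing $A_\Sigma$, the right index on which to apply $F_{\bullet}$ so that the output has leading term $v_\Sigma$ modulo $q$), has already been carried out in Section \ref{sec:height2}. The only thing that still needs to be checked explicitly is that evaluating each coefficient at $q=1$ produces the multiplicity $1$ stated in the proposition and no cancellations occur; this is immediate because every coefficient appearing above is a monomial in $q$ (or a sum of such arising from the factor $q+q^{-1}$), and setting $q=1$ replaces each $q^{a}$ by $1$.
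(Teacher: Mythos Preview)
Your proposal is correct and matches the paper's approach exactly: the proposition is stated immediately after the explicit computations of $A_\Sigma$ in Section~\ref{sec:height2}, and the paper simply says ``From this, we obtain the Leclerc-Miyachi $\mathbf{r}$-constructible characters,'' relying (as you do) on the fact that $A_\Sigma$ is bar-invariant and congruent to $v_\Sigma$ modulo $qF_R(\Lambda_{\mathbf{r}})$, hence equals $b_\Sigma$, so that specializing $q=1$ and applying the bijection $S_{i,j}\leftrightarrow\chi_{i,j}$, $S_i\leftrightarrow\chi_i$, $S'_i\leftrightarrow\chi'_i$ yields the stated formulas.
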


%%%%%%%%%%%%%%%  End of file     %%%%%%%%%%%%

%%% Local Variables:
%%% mode: latex
%%% TeX-master: "../constructibles_d12"
%%% End:

%%%%%%%%%%%%%%%%%%%%%%%%%%%%%%%%%%%%%%%%%%%%%%%
%                 		              %
%     A conjecture relating cellular and      %
%        constructible characters             %
%                 			      %
%%%%%%%%%%%%%%%%%%%%%%%%%%%%%%%%%%%%%%%%%%%%%%%

\section{A conjecture relating cellular and constructible characters}
\label{sec:conj}

We now state precisely the conjecture relating Calogero-Moser $\mathbf{c}$-cellular characters and Leclerc-Miyachi $\mathbf{r}$-constructible characters for the complex reflection group $G(d,1,n)$. Let $c\colon\Ref(G(d,1,n))\rightarrow \mathbb{C}$. We suppose that $c_0\neq 0$ and that for every $1 \leq i \leq d$ we have $k_i \in -\mathbb{N}c_0$. Finally, suppose also that
\[
  \frac{k^{\#}_1}{c_0} \leq \frac{k^{\#}_2}{c_0} \leq \cdots \leq \frac{k^{\#}_d}{c_0}.
\]

\begin{conj}
  \label{conj:cm-lm}
  Let $\mathbf{r} = -c_0^{-1}(k^{\#}_1,k^{\#}_2,\ldots,k^{\#}_d)$. Then the set of Calogero-Moser $\mathbf{c}$-cellular characters and the set of Leclerc-Miyachi $\mathbf{r}$-constructible characters coincide.
\end{conj}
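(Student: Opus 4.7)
The plan is to establish this conjecture, as stated in Theorem~\ref{thm:proof-conj}, in two distinct regimes flagged in the introduction: first the asymptotic case for general $G(d,1,n)$, where the matching reduces to a counting argument on irreducible characters, and second the rank two case $G(d,1,2)$, which follows from a direct comparison of the explicit lists in Propositions~\ref{prop:cm-cell-d12-c0<>0} and~\ref{prop:lm-cell-d12}.

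For the asymptotic regime, I would translate the hypothesis $r_{i}-r_{i+1}\geq n$ through the parameter identification $r_i=-k^{\#}_i/c_0$. Up to reindexing, the scalars $k^{\#}_i$ for $1\leq i\leq d$ agree with the $k_p$ appearing in Corollary~\ref{cor:cm-generic}, and the inequality $|r_i-r_j|\geq n$ for $i\neq j$ translates to $(k_p-k_q)-c_0 j\neq 0$ for every $p\neq q$ and every $-n<j<n$. Combined with $c_0\neq 0$, Corollary~\ref{cor:cm-generic} yields the irreducibility of every Calogero-Moser $\mathbf{c}$-cellular character. On the other side, Corollary~\ref{cor:lm-asymptotic} gives the irreducibility of every Leclerc-Miyachi $\mathbf{r}$-constructible character, while the lemma preceding Theorem~\ref{thm:lm-asymptotic} ensures that the standard $d$-symbols of height $n$ are in bijection with all $d$-partitions of $n$. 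Both sets then coincide with the full set $\Irr(G(d,1,n))$.

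For $G(d,1,2)$, I would fix the decomposition $0=i_0<i_1<\cdots<i_p=d$ of the parameter $\mathbf{r}$ used in Section~\ref{sec:height2}, so that the $\sim$-classes on the Calogero-Moser side are precisely the blocks $\mathcal{O}_k=\{i_{k-1}+1,\ldots,i_k\}$. The key observation is that for $i\in\mathcal{O}_k$ and $j\in\mathcal{O}_l$, one has $k^{\#}_i-k^{\#}_j=\pm c_0$ if and only if $|k-l|=1$ and the neighbouring blocks satisfy $r_{i_k}-r_{i_l}=\mp 1$. Under this dictionary I would match term by term: $\gamma_{\mathcal{O}_k}$ with $\gamma_{S_{i_k}}$ in both subcases $r_{i_{k-1}}-r_{i_k}=1$ and $r_{i_{k-1}}-r_{i_k}\geq 2$; $\gamma'_{\mathcal{O}_k}$ with $\gamma_{S'_{i_k}}$ when $r_{i_k}-r_{i_{k+1}}\geq 2$ and with $\gamma_{S_{i_k,i_{k+1}}}$ when $r_{i_k}-r_{i_{k+1}}=1$; $\gamma_{\mathcal{O}_k,\mathcal{O}_l}$ with $\gamma_{S_{i_k,i_l}}$ in the generic subcase $c_0^2\neq (k^{\#}_i-k^{\#}_j)^2$; and finally the single LM character $\gamma_{S_{i_k-1,i_k}}$ with the class $\gamma_{\mathcal{O}_k,\mathcal{O}_k}$, together with $\gamma^{\pm}_{\mathcal{O}_k,\mathcal{O}_k}$ when $d$ is even and $|\mathcal{O}_k|\geq 2$.

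The main obstacle will be the bookkeeping in the rank two step: when $d$ is even and a block has size at least two, the two-dimensional module $\mathcal{L}_{i,j}$ splits into $\mathcal{L}^{\pm}_{i,j}$, producing up to three distinct classes of simple $\Gau$-modules with identical cellular character. Since the conjecture compares \emph{sets} (not multisets) of characters, this repetition is permitted, but one must carefully verify that no character appears on one side without appearing on the other. Boundary cases at $k=1$ and $k=p$, where $\mathcal{O}_{k-1}$ or $\mathcal{O}_{k+1}$ may be absent, also require the same convention $r_0=-\infty$, $r_{d+1}=+\infty$ adopted in Section~\ref{sec:height2} to make the matching uniform.
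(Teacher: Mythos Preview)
Your proposal is correct and follows essentially the same approach as the paper's proof of Theorem~\ref{thm:proof-conj}: the asymptotic case by combining Corollaries~\ref{cor:cm-generic} and~\ref{cor:lm-asymptotic} via the parameter translation, and the $G(d,1,2)$ case by identifying the $\sim$-classes with the blocks $\mathcal{O}_k=\{i_{k-1}+1,\ldots,i_k\}$ and matching the explicit lists from Propositions~\ref{prop:cm-cell-d12-c0<>0} and~\ref{prop:lm-cell-d12} term by term. Your bookkeeping is in fact slightly more explicit than the paper's, particularly in handling the set-versus-multiset issue for the repeated cellular characters $\gamma_{\mathcal{O}_k,\mathcal{O}_k}=\gamma^{\pm}_{\mathcal{O}_k,\mathcal{O}_k}$ and the boundary conventions at $k=1,p$.
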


If $d=2$ this conjecture is equivalent to the conjecture that Calogero-Moser $\mathbf{c}$-cellular characters for the Weyl group of type $B_n$ are Lusztig's constructible characters obtained via truncated induction \cite[Chapter 22]{lusztig-unequal}.

\begin{rem}
  If we start from a $d$-tuple $\mathbf{r}=(r_1,\ldots,r_d)$, one can choose a corresponding parameter $\mathbf{c}$ by $c_0\neq 0$ and $k^{\#}_i=-c_0r_i$.
\end{rem}

\begin{thm}
  \label{thm:proof-conj}
  If the parameter $\mathbf{c}$ is generic in the sense of Corollary \ref{cor:cm-generic} then Conjecture \ref{conj:cm-lm} is true.

  For the group $G(d,1,2)$ the Conjecture \ref{conj:cm-lm} is true for any $\mathbf{c}$.
\end{thm}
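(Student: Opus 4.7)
The plan is to treat the two assertions independently using the material developed in the previous sections.

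For the generic case, the strategy is to show that both sets of characters coincide with $\Irr(G(d,1,n))$. Corollary \ref{cor:cm-generic} already gives the irreducibility of the Calogero-Moser $\mathbf{c}$-cellular characters, so it suffices to verify the asymptotic condition $r_i - r_{i+1} \geq n$ of Corollary \ref{cor:lm-asymptotic}. Under the conjectural hypotheses, $r_i = -k_i^{\#}/c_0$ is a non-negative integer and $r_1 \geq \cdots \geq r_d$; the genericity inequalities $k_p - k_q \neq c_0 j$ for $p \neq q$ and $-n<j<n$ translate (via the relabeling $k_i = k_{1-i}^{\#}$) into $|r_p - r_q| \geq n$ whenever $p \neq q$, the case $j=0$ forcing distinctness and the others precluding any gap strictly between $0$ and $n$. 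Combined with the monotonicity this yields $r_i - r_{i+1} \geq n$, so both sides enumerate $\Irr(G(d,1,n))$ and in particular coincide.

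For $G(d,1,2)$, the strategy is a direct term-by-term comparison of Propositions \ref{prop:cm-cell-d12-c0<>0} and \ref{prop:lm-cell-d12} under the dictionary $r_i = -k_i^{\#}/c_0$. The equivalence classes $\mathcal{O}_1, \ldots, \mathcal{O}_p$ of $i \sim j \iff k_i^{\#} = k_j^{\#}$ are precisely the blocks $\{i_{k-1}+1, \ldots, i_k\}$ on which $r$ is constant, and the boundary conditions correspond: the existence of $\mathcal{O}'$ with $k_i^{\#} - k_j^{\#} = \pm c_0$ amounts to two consecutive blocks whose $r$-values differ by $1$. The matching then reads $\gamma_{\mathcal{O}_k} \leftrightarrow \gamma_{S_{i_k}}$ (the extra $\chi_{j,i}$-terms appearing exactly when $r_{i_{k-1}}=r_{i_k}+1$); $\gamma'_{\mathcal{O}_k} \leftrightarrow \gamma_{S'_{i_k}}$ when $r_{i_k}-r_{i_{k+1}} \geq 2$, while in the boundary case $r_{i_k}-r_{i_{k+1}}=1$ both $\gamma_{\mathcal{O}_k,\mathcal{O}_{k+1}}$ and $\gamma_{S'_{i_k}}$ cease to exist and $\gamma'_{\mathcal{O}_k}$ is absorbed into $\gamma_{S_{i_k,i_{k+1}}}$; $\gamma_{\mathcal{O}_k,\mathcal{O}_l} \leftrightarrow \gamma_{S_{i_k,i_l}}$ for the remaining pairs $k<l$; and $\gamma_{\mathcal{O}_k,\mathcal{O}_k} \leftrightarrow \gamma_{S_{i_k-1,i_k}}$ when $|\mathcal{O}_k|\geq 2$. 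When $d$ is even the two CM classes $\mathcal{L}^{\pm}_{\mathcal{O},\mathcal{O}}$ yield the same cellular character $\gamma_{\mathcal{O},\mathcal{O}}$, so no duplication arises on the CM side.

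The key non-routine step is the translation dictionary in the second case: the algebraic condition $c_0^2 = (k_i^{\#} - k_j^{\#})^2$ that controls when the two-dimensional module $\mathcal{L}_{i,j}$ splits as $\mathcal{L}_i \oplus \mathcal{L}'_j$ (or $\mathcal{L}'_i \oplus \mathcal{L}_j$) must match precisely the combinatorial condition on the $r$-values deciding whether $S'_{i_k}$ or $S_{i_k,i_{k+1}}$ is standard. Once this correspondence between the CM degenerations and the LM standardness conditions is established, the remainder is a routine but multi-case verification of equal character formulas.
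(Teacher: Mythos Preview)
Your proposal is correct and follows essentially the same approach as the paper: for the generic case you reduce both sides to the irreducible characters via Corollary~\ref{cor:cm-generic} and Corollary~\ref{cor:lm-asymptotic} after translating the genericity condition on $\mathbf{c}$ into the asymptotic condition $r_i-r_{i+1}\geq n$, and for $G(d,1,2)$ you perform the same case-by-case matching of Propositions~\ref{prop:cm-cell-d12-c0<>0} and~\ref{prop:lm-cell-d12} under the dictionary $r_i=-k_i^{\#}/c_0$, with the same identification of the equivalence classes $\mathcal{O}_k$ with the blocks $\{i_{k-1}+1,\ldots,i_k\}$. Your treatment is in fact somewhat more explicit than the paper's on the parameter translation and on the boundary case $r_{i_k}-r_{i_{k+1}}=1$, but the underlying argument is identical.
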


\begin{proof}
  With the change of parameters between $\mathbf{c}$ and $\mathbf{r}$, the generic case for the $\mathbf{c}$-cellular characters translates into the asymptotic case for the constructible characters. The result therefore follows from Corollary \ref{cor:cm-generic} and Corollary \ref{cor:lm-asymptotic}.

  For $G(d,1,2)$, we describe the equivalence relation $\sim$ introduced in Section \ref{sec:c0<>0}. Using the notation $(i_j)_{-1\leq j \leq p+1}$ introduced in Section \ref{sec:height2}, the equivalence classes of $\sim$ are the sets $\mathcal{O}_j=\{i_{j-1}+1,i_{j-1}+2,\ldots,i_j\}$ for $1 \leq j \leq p$.

  Using the explicit descriptions of the $\mathbf{c}$-cellular characters given in Proposition \ref{prop:cm-cell-d12-c0<>0} and of the $\mathbf{r}$-constructible characters given in Proposition \ref{prop:lm-cell-d12} we check that:
  \begin{itemize}
  \item for any $1\leq k < l\leq p$, the Leclerc-Miyachi $\mathbf{r}$-constructible character $\gamma_{S_{i_k,i_l}}$ is equal to the Calogero-Moser $\mathbf{c}$-cellular character $\gamma'_{\mathcal{O}_k}$ if $r_{i_k}=r_{i_l}+1$ and to the Calogero-Moser $\mathbf{c}$-cellular character $\gamma_{\mathcal{O}_k,\mathcal{O}_l}$ otherwise,
  \item for any $1\leq k \leq p$ such that $i_k - i_{k-1}\geq 2$, the Leclerc-Miyachi $\mathbf{r}$-constructible character $\gamma_{S_{i_k-1,i_k}}$ is equal to the Calogero-Moser $\mathbf{c}$-cellular character $\gamma_{\mathcal{O}_k,\mathcal{O}_k}$,
  \item for any $1\leq k < l\leq p$, the Leclerc-Miyachi $\mathbf{r}$-constructible character $\gamma_{S_{i_k}}$ is equal to the Calogero-Moser $\mathbf{c}$-cellular character $\gamma_{\mathcal{O}_k}$,
  \item for any $1\leq k < l\leq p$ such that $i_k-i_{k-1}\geq 2$, the Leclerc-Miyachi $\mathbf{r}$-constructible character $\gamma_{S'_{i_k}}$ is equal to the Calogero-Moser $\mathbf{c}$-cellular character $\gamma'_{\mathcal{O}_k}$.
  \end{itemize}
  It is easy to check that every Calogero-Moser $\mathbf{c}$-cellular character appears as a Leclerc-Miyachi $\mathbf{r}$-constructible character.
\end{proof}

%%%%%%%%%%%%%%%%	End of file	%%%%%%%%%%%%%

%%% Local Variables:
%%% mode: latex
%%% TeX-master: "../constructibles_d12"
%%% End:

\appendix
%%%%%%%%%%%%%%%%%%%%%%%%%%%%%%%%%%%%%%%%%%%%%%%
%                 		              %
%    Appendix on constructible characters     %
%                 			      %
%%%%%%%%%%%%%%%%%%%%%%%%%%%%%%%%%%%%%%%%%%%%%%%

\section{Cellular characters}
\label{sec:appendix}

This appendix aims to define a general notion of cellular characters of a commutative algebra $A$, and is largely inspired from \cite[Appendix II]{bonnafe-thiel}. We fix $\Bbbk$ a field of characteristic $0$, $E$ a finite dimensional $\Bbbk$-vector space, $A$ a split subalgebra of $\End_{\Bbbk}(E)$ and $P$ an integral and integrally closed subalgebra with fraction field $K$.

Given $R$ a commutative $\Bbbk$-algebra, we denote by $RE$ (resp. $RA$) the extension of scalars $R\otimes_{\Bbbk} E$ (resp. $R\otimes_{\Bbbk} A$). Let $D_1,\ldots,D_n$ be some pairwise commuting elements of $\End_{PA}(PE)$. If $\mathfrak{p}$ is a prime ideal of $P$, we denote by $K_p(\mathfrak{p})$ the residue field at $\mathfrak{p}$ and by $D_i(\mathfrak{p})$ the image of $D_i$ in $\End_{P/\mathfrak{p}A}(P/\mathfrak{p}E)$. Finally, let $D=(D_1,\ldots,D_n)$ and $P[D]$ be the subalgebra of $\End_{PA}(PE)$ generated by $D_1,\ldots,D_n$.

We are interested in the decomposition of the vector space $KE$ as a $K[D]\otimes_{K} KA$-module, and more precisely of its class in the Grothendieck group $K_0(K[D]\otimes_{K} KA)$ of finite dimensional $K[D]\otimes_{K} KA$-modules.

Since the algebra $A$ is split, we obtain (\emph{cf.} \cite[Propositions 3.56 and 7.7]{curtis-reiner}) a bijection $\Irr(K[D])\times \Irr(KA) \rightarrow \Irr(K[D]\otimes_{K} KA)$ given by tensoring modules. This bijection induces an isomorphism of $\mathbb{Z}$-modules $K_0(K[D])\otimes_{\mathbb{Z}}K_0(KA)\rightarrow K_0(K[D]\otimes_{K} KA)$. We therefore decompose $[KE]$ in $K_0(K[D])\otimes_{\mathbb{Z}}K_0(KA)$ as follows:
\[
  [KE] = \sum_{L\in\Irr(K[D])}[L]\otimes \gamma_L^{P[D]},
\]
with $\gamma_L^{P[D]}\in K_0(KA)$. Since $A$ is split, we usually think of $\gamma_L^{P[D]}$ as an element of $K_0(A)$.

\begin{defn}
  \label{def:cellular}
  The set of cellular characters for $P[D]$ is the set of $\gamma_L^{P[D]}\in K_0(A)$ for $L$ running over the set of irreducible $K[D]$-modules.
\end{defn}

\begin{rem}
  It may happen that $\gamma_L^{P[D]} = \gamma_{L'}^{P[D]}$ for two non-isomorphic $K[D]$-modules.
\end{rem}

By extending the scalars to $P[\mathbf{X}]=P[X_1,\ldots,X_n]$ and by setting $\mathcal{D}=X_1D_1+\cdots + X_nD_n$, it is shown in \cite{bonnafe-thiel} that the set of cellular characters for $P[D]$ coincides with the set of cellular characters for $P[\mathbf{X}][\mathcal{D}]$. We therefore may and will suppose that $n=1$ and set $D=D_1$.

It is now easy to describe all the irreducible $K[D]$-modules. Denote by $\Pi$ the characteristic polynomial of $D$, which is a unital polynomial in $P[\mathbf{t}]$. We then decompose $\Pi$ into a product of irreducible unital polynomials in $K[\mathbf{t}]$
\[
  \Pi = \Pi_1^{n_1}\cdots \Pi_r^{n_r}.
\]
We also denote by $\Pi^{\mathrm{sem}}$ the product of the $\Pi_i$'s without multiplicity. Since $P$ is integrally closed, the polynomials $\Pi_i$ and $\Pi^{\mathrm{sem}}$ have their coefficients in $P$ and we set $\mathcal{L}_i = P[\mathbf{t}]/\langle\Pi_i\rangle$. The set of irreducible $K[D]$-modules are therefore the extensions to $K$ of the $P[D]$-modules $\mathcal{L}_i$:
\[
  \Irr(K[D]) = \{K\mathcal{L}_1,\ldots,K\mathcal{L}_r\}.
\]
The main advantage of working over $P$ is that we can easily reduce modulo a prime ideal $\mathfrak{p}$ of $P$. Let $\Delta$ be the discriminant of the polynomial $\Pi^{\mathrm{sem}}$, and denote by $\Delta(\mathfrak{p})$ its reduction modulo a prime ideal $\mathfrak{p}$ of $P$.

\begin{prop}
  \label{prop:sum_cell}
  Let $\mathfrak{p}$ be a prime ideal of $P$ such that $P/\mathfrak{p}$ is integrally closed. Then the cellular characters for $P/\mathfrak{p}[D(\mathfrak{p})]$ are sums of cellular characters for $P[D]$. If moreover $\Delta(\mathfrak{p}) \neq 0$ then the sets of cellular characters for $P/\mathfrak{p}[D(\mathfrak{p})]$ and for $P[D]$ coincide.
\end{prop}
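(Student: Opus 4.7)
As noted in the paragraph preceding the statement, we may assume $n=1$. Write the factorisations $\Pi=\prod_i\Pi_i^{n_i}$ over $K$ and $\bar\Pi=\prod_j Q_j^{m_j}$ over $\bar K=\mathrm{Frac}(\bar P)$, together with the further factorisation $\bar\Pi_i=\prod_j Q_j^{b_{ij}}$ in $\bar K[\mathbf t]$. Since $\bar P$ is integrally closed we have $Q_j\in\bar P[\mathbf t]$, so $\mathcal N_j=\bar P[\mathbf t]/\langle Q_j\rangle$ is defined and the irreducible $\bar K[D(\mathfrak p)]$-modules are the $\bar K\mathcal N_j$. Moreover, by the Chinese Remainder Theorem applied to the pairwise coprime $Q_j^{b_{ij}}$, the module $\bar K\mathcal L_i=\bar K[\mathbf t]/\langle\bar\Pi_i\rangle$ admits a composition series in which $\bar K\mathcal N_j$ appears with multiplicity $b_{ij}$, so $[\bar K\mathcal L_i]=\sum_j b_{ij}[\bar K\mathcal N_j]$ in $K_0(\bar K[D(\mathfrak p)])$.

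The core of the proof is the specialisation identity
\[
  [\bar K\bar E]\;=\;\sum_i [\bar K\mathcal L_i]\otimes \gamma_{K\mathcal L_i}^{P[D]}\quad\text{in}\quad K_0(\bar K[D(\mathfrak p)])\otimes K_0(A),
\]
where we use the splitness of $A$ to identify $K_0(KA)$ and $K_0(\bar K A)$ with $K_0(A)$. To prove it I would lift a composition series of $KE$ as a $K[D]\otimes KA$-module to a $P[D]\otimes PA$-filtration of the $P$-free module $PE$; the successive quotients, which are lattices in the simple subquotients $L_i\otimes V$, can without loss of generality be chosen of product form $\mathcal L_i\otimes_\Bbbk V_\Bbbk$, where $V_\Bbbk$ is a $\Bbbk$-form of $V$ afforded by the splitness of $A$. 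Reducing modulo $\mathfrak p$ and extending to $\bar K$ sends such a product lattice to $\bar K\mathcal L_i\otimes_{\bar K}\bar K V_\Bbbk$, whose class in the Grothendieck group is $[\bar K\mathcal L_i]\otimes[V]$. Summing over the composition series and invoking the standard independence of the resulting class on the choices of lattices yields the displayed identity. Substituting $[\bar K\mathcal L_i]=\sum_j b_{ij}[\bar K\mathcal N_j]$ and comparing with the tautological decomposition $[\bar K\bar E]=\sum_j [\bar K\mathcal N_j]\otimes \gamma_{\bar K\mathcal N_j}^{\bar P[D(\mathfrak p)]}$ produces
\[
  \gamma_{\bar K\mathcal N_j}^{\bar P[D(\mathfrak p)]}\;=\;\sum_i b_{ij}\,\gamma_{K\mathcal L_i}^{P[D]},
\]
which exhibits each cellular character for $\bar P[D(\mathfrak p)]$ as a non-negative integer combination of cellular characters for $P[D]$; this is the first assertion.

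For the second assertion, $\Delta(\mathfrak p)\neq 0$ means that $\bar\Pi^{\mathrm{sem}}=\prod_i\bar\Pi_i$ is separable over $\bar K$, so each $\bar\Pi_i$ is squarefree (giving $b_{ij}\in\{0,1\}$) and the $\bar\Pi_i$ are pairwise coprime (so each $Q_j$ divides at most one $\bar\Pi_i$). Since every $\bar\Pi_i$ is a non-constant monic polynomial, it contains at least one irreducible factor, so the map $j\mapsto i(j)$ defined by $b_{i(j),j}=1$ is well-defined and surjective onto $\{i\}$. The formula above then collapses to $\gamma_{\bar K\mathcal N_j}^{\bar P[D(\mathfrak p)]}=\gamma_{K\mathcal L_{i(j)}}^{P[D]}$, and every $\gamma_{K\mathcal L_i}^{P[D]}$ arises in this way, proving equality of the two sets of cellular characters. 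The principal obstacle is the specialisation identity: the appropriate generality of $P$ and the possible non-semisimplicity of $D(\mathfrak p)$ or $K[D]\otimes KA$ require one to work entirely at the level of Grothendieck groups, exploiting the $P$-freeness of $PE$ and the Brauer-type invariance of reduction under change of lattice.
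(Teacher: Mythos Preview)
Your proof follows the same overall architecture as the paper's: factor each $\bar\Pi_i$ into irreducibles over $\bar K$, establish a specialisation identity for $[\bar K E]$ in terms of the $[\bar K\mathcal L_i]$, substitute, and read off the formula $\gamma_{\bar K\mathcal N_j}=\sum_i b_{ij}\gamma_{K\mathcal L_i}$; the second assertion is then handled identically. The one substantive difference is in how the specialisation identity is obtained. The paper argues that, since $\Bbbk$ has characteristic $0$, equality in $K_0(K[D]\otimes_K KA)$ is equivalent to equality of the associated characters; because $PE$ and each $\mathcal L_i\otimes_\Bbbk V_\Bbbk$ are $P$-free, these characters take values in $P$ on $P[D]\otimes A$, and therefore the identity $[KE]=\sum_i[K\mathcal L_i]\otimes\gamma^{P[D]}_{K\mathcal L_i}$ reduces modulo $\mathfrak p$ directly. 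This bypasses entirely the filtration/lattice argument you sketch.

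Your route via lifting a composition series and invoking Brauer-type invariance is sound in spirit, but carries technical baggage that the paper's character argument avoids: you need the intersected filtration of $PE$ to have well-behaved (e.g.\ $P$-projective) subquotients before reducing, and Brauer invariance in its usual form is cleanest over a DVR, so one should localise at $\mathfrak p$ first. Your phrase ``can without loss of generality be chosen of product form'' conflates two steps: the filtration of $PE$ hands you \emph{some} lattices as subquotients, and it is only after reduction, via the invariance you cite at the end, that you may replace them by the product lattices $\mathcal L_i\otimes_\Bbbk V_\Bbbk$. None of this is fatal, but the paper's observation that characters already live in $P$ and hence specialise for free is both shorter and more robust.
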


\begin{proof}
  We start by decomposing the reduction $\Pi(\mathfrak{p})$ modulo $\mathfrak{p}$ into a product of irreducible polynomials with coefficients in $k_P(\mathfrak{p})$:
  \[
    \Pi_i(\mathfrak{p}) = \prod_{j=1}^{d_i}\pi_{i,j}^{e_{i,j}},
  \]
  where $\pi_{i,j}\in k_P(\mathfrak{p})[\mathbf{t}]$ is unital and irreducible, $e_{i,j}\in \mathbb{Z}_{>0}$ and $\pi_{i,j}\neq \pi_{i,j'}$ for $j\neq j'$. Since $P/\mathfrak{p}$ is integrally closed, the polynomials $\pi_{i,j}$ have their coefficients in $P/\mathfrak{p}$.

  For $1\leq i \leq r$ and $1 \leq j \leq d_i$, we denote by $\mathcal{L}_{i,j}$ the $P/\mathfrak{p}[D(\mathfrak{p})]$-module $(P/\mathfrak{p})[\mathbf{t}]/\langle\pi_{i,j}\rangle$. In the Grothendieck group of $K_P(\mathfrak{p})[D(\mathfrak{p})]$ we therefore have the following equality
  \begin{equation}
    [k_P(\mathfrak{p})\mathcal{L}_{i}] = \sum_{j=1}^{d_i}e_{i,j}[k_P(\mathfrak{p})\mathcal{L}_{i,j}].\label{eq:red_Li}
  \end{equation}

  Since $\Bbbk$ has characteristic $0$, an equality between elements of $K_0(K[D]\otimes_{K}KA)$ is equivalent to an equality between the corresponding characters, so that we can specialize modulo $\mathfrak{p}$ the equality
  \[
    [KE] = \sum_{i=1}^r[K\mathcal{L}_i]\otimes \gamma_{K\mathcal{L}_i}^{P[D]}
  \]
  into
  \[
    [k_P(\mathfrak{p})E] = \sum_{i=1}^r[k_P(\mathfrak{p})\mathcal{L}_i]\otimes \gamma_{K\mathcal{L}_i}^{P[D]}.
  \]
  Using \eqref{eq:red_Li}, we see that the cellular characters for $P/\mathfrak{p}[D(\mathfrak{p})]$ are sums of cellular characters for $P[D]$.

  If moreover $\Delta(\mathfrak{p})\neq 0$, then $e_{i,j}=1$ for all $1\leq i \leq r$ and $1 \leq j \leq d_i$ and $\pi_{i,j}=\pi_{l,m}$ if and only if $(i,j) = (l,m)$. Then
  \[
    [k_P(\mathfrak{p})E] = \sum_{i=1}^r\sum_{j=1}^{d_i}[k_P(\mathfrak{p})\mathcal{L}_{i,j}]\otimes \gamma_{i}^{P[D]}
  \]
  and the set $\left\{k_P(\mathfrak{p})\mathcal{L}_{i,j}\ \middle\vert\ 1\leq i \leq r, 1 \leq j \leq d_i\right\}$ is exactly the set of irreducible representations of $k_P(\mathfrak{p})[D(\mathfrak{p})]$, so that $\gamma_{k_P(\mathfrak{p})\mathcal{L}_{i,j}}^{P/\mathfrak{p}[D(\mathfrak{p})]}=\gamma_{K\mathcal{L}_i}^{P[D]}$ for all $1 \leq i \leq r$ and $1 \leq j \leq d_i$.
\end{proof}

%%%%%%%%%%%%%%%%    End of file	%%%%%%%%%%%%%

%%% Local Variables:
%%% mode: latex
%%% TeX-master: "../constructibles_d12"
%%% End:

%%%%%%%%%%%%%%%%%%%%%%
%    Bibliography    %
%%%%%%%%%%%%%%%%%%%%%%

%%%%%%%%%%%%%%%%%%%%%%%%%%%%%%%%%%%%
%                 					  				  		 %
%	Bibliography   				 					 %
%                 					  						 %
%%%%%%%%%%%%%%%%%%%%%%%%%%%%%%%%%%%%

\bibliographystyle{habbrv}
\bibliography{biblio}

%%%%%%%%%%%%%%%%	End of file	%%%%%%%%%%%%%

\end{document}